\titleformat{\section}[hang]{\scshape\center}{\thesection.}{3pt}{} 
\titleformat{\subsection}[runin]{\bf\scshape}{\thesubsection.}{0pt}{}
\theoremstyle{plain}
\newtheorem{theorem}{Theorem}[subsection]
\newtheorem{lemma}[theorem]{Lemma}
\newtheorem{proposition}[theorem]{Proposition}
\theoremstyle{definition}
\newcommand{\lie}{\mathfrak{g}}
\newcommand{\lieh}{\mathfrak{h}}
\newcommand{\ol}{\overline}
\newcommand{\CC}{\mathbb{C}}
\newcommand{\ZZ}{\mathbb{Z}}
\begin{document}

\title{Deformed Poisson W-algebras of Type A}
\author{Lachlan Walker}
\address{Department of Mathematics\\
University of York \\
Heslington \\ 
York \\ 
YO10 5DD \\
United Kingdom}
\email{lachlan.walker@cantab.net}

\begin{abstract}
For the algebraic group $SL_{l+1}(\mathbb{C})$ we describe a system of positive roots associated to conjugacy classes in its Weyl group. Using this we explicitly describe the algebra of regular functions on certain transverse slices to conjugacy classes in $SL_{l+1}(\mathbb{C})$ as a polynomial algebra of invariants. These may be viewed as an algebraic group analogue of certain graded parabolic invariants that generate the (graded) W-algebra in type A.
\end{abstract}

\maketitle

\section{Introduction}
\subsection{}
Let $G$ be a complex semisimple connected algebraic group with Lie algebra $\lie$. The Jacobson-Morozov theorem states that any non-zero nilpotent element $e \in \lie$ can be embedded into an $\mathfrak{sl}_{2}$-triple $(e,f,h)$. Fix such a triple and let $\mathfrak{z}(f)$ denote the centraliser of $f$ in $\lie$. The affine space $\mathcal{S}_{e} = e+ \mathfrak{z}(f)$ is called the Slodowy slice and is transverse to the adjoint orbit in $\lie$ containing $e$ at the point $e$. They are named after P. Slodowy who studied the singularities of the adjoint quotient map $\delta_{\lie} : \lie \rightarrow \mathfrak{h}/W$ induced from the inclusion $\mathbb{C}[\lieh]^{W} \rightarrow \mathbb{C}[\lie]$, where $\mathbb{C}[\lieh]^{W} \simeq \mathbb{C}[\lie]^{G}$, by restricting $\delta_{\lie}$ to the spaces $\mathcal{S}_{e}$; see \cite{Slo}. To the nilpotent element $e$ one can construct an associative algebra $U(\lie,e)$ called the W-algebra of the pair $(\lie,e)$; see for example \cite{BK}, \cite{GanGin}, \cite{Wang}. Using a special filtration of $U(\lie,e)$, called the Kazhdan filtration, it was shown in \cite{GanGin} that there exists an isomorphism of graded Poisson algebras
$$
\text{gr}U(\lie,e) \simeq \mathbb{C}[\mathcal{S}_{e}]
$$
where we view $\mathcal{S}_{e} \subseteq \lie^{*}$ using the isomorphism $\lie \simeq \lie^{*}$ induced by the Killing form. \\
The Poisson structure on $\mathcal{S}_{e}$ arises naturally by means of a Hamiltonian reduction of the Poisson manifold $\lie^{*}$ by the action of a unipotent subgroup $M \subseteq G$. Indeed, the map $\text{ad}h : \lie \rightarrow \lie$ gives a decomposition
$$
\lie = \bigoplus_{j \in \mathbb{Z}} \lie(j)
$$
of $\lie$ into eigenspaces $\lie(j) = \{ x \in \lie : \text{ad}h(x) = jx \}$. Use the Killing form to identify the element $e \in \lie$ with $\chi \in \lie^{*}$. The subspace $\lie(-1)$ can be equipped with a non-degenerate, skew-symmetric bilinear form $\omega$ defined by $\omega(x,y) = \chi([x,y])$ for all $x,y \in \lie(-1)$. Fix a Lagrangian subspace $\mathcal{L} \subset \lie(-1)$ with respect to this form and let $\mathfrak{m} = \mathcal{L} \oplus \bigoplus_{j \leq -2} \lie(j)$. Denote by $M$ the unipotent subgroup of $G$ with Lie algebra $\mathfrak{m}$. The coadjoint action of $G$ on $\lie^{*}$ is Hamiltonian and restricts to an action of $M$ on $\lie^{*}$ with moment map $\mu : \lie^{*} \rightarrow \mathfrak{m}^{*}$. In \cite{GanGin} and \cite{Kost} it was shown that the reduced Poisson manifold $\mu^{-1}(\chi|_{\mathfrak{m}})/M$ can be identified with the Slodowy slice $\mathcal{S}_{e}$. This identification is known as the Kostant cross-section theorem and it allows one to view $\mathcal{S}_{e}$ as a Poisson submanifold of the quotient $\lie^{*}/M$, in particular, 
$$
\mathbb{C}[\mathcal{S}_{e}] \simeq \mathbb{C}[\mu^{-1}(\chi|_{\mathfrak{m}})]^{M}
$$
is an isomorphism of Poisson algebras. \\

\subsection{}
Let $W$ denote the Weyl group of $G$. In \cite{Sev} an algebraic group analogue of $\mathcal{S}_{e}$ was constructed in which these new slices depend on Weyl group elements $s \in W$. They are transverse to the conjugacy class containing $s^{-1} \in G$ at the point $s^{-1}$, where $s \in G$ is a representative of the Weyl group element $s \in W$ of the same letter, and for which an analogue of the Kostant cross-section theorem holds. One constructs these slices by associating to $s$ a parabolic subgroup $Q \subseteq G$ such that the semisimple part of its Levi factor $L$ is contained in the centraliser of the element $s \in G$. The slices are closed and are of the form $\Sigma_{s} = N_{s}Zs^{-1}$ where $N_{s} = \{ n \in N : sns^{-1} \in N_{-} \}$ and $N, N_{-}$ are the unipotent and opposite unipotent radicals of $Q$, respectively, and $Z = \{ z \in L : szs^{-1} = z \}$. It is also shown in \cite{Sev} that there is an isomorphism of varieties between the slice $\Sigma_{s}$ and the quotient $NZs^{-1}N/N$ with respect to the action of $N$ on $NZs^{-1}N$ by conjugation, where $NZs^{-1}N$ is also closed. This result is an algebraic group analogue of the Kostant cross-section theorem and in particular gives rise to an isomorphism 
$$
\mathbb{C}[\Sigma_{s}] \simeq \mathbb{C}[NZs^{-1}N]^{N}
$$ 
of algebras. In the context of Poisson geometry $G$ can be equipped with a particular Poisson structure which makes it a Poisson manifold, denoted $G_{*}$. The algebra of regular functions on $NZs^{-1}N$ acquires a Poisson structure from the quotient $\mathbb{C}[G_{*}]/I \simeq \mathbb{C}[NZs^{-1}N]$ where $I$ is the vanishing ideal of $NZs^{-1}N$. This equips $\mathbb{C}[\Sigma_{s}]$ with a Poisson structure via the isomorphism $\mathbb{C}[\Sigma_{s}] \simeq \mathbb{C}[NZs^{-1}N]^{N} \subseteq \mathbb{C}[NZs^{-1}N]$ and the algebra of regular functions on $\Sigma_{s}$ is thus a Poisson algebra called the deformed Poisson W-algebra or just q-W-algebra. \\

\subsection{}
In the present article we aim to exploit this Kostant cross-section analogue to find $N$-invariant functions on the subvariety $NZ's^{-1}N$ in the case $G = SL_{l+1}(\mathbb{C})$ where $Z' \subseteq Z$ is (a certain conjugate of) the big cell in $Z$. The simplest case occurs when $s \in W$ is Coxeter, that is, a product of the simple reflections in $W$. Here, one obtains the slice $\Sigma_{s} = N_{s}s^{-1}$ which is transverse to the set of conjugacy classes of regular elements in $G$ and was first constructed in \cite{Stein}. An inductive method for computing the fundamental characters restricted to $N_{s}s^{-1}$ which appears in \cite{Stein} can be used to show that a set of generators for $\mathbb{C}[Ns^{-1}N]^{N}$ may be described in terms of these fundamental characters. However, for a general Weyl group element the number of generators needed to describe $\mathbb{C}[NZs^{-1}N]^{N}$ is larger than in the Coxeter case and the task of constructing a set of generators for this algebra of invariants becomes more technical. \\

\noindent\textbf{Acknowledgment}. The author would like to thank A. Sevastyanov for many useful discussions on this subject and the support from EPSRC grant EP/N023919. \\

\section{Notation}
\subsection{}
We fix some standard notation used throughout this text. The set of integers, real numbers and complex numbers are denoted by the usual letters $\mathbb{Z}$, $\mathbb{R}$ and $\mathbb{C}$, respectively. Moreover, for any element $n \in \mathbb{Z}$ denote by $\mathbb{Z}_{\geq n}$ and $\mathbb{Z}_{> n}$ the subsets of $\mathbb{Z}$ defined by $\mathbb{Z}_{\geq n} = \{ x \in \mathbb{Z} : x \geq n \}$ and $\mathbb{Z}_{> n} = \{ x \in \mathbb{Z} : x > n \}$. Let $G$ be a complex connected finite-dimensional semisimple algebraic group such that $H$ is a maximal torus in $G$. Let the Lie algebras of $H$ and $G$ be denoted by the gothic letters $\lieh$ and $\lie$, respectively. The root system of the pair $(\lie,\lieh)$ is denoted by $\Delta$ and $\Lambda = \{ \alpha_{1}, \ldots, \alpha_{l} \}$ will denote the system of simple roots where $l = \text{rank}\ \lie$. Write $\lie = \lieh \oplus \bigoplus_{\alpha \in \Delta} \lie_{\alpha}$ for the root decomposition of $\lie$ into root subspaces $\lie_{\alpha}$. Denote by $\Delta_{+}$ the set of positive roots in $\Delta$ with respect to $\Lambda$ and write $\Delta_{-}=-\Delta_{+}$. The Weyl group of the root system $\Delta$ is denoted $W$ and is generated by the simple reflections $s_{\alpha_{1}}, \ldots, s_{\alpha_{l}}$. \\

\section{Transverse Slices in Algebraic Groups}
\subsection{} \label{sect: 3.1}
We begin by outlining a general procedure for constructing transverse slices to conjugacy classes in $G$. These slices depend on conjugacy classes in $W$ and are constructed with the help of certain parabolics associated to elements in $W$. This outline follows the one given in \cite{Sev}. \\
Fix an element $s \in W$. This acts on the real span of simple coroots, denoted $\lieh_{\mathbb{R}}$, as an orthogonal transformation and decomposes into a direct sum of $s$-invariant subspaces
$$
\lieh_{\mathbb{R}} = \bigoplus_{i=0}^{K} \lieh_{i}
$$  
where we assume $\lieh_{0}$ is the subspace of elements fixed by the action of $s$. The subspaces $\lieh_{i}$, for $1 \leq i \leq K$, are either one or two dimensional and $s$ acts on $\lieh_{i}$ as rotation by $\theta_{i}$, where $0 < \theta_{i} \leq \pi$. As $s$ has finite order $\theta_{i} = 2\pi / m_{i}$, for some $m_{i} \in \mathbb{Z}_{\geq0}$. Assume that the non-fixed subspaces in the above decomposition are ordered such that $\lieh_{i}$ is written before $\lieh_{j}$ if $\theta_{i} > \theta_{j}$. For each $i = 0, \ldots , K$ choose $h_{i} \in \lieh_{i}$ such that $h_{i}(\alpha) \neq 0$ for any root $\alpha \in \Delta$ which is not orthogonal to the $s$-invariant subspace $\lieh_{i}$ with respect to the Killing form. Define 
$$
\ol\Delta_{i} = \{ \alpha \in \Delta : h_{i}(\alpha) \neq 0, \ h_{j}(\alpha) = 0, \ j>i \}.
$$
Note that some of these subsets may be empty. Let $\{\ol\Delta_{i_{0}}, \ldots, \ol\Delta_{i_{M}}\} = \{\ol\Delta_{i} : \ol\Delta_{i} \neq \emptyset\}$ where $i_{0} = 0$ if $\ol{\Delta}_{i_{0}} \neq \emptyset$ and we assume $i_{j}<i_{k}$ if and only if $j<k$. By a suitable rescaling if necessary assume
$$
|h_{i_{k}}(\alpha)| > |\sum_{l\leq i<k} h_{i_{j}}(\alpha)|,
$$
for any $\alpha \in \ol\Delta_{i_{k}}$, $0 \leq k \leq M$ and $l < k$. Define $\bar{h} = \sum_{k=0}^{M} h_{i_{k}}$, then
$$
\bar{h}(\alpha) = \sum_{j\leq k} h_{i_{j}}(\alpha) = h_{i_{k}}(\alpha) + \sum_{j<k}h_{i_{j}}(\alpha).
$$
From this it follows that, for $\alpha \in \ol\Delta_{i_{k}}$, 
$$
|\bar{h}(\alpha)| = |h_{i_{k}}(\alpha) + \sum_{j<k}h_{i_{j}}(\alpha)| \geq ||h_{i_{k}}(\alpha)| - |\sum_{j<k}h_{i_{j}}(\alpha)|| >0.
$$
Thus $\bar{h}$ belongs to a Weyl chamber of the root system $\Delta$ and so defines a set of positive roots $\Delta_{+}$. Moreover, a root $\alpha \in \ol\Delta_{i_{k}}$ will be positive if and only if $h_{i_{k}}(\alpha) > 0$. \\

\subsection{} \label{sect: 3.2}
Let $\Lambda_{0} = \Lambda \cap \ol{\Delta}_{0}$, then $\Lambda_{0}$ defines a subsystem of roots and one obtains a parabolic $\mathfrak{q} \subset \lie$ from this system with a Levi factor defined by the fixed simple roots $\Lambda_{0}$. Let $\mathfrak{n}$ and $\mathfrak{l}$ denote the nilpotent radical and this Levi factor of $\mathfrak{q}$, respectively, and denote by $Q$ the subgroup with Lie algebra $\mathfrak{q}$ and by $N$ and $L$ the subgroups with Lie algebras $\mathfrak{n}$ and $\mathfrak{l}$, respectively. Let
$$
Z = \{ z \in L : szs^{-1} = z \},
$$
that is, $Z$ is the centraliser of $s$ in $L$ and let
$$
N_{s} = \{ n \in N : sns^{-1} \in N_{-} \}
$$
where $N_{-}$ is the opposite unipotent radical to $N$. Define $\Sigma_{s} = N_{s}Zs^{-1}$ and note that $Z$ normalizes both $N$ and $N_{s}$. Observe that the parabolic $Q$ obtained from $s$ is such that any conjugate of $s$ in $W$ will give rise to a parabolic conjugate to $Q$. Therefore, if $s, s' \in W$ lie in the same conjugacy class then the subvarieties $\Sigma_{s}$ and $\Sigma_{s'}$ are isomorphic, however, it is not yet clear whether the converse is also true in general. It is, however, important for the reader to note that, in general, the number of conjugacy classes in $W$ is greater than the number of nilpotent orbits in $\lie$. Therefore, for a given algebraic group the number of non-isomorphic deformed Poisson W-algebras is greater than the number of non-isomorphic W-algebras; see \cite{Sev}. \\
We refer the reader to \cite{Sev} for the proof(s) of the following result which states that $\Sigma_{s}$ is transverse to the set of conjugacy classes in $G$ and for which an analogue of the Kostant cross-section theorem holds.

\begin{theorem}[\cite{Sev}, Proposition 2.1 and Proposition 2.3] \label{prop:3.2a}
The conjugation map $G \times \Sigma_{s} \rightarrow G$ has a surjective differential and the restriction $N \times \Sigma_{s} \rightarrow NZs^{-1}N$ is an isomorphism of varieties.
\end{theorem}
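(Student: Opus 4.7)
I would prove the two assertions in sequence.

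\emph{Part 1: Surjectivity of the differential.} By $G$-equivariance of conjugation in the first factor, it suffices to show the differential is surjective at the single point $(e, s^{-1})$. Identify $T_{s^{-1}}G \simeq \lie$ via right translation by $s$; then $T_{s^{-1}}\Sigma_{s}$ corresponds to $\mathfrak{n}_{s} \oplus \mathfrak{z}$, where $\mathfrak{n}_{s} = \text{Lie}(N_{s})$ and $\mathfrak{z} = \text{Lie}(Z)$. A direct computation shows that the differential of conjugation at $(e, s^{-1})$ becomes
$$
(X, Y) \longmapsto (1 - \text{Ad}(s^{-1}))X + Y, \qquad X \in \lie, \ Y \in \mathfrak{n}_{s} \oplus \mathfrak{z},
$$
so surjectivity reduces to the vector-space identity
$$
\lie = (1 - \text{Ad}(s^{-1}))\lie + \mathfrak{n}_{s} + \mathfrak{z}.
$$
To prove this I would use the triangular decomposition $\lie = \mathfrak{n}_{-} \oplus \mathfrak{l} \oplus \mathfrak{n}$. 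Since $\text{Ad}(s^{-1})$ acts semisimply on $\mathfrak{l}$ with fixed subspace $\mathfrak{z}$, one has $\mathfrak{l} = \mathfrak{z} \oplus (1 - \text{Ad}(s^{-1}))\mathfrak{l}$. For the nilpotent parts, I would analyse the $s$-orbits on the roots $\alpha$ not belonging to the Levi: each orbit is finite, $1 - \text{Ad}(s^{-1})$ has cokernel of dimension one along any such orbit, and that remaining direction is absorbed into $\mathfrak{n}_{s}$ by selecting an appropriate root within the orbit. The ordering of positive roots given by $\bar h$ in Section \ref{sect: 3.1} is essential here, as it pins down which root of each orbit satisfies $s\alpha < 0$ (so that $\lie_{\alpha} \subseteq \mathfrak{n}_{s}$).

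\emph{Part 2: Variety isomorphism.} I would show that $\Psi : N \times \Sigma_{s} \to NZs^{-1}N$ defined by $\Psi(n, \sigma) = n\sigma n^{-1}$ has image in $NZs^{-1}N$, is bijective on points, and admits a regular inverse. For the image containment, write $\sigma = n_{s} z s^{-1}$ and rearrange
$$
n (n_{s} z s^{-1}) n^{-1} = (n n_{s} z)(s^{-1} n^{-1} s) s^{-1};
$$
the decomposition $\mathfrak{n} = \mathfrak{n}_{s}^{c} \oplus \mathfrak{n}_{s}$, where $\mathfrak{n}_{s}^{c}$ is spanned by the $\lie_{\alpha} \subseteq \mathfrak{n}$ with $s\alpha > 0$ (both summands being Lie subalgebras), integrates to a variety decomposition $N \simeq N_{s}^{c} \times N_{s}$; together with $s^{-1} N_{s}^{c} s \subseteq N$ this shows the right-hand side indeed lies in $NZs^{-1}N$. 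Bijectivity would follow from the dimension equality $\dim(N \times \Sigma_{s}) = \dim N + \dim N_{s} + \dim Z = \dim(NZs^{-1}N)$ combined with injectivity of $d\Psi$ (a consequence of the identity of Part 1, rephrased as triviality of the $N$-stabiliser along $\Sigma_{s}$). A regular inverse is constructed by taking $y = n_{1} z s^{-1} n_{2} \in NZs^{-1}N$, decomposing $n_{2} = n_{2}^{c} n_{2,s}$ via $N = N_{s}^{c} N_{s}$, and solving uniquely for the unique $n \in N$ making $n^{-1} y n \in \Sigma_{s}$.

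The main obstacle is the identity $\lie = (1 - \text{Ad}(s^{-1}))\lie + \mathfrak{n}_{s} + \mathfrak{z}$ of Part 1, since it rests on the combinatorics of $s$-orbits on the root system and on the specific choice of Weyl chamber given by $\bar h$. Once this identity is in hand, the variety isomorphism in Part 2 follows by routine algebraic manipulation using the $N_{s}^{c}$–$N_{s}$ factorisation.
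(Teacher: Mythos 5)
You should first note that the paper does not prove this statement at all: it is imported verbatim from \cite{Sev} (Propositions 2.1 and 2.3), and the only piece of its proof reproduced here is the inductive mechanism behind the cross-section statement, recalled in \ref{sect: 4.8}. Measured against that, your proposal has genuine gaps in both parts.

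In Part 1 the very first step fails. Equivariance of conjugation in the first factor, $\alpha(hg,\sigma)=h\,\alpha(g,\sigma)\,h^{-1}$, reduces surjectivity of the differential at $(g,\sigma)$ to surjectivity at $(e,\sigma)$ \emph{for the same} $\sigma$; it relates nothing across different points of $\Sigma_{s}$, so it cannot reduce the problem to the single point $(e,s^{-1})$. There is no group acting compatibly with conjugation and transitively on $\Sigma_{s}$ (conjugation by $Z$ preserves $\Sigma_{s}$ but fixes the $Z$-coordinate), and no contracting $\mathbb{C}^{*}$-action onto $s^{-1}$ is available as in the Slodowy-slice case; openness of the surjectivity locus only gives a neighbourhood of $s^{-1}$. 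Your root-orbit analysis of $\lie=(1-\mathrm{Ad}(s^{-1}))\lie+\mathfrak{n}_{s}+\mathfrak{z}$ at the base point is essentially sound (modulo the fact that the cokernel along an orbit can also be zero), but that is the easy part: the content of \cite{Sev}, Proposition 2.1 is the analogous identity at \emph{every} point $p=n_{s}zs^{-1}\in\Sigma_{s}$, where $\mathrm{Ad}(p)$ no longer permutes root lines and the orbit combinatorics does not apply directly.

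In Part 2 the logic is circular where it matters. The inference ``equal dimensions plus injective $d\Psi$ implies bijectivity'' is false: an everywhere-\'etale morphism between varieties of the same dimension need not be injective (consider $z\mapsto z^{2}$ on $\mathbb{C}^{*}$); and injectivity of $d\Psi$ was anyway only tied to the unproven pointwise identity of Part 1. Worse, your ``regular inverse'' is obtained by ``solving uniquely for the unique $n\in N$ making $n^{-1}yn\in\Sigma_{s}$'' --- but the existence, uniqueness and regular dependence of this $n$ is precisely the assertion being proved. In \cite{Sev}, and in \ref{sect: 4.8} of this paper, that element is constructed by an explicit induction over the decomposition $N=N_{D+1}\cdots N_{1}$ of (\ref{eq:4.1c}), via equations (\ref{eq:4.1d})--(\ref{eq:4.1g}), and this induction is what simultaneously delivers all three properties; nothing about it is ``routine algebraic manipulation.'' Two smaller slips: the subgroup complementary to $N_{s}$ is $N'_{s}=N\cap s^{-1}Ns$, for which $sN'_{s}s^{-1}\subseteq N$ (your containment $s^{-1}N_{s}^{c}s\subseteq N$ has the conjugation backwards), and the containment of the image in $NZs^{-1}N$ needs no factorisation at all, since $n(n_{s}zs^{-1})n^{-1}=(nn_{s}z)s^{-1}n^{-1}$ lies in $NZs^{-1}N$ on sight.
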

\hfill

\section{A System of Positive Roots}
\subsection{}
Throughout this section $G = SL_{l+1}(\mathbb{C})$, the group of $(l+1)\times(l+1)$ invertible matrices with unit determinant, which means $\lie = \mathfrak{sl}_{l+1}$, the Lie algebra of traceless $(l+1)\times(l+1)$ matrices. The Weyl group of $G$ is $W = S_{l+1}$. For $1 \leq a,b \leq l+1$ let $E_{a,b}$ denote the $(l+1)\times(l+1)$ matrix which has 1 in the $(a,b)$-th position and 0 elsewhere. Unless otherwise stated, let $\Delta$ be the root system of the pair $(\lie,\lieh)$ where $\lieh$ is the Cartan subalgebra consisting of diagonal matrices in $\lie$. Identify $\lieh$ with the subspace $\bigoplus_{i=1}^{l}\mathbb{C}(e_{i}-e_{i+1}) \subset \mathbb{C}^{l+1}$ where $e_{i} \in \mathbb{C}^{l+1}$ is the vector with 1 in the $i$-th position and 0 elsewhere. Moreover, $\Delta_{+}$ is the system of positive roots in $\Delta$ such that the root subspace of $\alpha_{i}+\alpha_{i+1}+\cdots+\alpha_{j}$, for $1 \leq i \leq j \leq l$, in $\lie$ is generated by $E_{i,j+1}$. \\
Define the maps 
$$
\text{row}: \Delta \rightarrow \mathbb{Z}_{>0}; \ \text{row}(\alpha) = \left\{\begin{array}{ll} i, & \text{if $\alpha = \alpha_{i} + \cdots + \alpha_{j} \in \Delta_{+}$} \\
                                                                                                                          j+1, & \text{if $\alpha = -(\alpha_{i} + \cdots + \alpha_{j}) \in \Delta_{-}$}
\end{array}\right.
$$
and
$$
\text{col}: \Delta \rightarrow \mathbb{Z}_{>0}; \ \text{col}(\alpha) = \left\{\begin{array}{ll} j+1, & \text{if $\alpha = \alpha_{i} + \cdots + \alpha_{j} \in \Delta_{+}$} \\
                                                                                                                          i, & \text{if $\alpha = -(\alpha_{i} + \cdots + \alpha_{j}) \in \Delta_{-}$.}
\end{array}\right.
$$
For any $c \in \mathbb{C}$ and $\alpha \in \Delta$ define $X_{\alpha}(c) = \exp(cE_{\alpha})$, where
$$
E_{\alpha} =  \left\{ \begin{array}{ll} E_{i,j+1}, & \text{if $\alpha = \alpha_{i} + \cdots + \alpha_{j}$} \\
                                                             E_{j+1,i}, & \text{if $\alpha = -(\alpha_{i} + \cdots + \alpha_{j})$,}
\end{array}\right.
$$
so that $c \mapsto X_{\alpha}(c)$ is a one-parameter subgroup in $G$. We will say that $X_{\alpha}$ is the one-parameter subgroup associated to $\alpha$ or that $X_{\alpha}$ is an $\alpha$-one-parameter subgroup. Finally, define the floor and ceiling functions $\lfloor\cdot\rfloor:\mathbb{R} \rightarrow \mathbb{Z}$ and $\lceil\cdot\rceil:\mathbb{R} \rightarrow \mathbb{Z}$, respectively, by $\lfloor x \rfloor = \text{max}\{ y \in \mathbb{Z} : y \leq x \}$ and $\lceil x \rceil = \text{min}\{ y \in \mathbb{Z} : y \geq x \}$. \\

\subsection{}
The varieties $\Sigma_{s}$ depend on conjugacy classes in $W$ which in type A are cycle types in $S_{l+1}$. These cycle types are parametrized by partitions of $l+1$. Therefore, the number of non-isomorphic deformed Poisson W-algebras in type A is at most the number of partitions of $l+1$. Compare this with nilpotent classes in type A which are also parametrized by partitions of $l+1$. It was mentioned in \ref{sect: 3.2} that, in general, the number of non-isomorphic deformed Poisson W-algebras of $G$ is greater than the number of non-isomorphic W-algebras of $\lie$. It therefore follows that there are the same number of non-isomorphic W-algebras as non-isomorphic deformed Poisson W-algebras in type A and, moreover, that the varieties $\Sigma_{s}$ don't just depend on the conjugacy classes of $W$ but are in fact parametrized by the conjugacy classes of $W$. \\

\subsection{}
In \ref{sect: 3.1} a method was outlined for obtaining a system of positive roots from any Weyl group element and from this we constructed the subvariety $\Sigma_{s}$. For a general Weyl group element it is however difficult to deduce which roots will form a positive root system. Since these slices are parametrized by conjugacy classes in $W$ we would ideally like to find conjugacy class representatives that will give the associated positive root system $\Delta_{+}$ a predictable form. To this end, we state a result of Carter which proves to be useful.

\begin{theorem}[\cite{Car}, Theorem C]
Every element of a Weyl group is expressible as a product of two involutions.
\end{theorem}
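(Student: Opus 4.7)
The plan is to exhibit two involutions in $W$ whose product is $w$, by leveraging the orthogonal decomposition of $\mathfrak{h}_{\mathbb{R}}$ from Section~\ref{sect: 3.1} together with the fact that reflections supported on mutually orthogonal subspaces commute. I find it cleaner to work with the equivalent reformulation: it suffices to find an involution $\sigma \in W$ with $\sigma w \sigma^{-1} = w^{-1}$, for then $\sigma_{1} = \sigma$ and $\sigma_{2} = \sigma w$ are both involutions with $\sigma_{1}\sigma_{2} = w$. This is the \emph{strong reality} property of $W$.

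First I would write $\mathfrak{h}_{\mathbb{R}} = \bigoplus_{i=0}^{K} \mathfrak{h}_{i}$ as in Section~\ref{sect: 3.1}, so that $w$ acts trivially on $\mathfrak{h}_{0}$ and by rotation through some $\theta_{i}$ on each two-dimensional $\mathfrak{h}_{i}$. In the ambient orthogonal group $O(\mathfrak{h}_{\mathbb{R}})$ every such rotation is a product of two reflections supported on $\mathfrak{h}_{i}$; since distinct $\mathfrak{h}_{i}$ are mutually orthogonal, reflections supported on different planes commute. Grouping the ``first'' and ``second'' reflections across all $i$ therefore produces two mutually commuting products of reflections in $O(\mathfrak{h}_{\mathbb{R}})$, both of which are involutions and whose product is $w$. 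The problem is thus reduced to realizing these involutions inside $W$ rather than merely inside $O(\mathfrak{h}_{\mathbb{R}})$.

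The crux is therefore to show that for each two-dimensional invariant plane $\mathfrak{h}_{i}$ there is a rank-two sub-root system $\Delta^{(i)} \subseteq \Delta$ acting as a dihedral group on $\mathfrak{h}_{i}$ and trivially on every $\mathfrak{h}_{j}$ with $j \neq i$, such that $w|_{\mathfrak{h}_{i}}$ is realized by the product of two reflections from $\Delta^{(i)}$. Given this, one chooses reflections $t_{i}, t'_{i} \in W$ from each $\Delta^{(i)}$ with $t_{i}t'_{i}|_{\mathfrak{h}_{i}} = w|_{\mathfrak{h}_{i}}$ and trivial action elsewhere, and sets $\sigma_{1} = \prod_{i} t_{i}$, $\sigma_{2} = \prod_{i} t'_{i}$; the orthogonality of the $\mathfrak{h}_{i}$ ensures $\sigma_{1}^{2} = \sigma_{2}^{2} = 1$ and $\sigma_{1}\sigma_{2} = w$. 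I expect the production of the rank-two subsystems $\Delta^{(i)}$ to be the main obstacle, since it is precisely this step that makes Carter's theorem a genuine statement about Weyl groups rather than just orthogonal groups; in arbitrary type it seems to require a case analysis of $w$-stable sub-root systems (Carter's admissible diagrams), whereas in type A, which is the eventual focus of this paper, the construction is transparent from the cycle decomposition of $w \in S_{l+1}$.
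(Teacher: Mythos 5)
First, a point of order: the paper does not prove this statement at all --- it is Carter's Theorem C, quoted with a citation to \cite{Car}, and the refinement the paper uses immediately afterwards (each involution a product of reflections in mutually orthogonal positive roots) is also taken from Carter. So your proposal can only be compared with Carter's own argument. Your reduction to strong reality is correct: if $\sigma$ is an involution with $\sigma w \sigma^{-1}=w^{-1}$, then $\sigma$ and $\sigma w$ are involutions whose product is $w$. The picture in $O(\mathfrak{h}_{\mathbb{R}})$ is also fine. The problem is your crux step, and it is not a deferrable technicality: it is impossible in general, already in type A.

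You require, for each rotation plane $\mathfrak{h}_{i}$, root reflections $t_{i},t'_{i}$ acting trivially on every $\mathfrak{h}_{j}$ with $j\neq i$. Since $s_{\alpha}$ fixes pointwise only $\alpha^{\perp}$, such reflections must come from roots $\alpha$ lying in $\mathfrak{h}_{i}$; so your construction needs the roots of $\Delta$ contained in $\mathfrak{h}_{i}$ to generate a dihedral group containing the rotation $w|_{\mathfrak{h}_{i}}$. Take $w=(1234)\in S_{4}=W(A_{3})$, a single cycle --- exactly the case this paper treats. Then $\mathfrak{h}_{\mathbb{R}}$ splits into the plane $P$ spanned by $e_{1}-e_{3}$ and $e_{2}-e_{4}$, on which $w$ rotates by $\pi/2$, and the line spanned by $e_{1}-e_{2}+e_{3}-e_{4}$, on which $w$ acts by $-1$. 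The only roots of $A_{3}$ lying in $P$ are $\pm(e_{1}-e_{3})$ and $\pm(e_{2}-e_{4})$; they generate a Klein four-group whose restriction to $P$ contains only the rotation by $\pi$, never by $\pi/2$. Worse, the $(-1)$-line contains no root at all, so no root reflection is supported on it. Hence no plane-by-plane choice of $t_{i},t'_{i}$ exists, and your closing claim that in type A the construction is ``transparent from the cycle decomposition'' fails for the same reason: the standard factorization of an $n$-cycle into two reversal involutions produces involutions each of which moves \emph{every} eigenplane. The theorem is of course true here: $(1234)=(13)\cdot(12)(34)$, with $(12)(34)=s_{e_{1}-e_{2}}s_{e_{3}-e_{4}}$ a product of reflections in two mutually orthogonal roots --- but those roots lie in no single invariant subspace, and $(12)(34)$ acts nontrivially on $P$ and on the $(-1)$-line simultaneously. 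That is the actual shape of Carter's theorem: the two involutions are products of reflections in mutually orthogonal roots, but these roots necessarily straddle the rotation planes, so any argument that localizes them to individual planes cannot succeed. Carter's proof is correspondingly different in kind: one reduces to elliptic $w$ (no nonzero fixed vectors) using Steinberg's theorem on the pointwise stabilizer of the fixed subspace, and then argues by induction on rank using minimal-length factorizations of $w$ into reflections along linearly independent roots; no plane-by-plane dihedral subsystem appears, and none can.
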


In particular, any $s \in W$ can be expressed as the product of two involutions $s = s_{1}s_{2}$ where $s_{1} = s_{\gamma_{1}} \cdots s_{\gamma_{n}}$ and $s_{2} = s_{\gamma_{n+1}} \cdots s_{\gamma_{l'}}$ such that the roots in each of the sets $\{ \gamma_{1}, \ldots, \gamma_{n} \}$ and $\{ \gamma_{n+1}, \ldots, \gamma_{l'} \}$ are positive and mutually orthogonal. \\
The method of associating a system of positive roots to a Weyl group element can just as easily be reversed, by which we mean, for any given positive root system $\Delta_{+}$ we may ask which element or elements in $W$ are associated to $\Delta_{+}$ in the sense of \ref{sect: 3.1}. \\
As conjugacy classes of $W$ are parametrized by their cycle type, which are products of disjoint cycles, from this point onward we will assume $s$ is a single cycle of length greater than one. In the following proposition, for any given system of positive roots, we state a representative $s \in W$ of the conjugacy class of single cycles associated to this system. The form of this representative splits into four cases.

\begin{proposition} \label{prop:4.1a}
Let $s \in W$ be a single cycle such that $s$ is represented as a product, $s=s_{1}s_{2}$, of two involutions $s_{1} = s_{\gamma_{1}} \cdots s_{\gamma_{n}}$ and $s_{2} = s_{\gamma_{n+1}} \cdots s_{\gamma_{l'}}$ and let $\Delta_{+}$ be any system of positive roots. The root system $\Delta_{+}$ is associated to $s$ when the sets $\{\gamma_{1}, \ldots, \gamma_{n}\}$, $\{\gamma_{n+1}, \ldots, \gamma_{l'}\}$ have one of the following forms:
\begin{itemize}
\item[(i)] For $m$ even and $l'$ odd,
\begin{align*}
\{\gamma_{1}, \ldots, \gamma_{n}\} &= \{\alpha_{2}, \alpha_{4}, \cdots, \alpha_{m}, \alpha_{m+p+2}, \alpha_{m+p+4}, \cdots, \alpha_{m+p+m}\} \\
\{\gamma_{n+1}, \ldots, \gamma_{l'}\} &= \{\alpha_{1}, \alpha_{3}, \cdots, \alpha_{m-1}, \gamma, \alpha_{m+p+3}, \alpha_{m+p+5}, \cdots, \alpha_{m+p+m+1}\}
\end{align*}

\item[(ii)] For $m$ even and $l'$ even,
\begin{align*}
\{\gamma_{1}, \ldots, \gamma_{n}\} &= \{\alpha_{2}, \alpha_{4}, \cdots, \alpha_{m}, \alpha_{m+p+2}, \alpha_{m+p+4}, \cdots, \alpha_{m+p+m+2}\}\\
\{\gamma_{n+1}, \ldots, \gamma_{l'}\} &= \{\alpha_{1}, \alpha_{3}, \cdots, \alpha_{m-1}, \gamma, \alpha_{m+p+3}, \alpha_{m+p+5}, \cdots, \alpha_{m+p+m+1}\}
\end{align*}

\item[(iii)] For $m$ odd and $l'$ odd,
\begin{align*}
\{\gamma_{1}, \ldots, \gamma_{n}\} &= \{\alpha_{1}, \alpha_{3}, \cdots, \alpha_{m}, \alpha_{m+p+2}, \alpha_{m+p+4}, \cdots, \alpha_{m+p+m+1}\} \\
\{\gamma_{n+1}, \ldots, \gamma_{l'}\} &= \{\alpha_{2}, \alpha_{4}, \cdots, \alpha_{m-1}, \gamma, \alpha_{m+p+3}, \alpha_{m+p+5}, \cdots, \alpha_{m+p+m}\}
\end{align*}

\item[(iv)] For $m$ odd and $l'$ even,
\begin{align*}
\{\gamma_{1}, \ldots, \gamma_{n}\} &= \{\alpha_{1}, \alpha_{3}, \cdots, \alpha_{m}, \alpha_{m+p+2}, \alpha_{m+p+4}, \cdots, \alpha_{m+p+m+1}\} \\
\{\gamma_{n+1}, \ldots, \gamma_{l'}\} &= \{\alpha_{2}, \alpha_{4}, \cdots, \alpha_{m-1}, \gamma, \alpha_{m+p+3}, \alpha_{m+p+5}, \cdots, \alpha_{m+p+m+2}\}
\end{align*}
\end{itemize}
where $m=\lfloor\frac{1}{2}(l'-1)\rfloor$, $p=l-l'$ and $\gamma = \alpha_{m+1} + \cdots + \alpha_{m+p+1}$.
\end{proposition}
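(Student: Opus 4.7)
The plan is to verify the proposition case-by-case across the four parity regimes, by (a) checking that the prescribed $s = s_{1}s_{2}$ is a single cycle of length $l'+1$ in $W = S_{l+1}$, and (b) exhibiting choices of $h_{i} \in \lieh_{i}$ in the procedure of Section \ref{sect: 3.1} applied to $s$ that produce a $\bar{h}$ inside the Weyl chamber cut out by the given $\Delta_{+}$.

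For (a), I would identify each $s_{\alpha_{i}}$ with the adjacent transposition $(i, i+1)$ and $s_{\gamma}$ with the transposition $(m+1, m+p+2)$, since $\gamma = \alpha_{m+1} + \cdots + \alpha_{m+p+1}$ corresponds to $E_{m+1,m+p+2}$. Tracking the image of each index under $s_{1}s_{2}$ should show that $\{m+2, \ldots, m+p+1\}$ are fixed while $\{1, \ldots, m+1\} \cup \{m+p+2, \ldots, l+1\}$ forms one cycle of size $l'+1$. Mutual orthogonality of the roots inside $\{\gamma_{1}, \ldots, \gamma_{n}\}$ and inside $\{\gamma_{n+1}, \ldots, \gamma_{l'}\}$, needed for $s_{1}$ and $s_{2}$ to be involutions, follows from non-adjacency of the chosen simple roots together with the observation that $\gamma$ is orthogonal to $\alpha_{i}$ unless $i \in \{m, m+1, m+p+1, m+p+2\}$.

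For (b), I would describe the $s$-invariant decomposition $\lieh_{\mathbb{R}} = \bigoplus_{i} \lieh_{i}$. Since $s$ acts as a single $(l'+1)$-cycle on an $(l'+1)$-dimensional coordinate subspace and trivially on the rest, $\lieh_{0}$ splits as the sum of vectors constant on the cycle and the span of the fixed coordinates (modulo the trace relation), while the remaining $\lieh_{i}$ are 2D rotation subspaces with angles $\theta_{i} = 2\pi k/(l'+1)$ for $k = 1, \ldots, \lfloor l'/2 \rfloor$, plus one extra 1D $(-1)$-eigenspace when $l'+1$ is even. This parity distinction is what drives the four-case split. Taking each $h_{i}$ to be built from the real/imaginary parts of the complex eigenvectors $(1, \zeta^{-k}, \zeta^{-2k}, \ldots)$ of the cyclic action (with arbitrary extension to fixed coordinates for $h_{0}$), rescaled and signed so that the $\theta_{i}$-ordering and the successive magnitude inequalities of Section \ref{sect: 3.1} hold, then yields a candidate $\bar{h}$.

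The main obstacle is verifying $\bar{h}(\alpha_{j}) > 0$ for every simple root $\alpha_{j} \in \Lambda$ of the given $\Delta_{+}$, in each of the four cases. The rescaling hypothesis
\[
|h_{i_{k}}(\alpha)| > \bigg|\sum_{j<k} h_{i_{j}}(\alpha)\bigg|
\]
reduces this to checking that, for the index $k$ with $\alpha_{j} \in \ol\Delta_{i_{k}}$, the sign of $h_{i_{k}}(\alpha_{j})$ is positive. One must therefore partition $\Lambda$ according to the first $\lieh_{i}$ on which each simple root is non-zero, and then pin down the sign. The case split (i)--(iv) reflects exactly how the position of $\gamma$ inside $s_{2}$ and the presence or absence of the $(-1)$-eigenspace interact with the parities of $l'$ and $m$ to determine this partition, so the residual work is to execute the sign bookkeeping separately in each case.
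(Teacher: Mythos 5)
Your framework is the right one, and your part (a) (the cycle/orthogonality check for $s=s_{1}s_{2}$) is fine and matches what the paper asserts. But the proposal has a genuine gap: everything that actually constitutes the proof is deferred in your final sentence as ``residual sign bookkeeping.'' The content of the proposition is precisely which sets $\{\gamma_{1},\ldots,\gamma_{n}\}$, $\{\gamma_{n+1},\ldots,\gamma_{l'}\}$ produce the given $\Delta_{+}$; an outline that never computes anything depending on the specific composition of those sets cannot verify this --- it would read identically if, say, $\alpha_{1}$ and $\alpha_{2}$ were swapped in case (i), which yields a different positive system. Moreover, the bookkeeping as you set it up (partition $\Lambda$ by the invariant subspace $\lieh_{i}$ of largest index on which each simple root has nonzero projection, across all $\lfloor l'/2\rfloor$ rotation planes, subject to the rescaling inequalities) is substantially harder than what is actually needed, and you give no mechanism for carrying it out.

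The paper closes exactly this gap with two concrete steps that your outline lacks. First, it proves that \emph{every} root not fixed by $s$ has nonzero orthogonal projection onto the single plane $\lieh_{K}$ (rotation angle $2\pi/(l'+1)$), i.e.\ $\Delta = \ol{\Delta}_{0}\cup\ol{\Delta}_{K}$ and all intermediate $\ol{\Delta}_{i}$ are empty; this collapses your multi-subspace partition to a triviality, so only the sign of $h_{K}(\alpha)$ ever matters. Second, it converts that sign condition into an explicit computation: identifying $\lieh_{K}\simeq\CC$ via the eigenvector $v^{\lambda}$, $\lambda = e^{2\pi i/(l'+1)}$, whose components are written out for the specific interleaving of the two involutions (they are not the naive $(1,\zeta^{-k},\zeta^{-2k},\ldots)$ in standard coordinates), the projection of $\alpha$ corresponds to the complex number $(\alpha,v^{\lambda})_{\lieh}$, and a direct evaluation shows $\arg(\alpha_{k},v^{\lambda})_{\lieh}\in(-\tfrac{\pi}{2},\tfrac{\pi}{2}]$ for every non-fixed simple root $\alpha_{k}$, so all projections lie in a common half-plane (and are nonzero, which also yields the first step for non-simple roots via the cone argument). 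Both of these are computations your plan would have to discover and execute before it becomes a proof; had you done so, you would essentially have reproduced the paper's argument rather than found an alternative route.
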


\begin{proof}
We consider case (i) of the proposition, the proof of the other cases being similar. Assume that $s = s_{1}s_{2}$ where $s_{1} = s_{\gamma_{1}} \cdots s_{\gamma_{n}}$, $s_{2} = s_{\gamma_{n+1}} \cdots s_{\gamma_{l'}}$ and the roots $\gamma_{i}$, $1 \leq i \leq l'$, are given in (i). This is indeed a single cycle in $W$ and has order $l'+1$. We show that $\Delta_{+}$ is associated to $s$, first by showing that all roots not fixed by the action of $s$ have non-zero orthgonal projection onto $\lieh_{K}$, i.e. $\Delta = \ol{\Delta}_{0}\cup\ol{\Delta}_{K}$, then by showing that all projections of roots from $(\ol{\Delta}_{K})_{+} = \ol{\Delta}_{K}\cap\Delta_{+}$ onto $\lieh_{K}$ lie in a half plane. \\
The element $s$ acts as an orthogonal transformation on $\lieh_{\mathbb{R}}$ with inner product $(\cdot, \cdot)_{\lieh_{\mathbb{R}}}$, the restriction of the Killing form on $\lieh$ to $\lieh_{\mathbb{R}}$. Extend this to a sesquilinear product $(\cdot, \cdot)_{\lieh}$ on $\lieh$ by defining
$$
(v,v')_{\lieh} = (u,u')_{\lieh_{\mathbb{R}}} + (w,w')_{\lieh_{\mathbb{R}}} + i(w,u')_{\lieh_{\mathbb{R}}} - i(u,w')_{\lieh_{\mathbb{R}}} 
$$
for $v,v' \in \lieh$ such that $v= u+iw$, $v'= u'+iw'$ with $u,u',w,w' \in \lieh_{\mathbb{R}}$ and $(u,u')_{\lieh} = (u,u')_{\lieh_{\mathbb{R}}}$. With respect to this sesquilinear product $s$ may be extended to a unitary operator on $\lieh$. If $v = (v_{1}, \ldots, v_{l+1}) \in \lieh$ where, for $1 \leq i \leq l+1$, $v_{i}$ are the components of $v$ with respect to the basis $e_{1}, \ldots, e_{l+1}$, the action of $s$ on $\lieh$ is given by 
$$
s(v_{1}, \ldots, v_{l+1}) = (v_{s^{-1}(1)}, \ldots, v_{s^{-1}(l+1)})
$$
where $s^{-1}(i)$ is the action of $s^{-1}$ on $i$ induced by the permutation group action of $S_{l+1}$ on the set $\{1,2,\ldots,l+1\}$. This unitary operator has order $l'+1$ thus if $\lambda$ is an eigenvalue of $s$ and $v^{\lambda} \in \lieh$ is a non-zero $\lambda$-eigenvector then
$$
v^{\lambda} = s^{l'+1}v^{\lambda} = \lambda^{l'+1}v^{\lambda}
$$
and hence
$$
\lambda^{l'+1} = 1,
$$
that is, the eigenvalues of $s$ are $(l'+1)$-th roots of unity. This gives the spectral decomposition
$$
\lieh = \bigoplus_{\lambda \in \mathbb{C}:\lambda^{l'+1}=1} \lieh(\lambda)
$$
of $\lieh$ where $\lieh(\lambda)$ is the $\lambda$-eigenspace of $s$ and is spanned by the vector $v^{\lambda}=(v_{1}^{\lambda},\ldots, v_{l+1}^{\lambda})$ where
\begin{align*}
v_{i}^{\lambda} &= \left\{ \begin{array}{ll} \lambda^{\frac{1}{2}(4m-i+5)}, & \text{if $1 \leq i \leq m+1$ and $i$ odd} \\
                                                   \lambda^{\frac{1}{2}i}, & \text{if $1 \leq i \leq m+1$ and $i$ even} \\
                                                   0, & \text{if $m+2 \leq i \leq m+p+1$},
\end{array}\right. \\
v_{m+p+i}^{\lambda} &= \left\{ \begin{array}{ll}  \lambda^{\frac{1}{2}(m+i)}, & \text{if $2 \leq i \leq m+2$ and $i$ even} \\
                                                              \lambda^{\frac{1}{2}(3m-i+5)}, & \text{if $2 \leq i \leq m+2$ and $i$ odd}
\end{array}\right.
\end{align*}
for $\lambda \neq 1$. When $\lambda = 1$ the subspace $\lieh(1)$ is spanned by the vectors $\alpha_{m+2}, \ldots, \alpha_{m+p}$. \\
Write $v^{\lambda}=u^{\lambda}+iw^{\lambda}$, $u^{\lambda},w^{\lambda} \in \lieh_{\mathbb{R}}$, and note that $\ol{v}^{\lambda} = u^{\lambda} - iw^{\lambda}$ is a $\ol{\lambda}$-eigenvector as $s$ commutes with complex conjugation. Moreover,
$$
(v^{\lambda},\ol{v}^{\lambda})_{\lieh} = (\lambda^{-1}sv^{\lambda}, \ol{\lambda}^{-1}s\ol{v}^{\lambda})_{\lieh} = \lambda^{-2}(sv^{\lambda},s\ol{v}^{\lambda})_{\lieh} = \lambda^{-2}(v^{\lambda},\ol{v}^{\lambda})_{\lieh}.
$$
It follows that $(v^{\lambda},\ol{v}^{\lambda})_{\lieh} = 0$ or $\lambda^{2} = 1$. For $\lambda^{2} \neq 1$,
$$
0 = (v^{\lambda},\ol{v}^{\lambda})_{\lieh} = (u^{\lambda}+iw^{\lambda}, u^{\lambda}-iw^{\lambda})_{\lieh} = (u^{\lambda},u^{\lambda})_{\lieh}-(w^{\lambda},w^{\lambda})_{\lieh}+2i(u^{\lambda},w^{\lambda})_{\lieh}.
$$
Comparing real and imaginary parts gives $(u^{\lambda},u^{\lambda})_{\lieh}=(w^{\lambda},w^{\lambda})_{\lieh}$ and $(u^{\lambda},w^{\lambda})_{\lieh}=0$. Write $\lambda = e^{\frac{2\pi ik}{l'+1}}$ for $1 \leq k \leq l'$ and define $\lieh^{\lambda} = \langle u^{\lambda},w^{\lambda} \rangle_{\mathbb{R}}$, the real plane in $\lieh_{\mathbb{R}}$ spanned by the orthogonal vectors $u^{\lambda},w^{\lambda}$. It follows from 
\begin{align*}
sv^{\lambda}&=\lambda v^{\lambda}, \\
su^{\lambda} + isw^{\lambda} &= \big(\cos(\frac{2\pi k}{l'+1}) + i\sin(\frac{2\pi k}{l'+1})\big)(u^{\lambda}+iw^{\lambda}), \\
su^{\lambda} + isw^{\lambda} &= (\cos(\frac{2\pi k}{l'+1})u^{\lambda} - \sin(\frac{2\pi k}{l'+1})w^{\lambda}) + i(\sin(\frac{2\pi k}{l'+1})u^{\lambda}+\cos(\frac{2\pi k}{l'+1})w^{\lambda})
\end{align*}
that $s$ acts on $\lieh^{\lambda}$ in $\lieh_{\mathbb{R}}$ by rotation by $\frac{2\pi k}{l'+1}$ and that $\lieh^{\lambda} = \lieh_{i}$ for some $1 \leq i \leq K$. Fix $\lambda = e^{\frac{2\pi i}{l'+1}}$ so that $\lieh^{\lambda} = \lieh_{K}$.\\
It is clear that when $s$ is of the form (i) the simple roots fixed by the action of $s$ are $\alpha_{m+2},\ldots, \alpha_{m+p}$. We now show that the elements of $\Delta\backslash\ol{\Delta}_{0}$ have non-zero projection onto the plane $\lieh_{K}$ in $\lieh_{\mathbb{R}}$. Note that it suffices to verify this for the simple roots in $\Delta\backslash\ol{\Delta}_{0}$ only as $\ol{\Delta}_{0}$ is the root system of the Levi factor $\mathfrak{l}$ and every root from $\Delta \backslash \ol{\Delta}_{0}$ has at least one simple root from $\Delta \backslash \ol{\Delta}_{0}$ in its decomposition with respect to the system of simple roots. Let $P: \lieh_{\mathbb{R}} \rightarrow \lieh_{K}$ denote the orthogonal projection operator. For $\alpha \in \lieh_{\mathbb{R}}$
$$
P\alpha = \frac{(\alpha, u^{\lambda})_{\lieh}}{(u^{\lambda},u^{\lambda})_{\lieh}^{2}}u^{\lambda} + \frac{(\alpha,w^{\lambda})_{\lieh}}{(w^{\lambda},w^{\lambda})_{\lieh}^{2}}w^{\lambda} = \frac{1}{(u,^{\lambda}u^{\lambda})_{\lieh}}[(\alpha,u^{\lambda})_{\lieh}\hat{u}^{\lambda} + (\alpha,w^{\lambda})_{\lieh}\hat{w}^{\lambda}]
$$
where $\hat{u}^{\lambda}=\frac{u^{\lambda}}{(u^{\lambda},u^{\lambda})_{\lieh}}, \hat{w}^{\lambda}=\frac{w^{\lambda}}{(w^{\lambda},w^{\lambda})_{\lieh}}$. We may compute this orthogonal projection using the complex vector space $\lieh$ in the following way. For $\alpha \in \lieh_{\mathbb{R}} \subset \lieh$,
$$
(\alpha,v^{\lambda})_{\lieh} = (\alpha, u^{\lambda}+iw^{\lambda})_{\lieh} = (\alpha,u^{\lambda})_{\lieh} + i(\alpha,w^{\lambda})_{\lieh}.
$$
Define an isomorphism of real vector spaces $f:\lieh_{K} \rightarrow \mathbb{C}$ which maps the basis vectors $\hat{u}^{\lambda}, \hat{v}^{\lambda}$ as $f(\hat{u}^{\lambda}) = 1$ and $f(\hat{w}^{\lambda})=i$. Thus $P\alpha$ is mapped to $\frac{1}{(u^{\lambda},u^{\lambda})_{\lieh}}(\alpha,v^{\lambda})_{\lieh}$ by $f$ and the vectors $P\alpha$, for $\alpha \in \Delta\backslash\ol{\Delta}_{0}$, are non-zero and lie in a half plane in $\lieh_{\mathbb{R}}$ if and only if the complex numbers $(\alpha,v^{\lambda})_{\lieh}$, $\alpha \in \Delta\backslash\ol{\Delta}_{0}$, are non-zero and lie in a half plane in $\mathbb{C}$. When $\lambda^{2}=1$ the element $s$ is a single transposition, $s=s_{\gamma}$, and $\lieh$ decomposes as $\lieh = \lieh(1) \oplus \lieh(-1)$. Moreover, $\lieh(-1)$ is one-dimensional and spanned by $u \in \lieh_{\mathbb{R}}$ as the action of $s$ on $\lieh$ induces a reflection on $\lieh_{\mathbb{R}}$. It follows that $\lieh_{\mathbb{R}} = \lieh_{0} \oplus \lieh_{K}$ where $\lieh_{K}$ is the one-dimensional line in $\lieh_{\mathbb{R}}$ spanned by $u$. For $\alpha \in \lieh_{\mathbb{R}}$ the orthogonal projection of $\alpha$ onto the line $\lieh_{K}$ is 
$$
P\alpha = \frac{(\alpha,u^{\lambda})_{\lieh_{\mathbb{R}}}}{(u^{\lambda},u^{\lambda})_{\lieh_{\mathbb{R}}}}\hat{u}^{\lambda} = \frac{(\alpha,u^{\lambda})_{\lieh}}{(u^{\lambda},u^{\lambda})_{\lieh}}\hat{u}^{\lambda}
$$
which is equal to the orthogonal projection of $u^{\lambda}$ onto $\lieh(-1)$ in $\lieh$.\\
We now use the above method to verify that all elements of $\Delta\backslash\ol{\Delta}_{0}$ have non-zero projection onto the plane $\lieh_{K}$. Recall that it suffices to check this for the simple roots in $\Delta\backslash\ol{\Delta}_{0}$, i.e. for the roots $\alpha_{1}, \ldots, \alpha_{m+1}, \alpha_{m+p+1}, \ldots, \alpha_{m+p+m+1}$. Note that when $s$ is of the form given by (i) it follows that $l=2m+p+1$ which implies $\lambda = e^{\frac{2\pi i}{l'+1}} = e^{\frac{\pi i}{m+1}}$. It is easy to check that
\begin{align*}
(\alpha_{k}, v^{\lambda})_{\lieh} &= \left\{\begin{array}{ll}
2\sin\Big[\frac{(2m-k+2)\pi}{2(m+1)}\Big]e^{\frac{(3m+4)\pi i}{2(m+1)}}, & \text{if $1 \leq k \leq m$ and $k$ odd} \\
2\sin\Big[\frac{(2m-k+2)\pi}{2(m+1)}\Big]e^{\frac{\pi i}{2}}, & \text{if $1 \leq k \leq m$ and $k$ even} \\
e^{\frac{(3m+4)\pi i}{2(m+1)}}, & \text{if $k=m+1$},
\end{array}\right. \\
(\alpha_{m+p+k}, v^{\lambda})_{\lieh} &= \left\{\begin{array}{ll}
e^{\frac{(3m+4)\pi i}{2(m+1)}}, & \text{if $k=1$} \\
2\sin\Big[\frac{(m-k+2)\pi}{2(m+1)}\Big]e^{\frac{\pi i}{2}}, & \text{if $2 \leq k \leq m+1$ and $k$ even} \\
2\sin\Big[\frac{(m-k+4)\pi}{2(m+1)}\Big]e^{\frac{3(m+2)\pi i}{2(m+1)}}, & \text{if $2 \leq k \leq m+1$ and $k$ odd.}
\end{array}\right.
\end{align*}
When $m$ is non-zero 
$$
-\frac{\pi}{2} < \arg(\alpha_{k},v^{\lambda})_{\lieh} \leq \frac{\pi}{2}
$$ 
for all $\alpha_{k} \in \{\alpha_{1}, \ldots, \alpha_{m+1}, \alpha_{m+p+1}, \ldots, \alpha_{m+p+m+1}\}$ and thus all such $\alpha_{k}$ lie in a half-plane in $\mathbb{C}$. When $m$ is zero $(\alpha_{1}, v^{\lambda})_{\lieh} = (\alpha_{p+1}, v^{\lambda})_{\lieh} = 1$ and thus $P\alpha_{1}, P\alpha_{p+1}$ lie in the same half-line. The stated conjugacy class representative $s \in W$ therefore defines the given system of positive roots.
\end{proof}
\hfill

\subsection{} \label{sect: 4.4}
From this point onward an element of $W$ denoted by $s$ will be a product of two involutions $s=s_{1}s_{2}$ such that $s_{1} = s_{\gamma_{1}} \cdots s_{\gamma_{n}}$, $s_{2} = s_{\gamma_{n+1}} \cdots s_{\gamma_{l'}}$ and the sets $\{\gamma_{1}, \ldots, \gamma_{n}\}$, $\{\gamma_{n+1}, \ldots, \gamma_{l'}\}$ are given by one of the forms in Proposition \ref{prop:4.1a}. In particular, $s$ is a single cycle and a conjugacy class representative for the conjugacy class in $W$ of single cycles. Define $\Delta_{s}$ and $\Delta_{s^{-1}}$ to be the set of positive roots in $\Delta$ that become negative roots under the action of $s$ and $s^{-1}$, respectively. It was shown in \cite{Sev1} that $\Delta_{s} = s_{2}\Delta_{s_{1}} \cup \Delta_{s_{2}}$ and $\Delta_{s^{-1}} = \Delta_{s_{1}} \cup s_{1}\Delta_{s_{2}}$ where $\Delta_{s_{1}}$ and $\Delta_{s_{2}}$ are the positive roots in $\Delta$ that become negative roots under the action of $s_{1}$ and $s_{2}$, respectively. \\
It will prove very useful later to explicitly describe the set $\Delta_{s^{-1}}$ in each case of the above proposition. If $s$ of the form (i) in Proposition \ref{prop:4.1a} then 
\begin{align*}
\Delta_{s^{-1}} = \{ &\alpha_{1}+\alpha_{2}, \alpha_{2}+\alpha_{3}+\alpha_{4},\ldots, \alpha_{m-2}+\alpha_{m-1}+\alpha_{m}, \alpha_{m}+\alpha_{m+1}, \alpha_{m}+\alpha_{m+1}+\alpha_{m+2}, \\
&\ldots, \alpha_{m}+\cdots+\alpha_{m+p}, \alpha_{m}+\cdots+\alpha_{m+p+2}, \alpha_{m+2}+\cdots+\alpha_{m+p+2}, \\
&\alpha_{m+3}+\cdots+\alpha_{m+p+2},\ldots, \alpha_{m+p+2}, \alpha_{m+p+2}+\alpha_{m+p+3}+\alpha_{m+p+4}, \\
&\alpha_{m+p+4}+\alpha_{m+p+5}+\alpha_{m+p+6}, \ldots, \alpha_{m+p+m-2}+\alpha_{m+p+m-1}+\alpha_{m+p+m}, \\
&\alpha_{m+p+m}+\alpha_{m+p+m+1}, \alpha_{2}, \alpha_{4}, \ldots, \alpha_{m}, \alpha_{m+p+2}, \alpha_{m+p+4}, \ldots, \alpha_{m+p+m} \},
\end{align*}
if $s$ of the form (ii) in Proposition \ref{prop:4.1a} then 
\begin{align*}
\Delta_{s^{-1}} = \{ &\alpha_{1}+\alpha_{2}, \alpha_{2}+\alpha_{3}+\alpha_{4}, \ldots, \alpha_{m-2}+\alpha_{m-1}+\alpha_{m}, \alpha_{m}+\alpha_{m+1}, \alpha_{m}+\alpha_{m+1}+\alpha_{m+2}, \\
&\ldots, \alpha_{m}+\cdots+\alpha_{m+p}, \alpha_{m}+\cdots+\alpha_{m+p+2}, \alpha_{m+2}+\cdots+\alpha_{m+p+2}, \\ &\alpha_{m+3}+\cdots+\alpha_{m+p+2}, \ldots, \alpha_{m+p+2}, \alpha_{m+p+2}+\alpha_{m+p+3}+\alpha_{m+p+4}, \\
&\alpha_{m+p+4}+\alpha_{m+p+5}+\alpha_{m+p+6}, \ldots, \alpha_{m+p+m}+\alpha_{m+p+m+1}+\alpha_{m+p+m+2}, \\
&\alpha_{2}, \alpha_{4}, \ldots, \alpha_{m}, \alpha_{m+p+2}, \alpha_{m+p+4}, \ldots, \alpha_{m+p+m+2} \},
\end{align*}
if $s$ of the form (iii) in Proposition \ref{prop:4.1a} then 
\begin{align*}
\Delta_{s^{-1}} = \{ &\alpha_{1}+\alpha_{2}+\alpha_{3}, \alpha_{3}+\alpha_{4}+\alpha_{5}, \ldots, \alpha_{m-2}+\alpha_{m-1}+\alpha_{m}, \alpha_{m}+\alpha_{m+1}, \\
&\alpha_{m}+\alpha_{m+1}+\alpha_{m+2}, \ldots, \alpha_{m}+\cdots+\alpha_{m+p}, \alpha_{m}+\cdots+\alpha_{m+p+2}, \\ &\alpha_{m+2}+\cdots+\alpha_{m+p+2}, \alpha_{m+3}+\cdots+\alpha_{m+p+2}, \ldots, \alpha_{m+p+2}, \\ &\alpha_{m+p+2}+\alpha_{m+p+3}+\alpha_{m+p+4}, \alpha_{m+p+4}+\alpha_{m+p+5}+\alpha_{m+p+6}, \\
&\ldots, \alpha_{m+p+m-1}+\alpha_{m+p+m}+\alpha_{m+p+m+1}, \alpha_{1}, \alpha_{3}, \ldots, \alpha_{m}, \alpha_{m+p+2}, \\
&\alpha_{m+p+4}, \ldots, \alpha_{m+p+m+1} \},
\end{align*}
if $s$ of the form (iv) in Proposition \ref{prop:4.1a} then 
\begin{align*}
\Delta_{s^{-1}} = \{ &\alpha_{1}+\alpha_{2}+\alpha_{3}, \alpha_{3}+\alpha_{4}+\alpha_{5}, \ldots, \alpha_{m-2}+\alpha_{m-1}+\alpha_{m}, \alpha_{m}+\alpha_{m+1}, \\
&\alpha_{m}+\alpha_{m+1}+\alpha_{m+2}, \ldots, \alpha_{m}+\cdots+\alpha_{m+p}, \alpha_{m}+\cdots+\alpha_{m+p+2}, \\ &\alpha_{m+2}+\cdots+\alpha_{m+p+2}, \alpha_{m+3}+\cdots+\alpha_{m+p+2}, \ldots, \alpha_{m+p+2}, \\ &\alpha_{m+p+2}+\alpha_{m+p+3}+\alpha_{m+p+4}, \alpha_{m+p+4}+\alpha_{m+p+5}+\alpha_{m+p+6}, \\
&\ldots, \alpha_{m+p+m-1}+\alpha_{m+p+m}+\alpha_{m+p+m+1}, \alpha_{m+p+m+1}+\alpha_{m+p+m+2}\\
&\alpha_{1}, \alpha_{3}, \ldots, \alpha_{m}, \alpha_{m+p+2}, \alpha_{m+p+4}, \ldots, \alpha_{m+p+m+1} \}. 
\end{align*}
\hfill

\subsection{}
Define $\gamma' = \alpha_{m} + \cdots + \alpha_{m+p+2}$ and note that $\gamma' \in \Delta_{s^{-1}}$ for all $s$. Moreover, it is important to observe that each root in $\Delta_{s^{-1}}$ is at most the sum of two roots in $\Delta_{s^{-1}}$ and in fact the only root in $\Delta_{s^{-1}}$ that is the sum of two such roots is $\gamma'$. If we denote by $\mathcal{O}_{\gamma'}$ the orbit of $\gamma'$  in $\Delta$ with respect to the action of the group generated by $s$ then
\begin{align*}
\mathcal{O}_{\gamma'}\cap(\ol{\Delta}_{K})_{+} = \{ &\gamma', \alpha_{m-2}+\cdots+\alpha_{m+p+4}, \alpha_{m-4}+\cdots+\alpha_{m+p+6}, \ldots, \alpha_{2}+\cdots+\alpha_{m+p+m}, \\
&\alpha_{1}+\cdots+\alpha_{m+p+m+1}, \alpha_{3}+\cdots+\alpha_{m+p+m-1}, \alpha_{5}+\cdots+\alpha_{m+p+m-3}, \\
&\ldots, \alpha_{m-1}+\cdots+\alpha_{m+p+3}, \gamma \}
\end{align*}
when $s$ is of the form (i) in Proposition \ref{prop:4.1a},
\begin{align*}
\mathcal{O}_{\gamma'}\cap(\ol{\Delta}_{K})_{+} = \{ &\gamma', \alpha_{m-2}+\cdots+\alpha_{m+p+4}, \alpha_{m-4}+\cdots+\alpha_{m+p+6}, \ldots, \alpha_{2}+\cdots+\alpha_{m+p+m}, \\
&\alpha_{1}+\cdots+\alpha_{m+p+m+2}, \alpha_{3}+\cdots+\alpha_{m+p+m+1}, \alpha_{5}+\cdots+\alpha_{m+p+m-1}, \\
&\ldots, \alpha_{m-1}+\cdots+\alpha_{m+p+5}, \alpha_{m+1}+\cdots+\alpha_{m+p+3} \}
\end{align*}
when $s$ is of the form (ii) in Proposition \ref{prop:4.1a},
\begin{align*}
\mathcal{O}_{\gamma'}\cap(\ol{\Delta}_{K})_{+} = \{ &\gamma', \alpha_{m-2}+\cdots+\alpha_{m+p+4}, \alpha_{m-4}+\cdots+\alpha_{m+p+6}, \\
&\ldots, \alpha_{1}+\cdots+\alpha_{m+p+m+1}, \alpha_{2}+\cdots+\alpha_{m+p+m}, \alpha_{4}+\cdots+\alpha_{m+p+m-2}, \\ &\alpha_{6}+\cdots+\alpha_{m+p+m-4}, \ldots, \alpha_{m-1}+\cdots+\alpha_{m+p+3}, \gamma \}
\end{align*}
when $s$ is of the form (iii) in Proposition \ref{prop:4.1a} and
\begin{align*}
\mathcal{O}_{\gamma'}\cap(\ol{\Delta}_{K})_{+} = \{ &\gamma', \alpha_{m-2}+\cdots+\alpha_{m+p+4}, \alpha_{m-4}+\cdots+\alpha_{m+p+6}, \\
&\ldots, \alpha_{1}+\cdots+\alpha_{m+p+m+1}, \alpha_{2}+\cdots+\alpha_{m+p+m+2}, \alpha_{4}+\cdots+\alpha_{m+p+m}, \\ &\alpha_{6}+\cdots+\alpha_{m+p+m-2}, \ldots, \alpha_{m-1}+\cdots+\alpha_{m+p+5}, \\
&\alpha_{m+1}+\cdots+\alpha_{m+p+3} \}
\end{align*}
when $s$ is of the form (iv) in Proposition \ref{prop:4.1a}. The description of these sets will become useful in a moment. \\

\begin{figure} \label{fig:1}
\centering
\begin{tikzpicture}
\draw (-5,0) -- (5,0);
\draw (0:0) -- (20:5) (0:0) -- (40:5) (0:0) -- (60:5);
\draw[fill] (80:2.5) circle [radius=0.01];
\draw[fill] (90:2.5) circle [radius=0.01];
\draw[fill] (100:2.5) circle [radius=0.01];
\draw (0:0) -- (130:5) (0:0) -- (150:5) (0:0) -- (170:5);
\draw (1,0) arc (0:20:1);
\node at (10:5) {$\ol{\Delta}_{K}^{1}$};
\node at (30:5) {$\ol{\Delta}_{K}^{2}$};
\node at (50:5) {$\ol{\Delta}_{K}^{3}$};
\node at (140:5) {$\ol{\Delta}_{K}^{D-1}$};
\node at (160:5) {$\ol{\Delta}_{K}^{D}$};
\node at (175:5) {$\ol{\Delta}_{K}^{D+1}$};
\node at (10:1.3) {$\theta_{K}$};
\end{tikzpicture}
\caption{Sectors of the half-plane in $\lieh_{K}$ onto which the roots from the sets $\ol{\Delta}_{K}^{1}, \ldots, \ol{\Delta}_{K}^{D+1}$ are orthogonally projected.}
\end{figure}
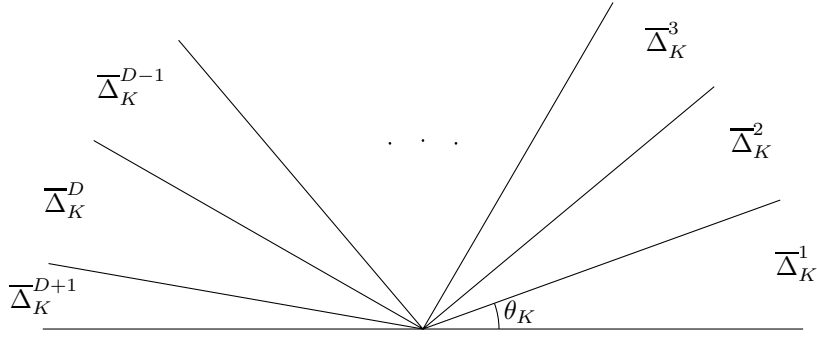

\subsection{}
As has already been described one can use the Killing form to identify $\lieh$ with $\lieh^{*}$ and orthogonally project all elements of $\ol{\Delta}_{K}$ onto the plane $\mathfrak{h}_{K}$. Now $\alpha \in (\ol{\Delta}_{K})_{+}$ if and only if $h_{K}(\alpha) > 0$ and all projected elements of $(\ol{\Delta}_{K})_{+}$ lie in a half-plane in $\mathfrak{h}_{K}$. Using the fact that $s$ acts on $\mathfrak{h}_{K}$ by rotation one can express $(\ol{\Delta}_{K})_{+}$ as the disjoint union
\begin{equation} \label{eq:4.1a}
(\ol{\Delta}_{K})_{+} = \bigcup_{k = 1}^{D+1} \ol{\Delta}_{K}^{k}
\end{equation}
where $\ol{\Delta}_{K}^{k} = \{ \alpha \in (\ol{\Delta}_{K})_{+} : s^{-1}\alpha, \ldots, s^{-(k-1)}\alpha \in (\ol{\Delta}_{K})_{+}, s^{-k}\alpha \in (\ol{\Delta}_{K})_{-} \}$ for each $1 \leq k \leq D+1$; see Figure 1. Let $d : (\ol{\Delta}_{K})_{+} \rightarrow \mathbb{Z}$ denote the function defined by the restriction $d\vert_{\ol{\Delta}_{K}^{k}} = k$. It will be of use to consider another partition of $(\ol{\Delta}_{K})_{+}$ given by
\begin{equation} \label{eq:4.1a1}
(\ol{\Delta}_{K})_{+} = \Delta^{C}\cup\Delta^{R}\cup\Delta^{O}
\end{equation}
where $\Delta^{C} = \{ \alpha \in (\ol{\Delta}_{K})_{+} : \text{row}(s\alpha) = \text{row}(\alpha) \}$, $\Delta^{R} = \{ \alpha \in (\ol{\Delta}_{K})_{+} : \text{col}(s\alpha) = \text{col}(\alpha) \}$ and $\Delta^{O} = (\ol{\Delta}_{K})_{+}\backslash(\Delta^{C}\cup\Delta^{R})$ and a partition of $\Delta^{O}$ given by
\begin{equation*}
\Delta^{O} = \Delta_{1}^{O}\cup\Delta_{2}^{O}\cup\Delta_{3}^{O}
\end{equation*}
where $\Delta_{1}^{O} = \{ \alpha \in \Delta^{O} : \text{row}(\alpha) \geq m+p+2 \}$, $\Delta_{2}^{O} = \{ \alpha \in \Delta^{O} : \text{col}(\alpha) \leq m+1 \}$ and $\Delta_{3}^{O} = \Delta^{O} \backslash (\Delta_{1}^{O}\cup\Delta_{2}^{O})$. Finally, denote by $\Delta_{Z}$ the root system of $Z$ and partition $\Delta_{Z}$ by
\begin{equation} \label{eq:4.1b}
\Delta_{Z} = \Delta_{Z}^{m+2}\cup\cdots\cup\Delta_{Z}^{m+p+1}
\end{equation}
where $\Delta_{Z}^{i} = \{ \alpha \in \Delta_{Z} : \text{col}(\alpha) = i \}$, for each $m+2 \leq i \leq m+p+1$. \\

\subsection{}
With the above subsets of $(\ol{\Delta}_{K})_{+}$ now defined, before we proceed, we observe that $\mathcal{O}_{\gamma'}\cap(\ol{\Delta}_{K})_{+} \subset \Delta_{3}^{O}$ and if $\eta_{1}, \eta_{2} \in \Delta_{s^{-1}}$ such that $\gamma' = \eta_{1}+\eta_{2}$ then, without loss of generality, $\eta_{1} \in \Delta_{s^{-1}}\cap\Delta^{C}$ and $\eta_{2} \in \Delta_{s^{-1}}\cap\Delta^{R}$. We now prove some basic properties concerning the roots in $(\ol{\Delta}_{K})_{+}\cup\Delta_{Z}$.

\begin{lemma} \label{lemma:4.1a}
Let $1 \leq k \leq D+1$, $\alpha \in \Delta_{Z}$, $\beta_{1},\beta_{2} \in \Delta^{C}$, $\delta_{1}, \delta_{2} \in \Delta^{R}$, $\eta \in \Delta^{O}$, $\beta \in \ol{\Delta}_{K}^{k} \cap \Delta^{C}$ and $ \delta \in \ol{\Delta}_{K}^{k} \cap \Delta^{R}$. One has
\begin{itemize}
\item[(i)] $d(\beta_{1}) = d(\beta_{2})$ if and only if $\text{col}(\beta_{1}) = \text{col}(\beta_{2})$,
\item[(ii)] $d(\delta_{1}) = d(\delta_{2})$ if and only if $\text{row}(\delta_{1}) = \text{row}(\delta_{2})$,
\item[(iii)] $\alpha + \beta \in \Delta$ implies $\alpha + \beta \in \ol{\Delta}_{K}^{k} \cap \Delta^{C}$,
\item[(iv)] $\alpha + \delta \in \Delta$ implies $\alpha + \delta \in \ol{\Delta}_{K}^{k} \cap \Delta^{R}$,
\item[(v)] $\alpha + \eta \notin \Delta$.
\end{itemize}
\end{lemma}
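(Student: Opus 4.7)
The plan is to exploit the matrix-position description of roots in type $A_l$: $E_\alpha = E_{\text{row}(\alpha),\,\text{col}(\alpha)}$, and the Weyl element $s$ acts on root spaces by simultaneous permutation of row and column under $\alpha \mapsto s\alpha$. The crucial observation I would first establish is that, in each case of Proposition~\ref{prop:4.1a}, the $s$-fixed indices in $\{1,\ldots,l+1\}$ coincide with the Levi indices $\{m+2,\ldots,m+p+1\}$, so the nontrivial cycle of $s$ is supported on $L \cup U$ with $L = \{1,\ldots,m+1\}$ and $U = \{m+p+2,\ldots,l+1\}$. This yields the characterizations I would record at the start: $\alpha \in \Delta_Z$ iff $\text{row}(\alpha),\text{col}(\alpha)$ both lie in the fixed set; a positive $\beta \in \Delta^C$ has row in the fixed set and column in $U$; a positive $\delta \in \Delta^R$ has column in the fixed set and row in $L$; a positive $\eta \in \Delta^O$ has both row and column in $L \cup U$.

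Part (v) is then immediate: in $A_l$, $\alpha + \eta \in \Delta$ requires $\text{col}(\alpha) = \text{row}(\eta)$ or $\text{col}(\eta) = \text{row}(\alpha)$, but $\text{row}(\alpha),\text{col}(\alpha)$ lie in the fixed set while $\text{row}(\eta),\text{col}(\eta)$ lie in the cycle---disjoint sets. For (iii) and (iv) I would split on whether $\alpha \in \Delta_Z$ is positive or negative and apply the chain rule for root sums in type $A$. In each of the two possibilities coming from the chain rule, one is always ruled out by a cycle-versus-fixed-index clash (e.g.~the clause $\text{row}(\alpha) = \text{col}(\beta)$ contradicts $\text{col}(\beta) \in U$); the surviving possibility yields $\alpha + \beta$ with row in the fixed set and column equal to $\text{col}(\beta)$, placing it in $\Delta^C \cap \ol{\Delta}_K$. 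The depth statement $d(\alpha + \beta) = k$ then uses the $s$-fixedness of $\alpha$, which gives $s^{-i}(\alpha + \beta) = \alpha + s^{-i}\beta$, together with the reformulation that $s^{-i}\beta$ is positive iff $s^{-i}(\text{col}(\beta)) \in U$; this condition depends on $\text{col}(\beta)$ alone, not on the particular fixed row of $\beta$, so the depth is preserved.

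This same reformulation shows that $d(\beta)$ depends only on $\text{col}(\beta)$, giving the forward direction of (i). The converse reduces to injectivity of the map $q \mapsto d(q) = \min\{k \geq 1 : s^{-k}(q) \in L\}$ on $U$. This rests on the structural claim that, in each of the four configurations of Proposition~\ref{prop:4.1a}, the cyclic orbit of $s$ on $L \cup U$ traverses one contiguous arc of $U$-indices followed by one contiguous arc of $L$-indices; granted this, iterating $s^{-1}$ from any $q \in U$ stays in $U$ until it first lands in $L$, and $d$ takes the distinct values $1, 2, \ldots, |U|$ on the $|U|$ elements of $U$. Part (ii) is symmetric to (i), with the roles of row/column and of $L$/$U$ exchanged.

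The main obstacle is the single-arc structural claim underpinning the injectivity in (i) and (ii). I would verify it by direct case analysis on (i)--(iv) of Proposition~\ref{prop:4.1a}, computing the permutation $s = s_1 s_2$ on the non-fixed indices and checking that iterating $s^{-1}$ from any element of $U$ stays within $U$ until first crossing into $L$. This is consistent with, and can be cross-checked against, the explicit orbit descriptions $\mathcal{O}_{\gamma'} \cap (\ol{\Delta}_K)_+$ enumerated in the preceding subsection.
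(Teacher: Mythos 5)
Your proposal is correct, and its skeleton coincides with the paper's on three of the five parts: (v) is proved exactly as you do it (fixed Levi indices versus moving cycle indices are disjoint, so no chain condition can hold), and (iii)/(iv) are proved from $s^{-(k-1)}(\alpha+\beta)=\alpha+s^{-(k-1)}\beta$ plus the observation that $\mathrm{col}(\alpha+\beta)=\mathrm{col}(\beta)$ --- though the paper then gets the depth statement by invoking (i), where you re-run the first-passage argument directly. The real difference is in (i)/(ii). The paper does not argue through the cycle structure of the permutation at all: it reads the needed fact off the explicit lists of $\Delta_{s^{-1}}$ recorded in \ref{sect: 4.4}, namely that all roots of $\Delta_{s^{-1}}\cap\Delta^{C}$ share one column (they are the roots $\alpha_{i}+\cdots+\alpha_{m+p+2}$, of column $m+p+3$) and all roots of $\Delta_{s^{-1}}\cap\Delta^{R}$ share one row; then $d(\beta_{1})=d(\beta_{2})=k$ places $s^{-(k-1)}\beta_{1},s^{-(k-1)}\beta_{2}$ in $\Delta_{s^{-1}}\cap\Delta^{C}$, and injectivity of the permutation on column indices gives $\mathrm{col}(\beta_{1})=\mathrm{col}(\beta_{2})$. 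Your single-arc claim --- that the cycle of $s$ on $L\cup U$ crosses from $U$ into $L$ exactly once --- is precisely equivalent to that constant-column/constant-row observation (a second crossing would produce two distinct columns among $\Delta_{s^{-1}}\cap\Delta^{C}$ whenever $p\geq 1$; for $p=0$ the lemma is vacuous since $\Delta_{Z}=\Delta^{C}=\Delta^{R}=\emptyset$), so the two proofs rest on the same combinatorial fact about the representatives of Proposition \ref{prop:4.1a}. What your route buys: the reformulation $d(\beta)=\min\{k\geq 1 : s^{-k}(\mathrm{col}(\beta))\in L\}$ makes the ``col equal implies $d$ equal'' direction and the depth claims in (iii)/(iv) completely transparent, whereas the paper's corresponding step (``there exists some $k$ such that $s^{-(k-1)}\beta_{1},s^{-(k-1)}\beta_{2}\in\Delta_{s^{-1}}$'') is terse and implicitly uses exactly your observation that positivity of $s^{-j}\beta$ depends only on the column trajectory. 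What it costs: you must establish the arc structure by a four-case computation of the permutation $s=s_{1}s_{2}$, work the paper has effectively already banked in the (asserted, unproved) explicit lists of \ref{sect: 4.4}.
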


\begin{proof}
For (i), suppose first that $d(\beta_{1}) = d(\beta_{2}) = k$. Recall the set $\Delta_{s^{-1}}$ whose elements are the positive roots that become negative roots under the action of $s^{-1}$ and observe from the descriptions of $\Delta_{s^{-1}}$ given in \ref{sect: 4.4} that $\text{col}(\eta_{1}) = \text{col}(\eta_{2})$ for all $\eta_{1},\eta_{2} \in \Delta_{s^{-1}} \cap \Delta^{C}$. It follows from our supposition that $s^{-(k-1)}\beta_{1}, s^{-(k-1)}\beta_{2} \in \Delta_{s^{-1}}\cap\Delta^{C}$ which implies $\text{col}(s^{-(k-1)}\beta_{1}) = \text{col}(s^{-(k-1)}\beta_{2})$ and thus $\text{col}(\beta_{1}) = \text{col}(\beta_{2})$. Conversely, if $\text{col}(\beta_{1}) = \text{col}(\beta_{2})$ then there exists some $k \in \mathbb{Z}_{\geq0}$ such that $s^{-(k-1)}\beta_{1}, s^{-(k-1)}\beta_{2} \in \Delta_{s^{-1}}$ which implies $d(\beta_{1}) = d(\beta_{2})$. For (ii), observe that $\text{row}(\eta_{1}) = \text{row}(\eta_{2})$ for all $\eta_{1}, \eta_{2} \in \Delta_{s^{-1}}\cap\Delta^{R}$. The result now follows by using a similar argument to (i). \\
For (iii), let $\alpha \in \Delta_{Z}$ and $\beta \in \ol{\Delta}_{K}^{k}\cap\Delta^{C}$ such that $\alpha + \beta \in \Delta$. Then $s^{-(k-1)}(\alpha + \beta) = \alpha + s^{-(k-1)}\beta$, thus $\text{row}(\alpha + \beta) = \text{row}(s^{-(k-1)}(\alpha+\beta))$ which implies $\alpha + \beta \in \Delta^{C}$. Moreover, observe that $\text{col}(\alpha+\beta) = \text{col}(\beta)$ which implies $d(\alpha+\beta) = k$ by (i). One can use a similar argument to deduce (iv). \\
Finally, as $\text{row}(\alpha) \neq \text{col}(\eta)$ or $\text{col}(\alpha) \neq \text{row}(\eta)$ for any $\alpha \in \Delta_{Z}$, $\eta \in \Delta^{O}$ it follows that $\alpha + \eta \notin \Delta$ which proves (v).
\end{proof}

\begin{lemma} \label{lemma:4.1b}
Let $1 \leq k \leq D+1$ and $\beta, \delta \in \ol{\Delta}_{K}^{k}$. If $\beta + \delta \in \Delta$ then $\beta \in \Delta^{C}$, $\delta \in \Delta^{R}$ and $\beta+\delta \in \Delta_{3}^{O}$.
\end{lemma}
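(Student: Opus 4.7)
The plan is to reduce everything to the two observations recorded immediately before the lemma: that $\gamma'$ is the unique element of $\Delta_{s^{-1}}$ expressible as the sum of two elements of $\Delta_{s^{-1}}$, with any such decomposition splitting (up to relabelling) into one summand in $\Delta^{C}\cap\Delta_{s^{-1}}$ and one in $\Delta^{R}\cap\Delta_{s^{-1}}$; and that $\mathcal{O}_{\gamma'}\cap(\ol{\Delta}_{K})_{+} \subset \Delta_{3}^{O}$.

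First I will translate the hypothesis $\beta,\delta\in\ol{\Delta}_{K}^{k}$ into the statement that $s^{-(k-1)}\beta$ and $s^{-(k-1)}\delta$ lie in $\Delta_{s^{-1}}$, which is immediate from the definition of $\ol{\Delta}_{K}^{k}$. I then check that $s^{-(k-1)}(\beta+\delta)$ likewise lies in $\Delta_{s^{-1}}$: it is a root because $\beta+\delta$ is; equalling $s^{-(k-1)}\beta+s^{-(k-1)}\delta$, it is positive as a sum of two positive roots; and its image under a further $s^{-1}$ is $s^{-k}\beta+s^{-k}\delta$, the sum of two negative roots, hence negative. The uniqueness part of the first observation then forces $s^{-(k-1)}(\beta+\delta)=\gamma'$, and the structural part of that observation lets me relabel $\beta\leftrightarrow\delta$ if necessary so that $s^{-(k-1)}\beta\in\Delta^{C}\cap\Delta_{s^{-1}}$ and $s^{-(k-1)}\delta\in\Delta^{R}\cap\Delta_{s^{-1}}$.

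Next I transport these memberships back along $s^{k-1}$. Because $s$ acts on $e_{a}-e_{b}\in\Delta$ by $e_{s(a)}-e_{s(b)}$, one has $\text{row}(s\alpha)=s(\text{row}(\alpha))$ independently of the sign of $s\alpha$, so membership in $\Delta^{C}$ is equivalent to $\text{row}(\alpha)$ being an $s$-fixed index. Setting $a_{0}=\text{row}(s^{-(k-1)}\beta)$, which is $s$-fixed by the previous paragraph, yields $\text{row}(\beta)=s^{k-1}(a_{0})=a_{0}$, and since $\beta\in(\ol{\Delta}_{K})_{+}$ this gives $\beta\in\Delta^{C}$. The identical argument with $\text{col}$ in place of $\text{row}$ shows $\delta\in\Delta^{R}$.

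Finally, $\beta+\delta=s^{k-1}\gamma'$ lies in $\mathcal{O}_{\gamma'}$, and since the orbit of a non-fixed root consists of non-fixed roots, $\beta+\delta$ is in $\ol{\Delta}_{K}$; being positive as a sum of two positive roots, it lies in $\mathcal{O}_{\gamma'}\cap(\ol{\Delta}_{K})_{+}$, and the second observation then yields $\beta+\delta\in\Delta_{3}^{O}$. The only point demanding real care along this route is checking that $s^{-(k-1)}(\beta+\delta)$ actually lands in $\Delta_{s^{-1}}$ so that the uniqueness of $\gamma'$ can be invoked; the remaining steps are mechanical unwindings of the definitions together with the $s$-equivariance of $\text{row}$ and $\text{col}$.
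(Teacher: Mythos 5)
Your proof is correct and follows essentially the same route as the paper's: apply $s^{-(k-1)}$ to land all three roots in $\Delta_{s^{-1}}$, invoke the uniqueness of $\gamma'$ as the only element of $\Delta_{s^{-1}}$ that is a sum of two elements of $\Delta_{s^{-1}}$ together with the $\Delta^{C}$/$\Delta^{R}$ splitting of its summands, and transport back along $s^{k-1}$. The paper's version is terser, leaving implicit exactly the steps you spell out (that $s^{-(k-1)}(\beta+\delta)\in\Delta_{s^{-1}}$, the $s$-equivariance of $\mathrm{row}$ and $\mathrm{col}$, and the orbit observation giving $\beta+\delta\in\Delta_{3}^{O}$), so your write-up is a faithful, more detailed rendering of the same argument.
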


\begin{proof}
If $\beta, \delta \in \ol{\Delta}_{K}^{k}$ such that $\beta+\delta \in \Delta$ then $\beta+\delta \in \ol{\Delta}_{K}^{k}$ and thus $s^{-(k-1)}\beta$, $s^{-(k-1)}\delta$, $s^{-(k-1)}\beta+s^{-(k-1)}\delta \in \Delta_{s^{-1}}$. This forces $s^{-(k-1)}\beta, s^{-(k-1)}\delta \in \Delta^{R}\cup\Delta^{C}$ and without loss of generality one can assume $s^{-(k-1)}\beta \in \Delta^{C}$ and $s^{-(k-1)}\delta \in \Delta^{R}$. The result follows.
\end{proof}

\begin{lemma} \label{lemma:4.1c}
Let $2 \leq k \leq D+1$, $\eta \in \ol{\Delta}_{K}^{k} \cap (\Delta^{C} \cup \Delta^{R})$ and $\eta_{1}, \eta_{2} \in (\ol{\Delta}_{K})_{+}$ such that $\eta = \eta_{1} + \eta_{2}$, then $\eta_{1},\eta_{2} \notin \ol{\Delta}_{K}^{k}$.
\end{lemma}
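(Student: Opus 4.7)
Assume by symmetry that $\eta \in \Delta^{C}$; the case $\eta \in \Delta^{R}$ is handled analogously using Lemma~\ref{lemma:4.1a}(ii). Writing $\eta = \alpha_{i} + \cdots + \alpha_{j}$, the row $i = \text{row}(\eta)$ lies in the fixed set $\{m+2,\ldots,m+p+1\}$ of $s$. In type $A$, any decomposition $\eta = \eta_{1} + \eta_{2}$ into two positive roots must take the form $\eta_{1} = \alpha_{i} + \cdots + \alpha_{k'}$, $\eta_{2} = \alpha_{k'+1} + \cdots + \alpha_{j}$ for some $i \leq k' < j$. Since $\eta_{1} \in (\ol\Delta_{K})_{+}$ shares the fixed row $i$ with $\eta$, its column $k'+1$ must be non-fixed, so $\eta_{1} \in \Delta^{C}$; the inequalities $k' \geq i \geq m+2$ together with $k'+1 \notin \{m+2,\ldots,m+p+1\}$ force $k'+1 \geq m+p+2$, hence $\eta_{2} \in \Delta^{O}$.

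\emph{Step 1: $\eta_{1} \notin \ol\Delta_{K}^{k}$.} Were $\eta_{1} \in \ol\Delta_{K}^{k}$, then $\eta, \eta_{1} \in \Delta^{C}$ would share level $k$ and Lemma~\ref{lemma:4.1a}(i) would yield $\text{col}(\eta_{1}) = \text{col}(\eta)$, i.e.\ $k' = j$, which collapses $\eta_{2}$.

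\emph{Step 2: $\eta_{2} \notin \ol\Delta_{K}^{k}$.} Assuming the contrary, apply $s^{-(k-1)}$ to $\eta = \eta_{1} + \eta_{2}$: both $s^{-(k-1)}\eta$ and $s^{-(k-1)}\eta_{2}$ lie in $\Delta_{s^{-1}}$, and since $s$ preserves the fixed row, $s^{-(k-1)}\eta \in \Delta^{C} \cap \Delta_{s^{-1}}$. The explicit description in Section~\ref{sect: 4.4} pins this down to $\alpha_{i} + \cdots + \alpha_{m+p+2}$, so in particular $s^{-(k-1)}\eta \neq \gamma'$. I would then split on the sign of $s^{-(k-1)}\eta_{1}$. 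In the positive subcase, row-matching in the type-$A$ decomposition of $\alpha_{i} + \cdots + \alpha_{m+p+2}$ forces $s^{-(k-1)}\eta_{2} = \alpha_{t+1} + \cdots + \alpha_{m+p+2}$ for some $t$, and enumerating the options for this root inside $\Delta_{s^{-1}}$ (an element of $\Delta^{C}$, the root $\gamma'$, or the simple root $\alpha_{m+p+2}$) yields a contradiction in each: the first pulls $\eta_{2}$ back into $\Delta^{C}$, clashing with $\eta_{2} \in \Delta^{O}$; the second forces the absurd identity $s^{-(k-1)}\eta_{1} = \alpha_{i} + \cdots + \alpha_{m-1}$ with $i \geq m+2$; and the third forces $\eta_{1} = s^{k-1}(\alpha_{i} + \cdots + \alpha_{m+p+1})$, whose positivity fails because the explicit cycle structure of $s$ in the proof of Proposition~\ref{prop:4.1a} places $s^{k-1}(m+p+2)$ in $\{1,\ldots,m+1\}$ for every admissible $k \in \{2,\ldots,D+1\}$. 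In the negative subcase, rearranging gives $s^{-(k-1)}\eta_{2} = s^{-(k-1)}\eta + (-s^{-(k-1)}\eta_{1})$ as a sum of two positive roots, and a short verification that $-s^{-(k-1)}\eta_{1} \in \Delta_{s^{-1}}$ combined with the observation preceding Lemma~\ref{lemma:4.1a} --- that $\gamma'$ is the unique element of $\Delta_{s^{-1}}$ expressible as a sum of two roots in $\Delta_{s^{-1}}$ --- forces $s^{-(k-1)}\eta_{2} = \gamma'$, again contradicting $\eta_{2} \in \Delta^{O}$.

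\emph{Main obstacle.} Step 1 is essentially immediate from Lemma~\ref{lemma:4.1a}(i); the bulk of the work and the principal obstacle is in Step 2, which requires working through the cycle structure of $s$ across all four forms (i)--(iv) of Proposition~\ref{prop:4.1a} in order to verify the positivity claim on $s^{k-1}(m+p+2)$ and to justify the negative-sign subcase.
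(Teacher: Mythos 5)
Your reduction, Step 1, and the positive subcase of Step 2 are correct and essentially reproduce the paper's argument (the paper likewise reduces to $\eta_{1} \in \Delta^{C}$, $\eta_{2} \in \Delta_{1}^{O}$, disposes of $\eta_{1}$ with Lemma \ref{lemma:4.1a}(i), and forces $s^{-(k-1)}\eta_{2} \in \{\alpha_{m+p+2}, \gamma'\}$; for the $\alpha_{m+p+2}$ option it notes $s^{-(k-1)}\eta_{1} \in \Delta_{s} \cap \Delta^{C}$ and deduces $\eta_{1} \in \Delta_{-}$, which amounts to your orbit computation run backwards). The genuine gap is in your negative subcase. The ``short verification that $-s^{-(k-1)}\eta_{1} \in \Delta_{s^{-1}}$'' cannot be carried out: that membership says exactly that $s^{-k}\eta_{1} \in \Delta_{+}$, and the hypotheses imply the opposite. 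Since $s$ acts on $\lieh_{K}$ by rotation through $2\pi/(l'+1)$ and no root of $\ol{\Delta}_{K}$ projects onto the boundary of the positive half-plane, the signs of the sequence $s^{-t}\eta_{1}$ occur in consecutive runs of length $\lfloor (l'+1)/2 \rfloor$ or $\lceil (l'+1)/2 \rceil$. As $\eta_{1} \in (\ol{\Delta}_{K})_{+}$ and $s^{-(k-1)}\eta_{1} \in \Delta_{-}$, the index $k-1$ lies in the first negative run, which starts at some $d \geq 1$ and ends no earlier than $d + D \geq k$ in cases (i) and (iii) (where the run length is exactly $D+1$), and no earlier than $k$ for every $k \leq D$ in cases (ii) and (iv); hence $s^{-k}\eta_{1} \in \Delta_{-}$, i.e.\ $-s^{-(k-1)}\eta_{1} \notin \Delta_{s^{-1}}$, and the uniqueness-of-$\gamma'$ observation (which needs both summands to lie in $\Delta_{s^{-1}}$) is not applicable. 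The repair is to work directly with $s^{-(k-1)}\eta_{2}$, which is known to lie in $\Delta_{s^{-1}}$: it contains $s^{-(k-1)}\eta = \alpha_{i}+\cdots+\alpha_{m+p+2}$ as a subsum, so it is either $\alpha_{c}+\cdots+\alpha_{m+p+2}$ with $c < i$ or $\alpha_{i}+\cdots+\alpha_{c}$ with $c > m+p+2$; the lists in \ref{sect: 4.4} exclude the second form (every element of $\Delta_{s^{-1}}$ with fixed row has column $m+p+3$), and in the first form $c \in \{m+2,\ldots,i-1\}$ would make $s^{-(k-1)}\eta_{1}$ the negative of a root of $\Delta_{Z}$, hence fixed by $s$, forcing $\eta_{1} = s^{-(k-1)}\eta_{1} \in \Delta_{-}$, a contradiction; so $c = m$ and $s^{-(k-1)}\eta_{2} = \gamma'$.

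There is also a smaller misstatement at the end of the negative subcase: $s^{-(k-1)}\eta_{2} = \gamma'$ does not contradict $\eta_{2} \in \Delta^{O}$, since it gives $\eta_{2} \in \mathcal{O}_{\gamma'} \cap (\ol{\Delta}_{K})_{+} \subset \Delta_{3}^{O} \subset \Delta^{O}$. The actual contradiction is with the sharper fact from your setup, namely $\text{row}(\eta_{2}) = k'+1 \geq m+p+2$ (i.e.\ $\eta_{2} \in \Delta_{1}^{O}$): every element of $\mathcal{O}_{\gamma'} \cap (\ol{\Delta}_{K})_{+}$ has row at most $m+1$. With these two points repaired your proof coincides with the paper's; as written, the negative subcase does not go through.
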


\begin{proof}
Let $\eta \in \ol{\Delta}_{K}^{k}\cap\Delta^{C}$. If $\eta=\eta_{1}+\eta_{2}$ then $\eta_{1},\eta_{2}\in\Delta^{C}\cup\Delta_{1}^{O}$ and can assume, without loss of generality, that $\eta_{1} \in \Delta^{C}$ and $\eta_{2} \in \Delta_{1}^{O}$. As $\eta_{1} \in \Delta^{C}$ implies $\text{col}(\eta) \neq \text{col}(\eta_{1})$ thus $\eta_{1} \notin \ol{\Delta}_{K}^{k}$ by Lemma \ref{lemma:4.1a} (i). Suppose $\eta_{2} \in \ol{\Delta}_{K}^{k}$, then $s^{-(k-1)}\eta = s^{-(k-1)}\eta_{1}+s^{-(k-1)}\eta_{2} \in \Delta_{s^{-1}} \cap \Delta^{C}$ and $s^{-(k-1)}\eta_{2} \in \Delta_{s^{-1}}$. As $s^{-(k-1)}\eta \in \Delta^{C}$ forces $s^{-(k-1)}\eta_{2} \in \{ \alpha_{m+p+2}, \gamma' \}$. If $s^{-(k-1)}\eta_{2} = \alpha_{m+p+2}$ then $s^{-(k-1)}\eta_{1} \in \Delta_{s}\cap\Delta^{C}$ which implies $\eta_{1} \in \Delta_{-}$, a contradiction. If $s^{-(k-1)}\eta_{2} = \gamma'$ then $\eta_{2} \in \mathcal{O}_{\gamma'} \cap (\ol{\Delta}_{K})_{+}$, a contradiction as $\mathcal{O}_{\gamma'} \cap (\ol{\Delta}_{K})_{+} \subset \Delta_{3}^{O}$. A similar argument can be used in the case $\eta \in \ol{\Delta}_{K}^{k} \cap \Delta^{R}$.
\end{proof}

\begin{lemma} \label{lemma:4.1d}
Let $2 \leq k \leq D+1$, $\eta, \eta_{1} \in \ol{\Delta}_{K}^{k}$ and $\eta_{2} \in (\ol{\Delta}_{K})_{+}$ such that $\eta = \eta_{1} + \eta_{2}$, then $\eta \in \Delta^{O}$.
\end{lemma}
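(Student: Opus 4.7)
The plan is to derive Lemma \ref{lemma:4.1d} essentially as a direct contrapositive of Lemma \ref{lemma:4.1c}, which has just been established. The structure of the statements suggests this: Lemma \ref{lemma:4.1c} begins with $\eta \in \ol{\Delta}_{K}^{k} \cap (\Delta^{C} \cup \Delta^{R})$ and concludes $\eta_{1},\eta_{2} \notin \ol{\Delta}_{K}^{k}$, whereas Lemma \ref{lemma:4.1d} begins with $\eta_{1} \in \ol{\Delta}_{K}^{k}$ and concludes $\eta \in \Delta^{O} = (\ol{\Delta}_{K})_{+} \setminus (\Delta^{C} \cup \Delta^{R})$. The two statements are logically equivalent once one checks that the ambient hypotheses match.

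Concretely, I would argue by contradiction. Suppose $\eta \notin \Delta^{O}$, i.e.\ $\eta \in \Delta^{C} \cup \Delta^{R}$. Combined with the standing assumption $\eta \in \ol{\Delta}_{K}^{k}$, this places $\eta$ in $\ol{\Delta}_{K}^{k} \cap (\Delta^{C} \cup \Delta^{R})$. Next I need to verify the remaining hypothesis of Lemma \ref{lemma:4.1c}, namely that both summands lie in $(\ol{\Delta}_{K})_{+}$. This is immediate: $\eta_{2} \in (\ol{\Delta}_{K})_{+}$ by assumption, and $\eta_{1} \in \ol{\Delta}_{K}^{k} \subseteq (\ol{\Delta}_{K})_{+}$ by the very definition of $\ol{\Delta}_{K}^{k}$ in (\ref{eq:4.1a}). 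Lemma \ref{lemma:4.1c} then forces $\eta_{1} \notin \ol{\Delta}_{K}^{k}$, contradicting the hypothesis $\eta_{1} \in \ol{\Delta}_{K}^{k}$.

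Since there is no genuine obstacle here beyond recognising this inclusion and applying the preceding lemma, the proof should be short. The only subtlety worth spelling out is why $\eta_{1}$ qualifies as one of the summands in the hypothesis of Lemma \ref{lemma:4.1c}; this is just the observation $\ol{\Delta}_{K}^{k} \subseteq (\ol{\Delta}_{K})_{+}$ from the definition of the partition in (\ref{eq:4.1a}). No further case analysis across the four forms (i)--(iv) of Proposition \ref{prop:4.1a} is required, because all of that work has already been absorbed into Lemma \ref{lemma:4.1c}.
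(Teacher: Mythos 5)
Your proposal is correct and takes essentially the same route as the paper: the paper's entire proof of this lemma is the single line ``Follows from Lemma \ref{lemma:4.1c},'' and your contrapositive argument (if $\eta \in \Delta^{C} \cup \Delta^{R}$ then Lemma \ref{lemma:4.1c} forces $\eta_{1} \notin \ol{\Delta}_{K}^{k}$, a contradiction) is exactly the detail the paper leaves implicit. Nothing further is needed.
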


\begin{proof}
Follows from Lemma \ref{lemma:4.1c}. 
\end{proof}

\begin{lemma} \label{lemma:4.1e}
There exists two distinct orbits $\mathcal{O}_{1}, \mathcal{O}_{2}$ in $(\ol{\Delta}_{K})_{+}$ with respect to the action of $\langle s \rangle$ such that if $\eta \in (\mathcal{O}_{1}\cup\mathcal{O}_{2})\cap(\ol{\Delta}_{K})_{+}$ and $\eta = \eta_{1}+\eta_{2}$ then $\eta_{1}, \eta_{2} \in \Delta_{s}\cup\Delta_{s^{-1}}$. Moreover, $\{ \eta_{1}, \eta_{2} \} \nsubseteq \Delta_{s}$ and $\{ \eta_{1}, \eta_{2} \} \nsubseteq \Delta_{s^{-1}}$.
\end{lemma}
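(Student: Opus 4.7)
The strategy is to exhibit the two orbits explicitly and verify the stated decomposition property by a direct case-by-case analysis using the explicit lists of $\Delta_{s^{-1}}$ and $\mathcal{O}_{\gamma'}\cap(\ol{\Delta}_{K})_{+}$ from Section~\ref{sect: 4.4}. The observation just before the lemma shows that $\gamma'$ is the unique element of $\Delta_{s^{-1}}$ decomposing as a sum of two $\Delta_{s^{-1}}$-roots, so $\mathcal{O}_{\gamma'}$ itself fails the conclusion; the natural candidates for $\mathcal{O}_{1}$ and $\mathcal{O}_{2}$ are instead the $\langle s \rangle$-orbits of the extreme simple roots $\alpha_{1}$ and $\alpha_{l}$, which are distinct since they contain distinct simple roots.

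To make this precise, in each of the four cases of Proposition~\ref{prop:4.1a} the permutation $s=s_{1}s_{2}$ of $\{1,\dots,l+1\}$ is a single $(l'+1)$-cycle whose explicit form is read off from the lists of reflections defining $s_{1}$ and $s_{2}$. I would iterate $s$ on $\alpha_{1}=e_{1}-e_{2}$ and $\alpha_{l}=e_{l}-e_{l+1}$ and record those iterates that remain positive roots; these constitute the positive parts of $\mathcal{O}_{1}$ and $\mathcal{O}_{2}$, and each consists of a small collection of short roots together with a single longer root passing through indices adjacent to the fixed interval $\{m+2,\dots,m+p+1\}$. Each positive root $\eta = \alpha_{i}+\cdots+\alpha_{j-1}$ of either orbit admits only the decompositions $\eta=(\alpha_{i}+\cdots+\alpha_{k})+(\alpha_{k+1}+\cdots+\alpha_{j-1})$ for $i\leq k\leq j-2$ in type A, so for each $\eta$ and each such $k$ I use the list of $\Delta_{s^{-1}}$ from Section~\ref{sect: 4.4} together with the equivalence $\alpha\in\Delta_{s}\iff s(\alpha)\in\Delta_{-}$ to determine whether each summand lies in $\Delta_{s^{-1}}$ or $\Delta_{s}$, and to verify that exactly one summand lies in each set.

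The main obstacle is purely combinatorial: completing the bookkeeping across all four cases of Proposition~\ref{prop:4.1a} and across all decomposition indices. The pattern that makes the verification tractable is uniform across the cases: for each decomposable root $\eta$ in $\mathcal{O}_{1}\cup\mathcal{O}_{2}$ and each decomposition index, one summand turns out to be ``column-preserved'' under the $s$-action (in $\Delta^{C}$) and the other ``row-preserved'' (in $\Delta^{R}$), placing them respectively in $\Delta_{s^{-1}}$ and $\Delta_{s}$.
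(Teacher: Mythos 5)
Your choice of orbits is incorrect in two of the four cases, and your argument for their distinctness is a non sequitur in all of them. Two distinct simple roots can perfectly well lie in the same $\langle s\rangle$-orbit, and this actually happens for $\alpha_{1}$ and $\alpha_{l}$ precisely in cases (ii) and (iv) of Proposition \ref{prop:4.1a}, i.e.\ whenever $l'$ is even, so that $s$ is a cycle of odd length $l'+1$. Concretely, in case (iv) with $m=p=1$ (so $l=5$) the permutation is $s(1)=2$, $s(2)=5$, $s(5)=6$, $s(6)=4$, $s(4)=1$, $s(3)=3$, and one computes $s^{2}(\alpha_{1})=s(e_{2}-e_{5})=e_{5}-e_{6}=\alpha_{5}=\alpha_{l}$; likewise in case (ii) with $m=2$, $p=1$ one finds $s^{4}(\alpha_{1})=\alpha_{7}=\alpha_{l}$, and in the smallest instance ($SL_{3}$, $m=p=0$, $s=s_{\alpha_{2}}s_{\alpha_{1}}$) one has $s^{2}(\alpha_{1})=\alpha_{2}$. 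In all these cases your recipe produces only one orbit, so no amount of bookkeeping can complete the proof from it. (In cases (i) and (iii) your orbits do coincide with the ones the paper uses, but this has to be proved, not read off from the simple roots they contain.) The paper instead takes $\mathcal{O}_{1},\mathcal{O}_{2}$ to be the orbits of $\alpha_{m}+\cdots+\alpha_{m+p+1}$ and $\alpha_{m+1}+\cdots+\alpha_{m+p+2}$, which are distinct in all four cases, checks the lemma directly for these two roots, and then observes that every other decomposable element of $(\mathcal{O}_{1}\cup\mathcal{O}_{2})\cap(\ol{\Delta}_{K})_{+}$ is a sum of two simple roots, for which the check is immediate; that structural observation is exactly what your sketch is missing.

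The ``uniform pattern'' you invoke to make the verification tractable is also false, so deferring the case analysis to it is a genuine gap even in cases (i) and (iii). Writing roots as $e_{a}-e_{b}$, a root lies in $\Delta^{C}$ (resp.\ $\Delta^{R}$) exactly when its row (resp.\ column) index is a fixed point of the permutation $s$; hence a decomposition $e_{a}-e_{b}=(e_{a}-e_{c})+(e_{c}-e_{b})$ has one summand in $\Delta^{C}$ and one in $\Delta^{R}$ only when the cut index $c$ lies in $\{m+2,\dots,m+p+1\}$. If $c$ is moved by $s$, both summands lie in $\Delta^{O}$: in case (i) with $m=2$, $p=1$, the root $\alpha_{2}+\alpha_{3}+\alpha_{4}\in\mathcal{O}_{1}$ decomposes as $\alpha_{2}+(\alpha_{3}+\alpha_{4})$ with both parts in $\Delta^{O}$ (the lemma still holds there, since $\alpha_{2}\in\Delta_{s^{-1}}$ and $\alpha_{3}+\alpha_{4}\in\Delta_{s}$, but not by your mechanism). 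Moreover, no fixed assignment of $\Delta^{C},\Delta^{R}$ to $\Delta_{s^{-1}},\Delta_{s}$ can hold, because the pairing reverses between the two orbits: in the same example, $\alpha_{2}+\alpha_{3}+\alpha_{4}=(\alpha_{2}+\alpha_{3})+\alpha_{4}$ has its $\Delta^{R}$-summand $\alpha_{2}+\alpha_{3}$ in $\Delta_{s^{-1}}$ and its $\Delta^{C}$-summand $\alpha_{4}$ in $\Delta_{s}$, whereas $\alpha_{3}+\alpha_{4}+\alpha_{5}=\alpha_{3}+(\alpha_{4}+\alpha_{5})\in\mathcal{O}_{2}$ has its $\Delta^{R}$-summand $\alpha_{3}$ in $\Delta_{s}$ and its $\Delta^{C}$-summand $\alpha_{4}+\alpha_{5}$ in $\Delta_{s^{-1}}$. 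So both your choice of orbits and the organizing principle for the check need to be replaced before the remaining ``purely combinatorial'' work could be carried out.
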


\begin{proof}
Without loss of generality, one has $\alpha_{m} + \cdots + \alpha_{m+p+1} \in \mathcal{O}_{1}$ and $\alpha_{m+1} + \cdots + \alpha_{m+p+2} \in \mathcal{O}_{2}$. It can be easily checked that the roots $\alpha_{m} + \cdots + \alpha_{m+p+1}, \alpha_{m+1} + \cdots + \alpha_{m+p+2}$ satisfy the statements of the lemma. Moreover, if $\eta \in (\mathcal{O}_{1}\cup\mathcal{O}_{2})\cap(\ol{\Delta}_{K})_{+}$ such that $\eta = \eta_{1}+\eta_{2}$ and $\eta \notin \{ \alpha_{m} + \cdots + \alpha_{m+p+1}, \alpha_{m+1} + \cdots + \alpha_{m+p+2} \}$ then $\eta$ is necessarily the sum of two simple roots which again implies the statements of the lemma.
\end{proof}
\hfill

\subsection{} \label{sect: 4.8}
Using the partition of $(\ol{\Delta}_{K})_{+}$ given by (\ref{eq:4.1a}) the unipotent radical $N$ can be decomposed as
\begin{equation} \label{eq:4.1c}
N = N_{D+1}N_{D} \cdots N_{1}
\end{equation}
where, for each $1 \leq k \leq D+1$, the subgroup $N_{k}$ is generated by the one-parameter subgroups $X_{\alpha}$ for all $\alpha \in \ol{\Delta}^{k}_{K}$. Say that an $\alpha$-one-parameter subgroup has degree $k$ if $\alpha \in \ol{\Delta}^{k}_{K}$. Similarly, using the partition (\ref{eq:4.1b}) it follows that there exists a subset $Z'' \subseteq Z$ given by
\begin{equation} \label{eq:4.8}
Z'' = Z_{m+2} \cdots Z_{m+p+1}
\end{equation}
where, for each $m+2 \leq i \leq m+p+1$, the subgroup $Z_{i}$ is generated by the one-parameter subgroups $X_{\alpha}$ for all $\alpha \in \Delta_{Z}^{i}$. Denote by $Z_{H}$ the maximal torus of $Z$ and let $Z' = Z''Z_{H}$, then $Z'$ is just a conjugate of the big cell of $Z$ and is therefore a dense open subset. \\
Observe that $N_{s} = N_{D+1}N_{D}'$ where $N_{D}' = N_{D} \cap N_{s}$ and that this gives a decomposition of $N$ as $N = N_{s}N'_{s}$ where $N'_{s} = N \cap s^{-1}Ns$. It follows that the subgroup $NZ's^{-1}N$ can be written as
$$
NZ's^{-1}N = N_{s}N'_{s}s^{-1}NZ' = N_{s}s^{-1}NZ' = N_{s}Z's^{-1}N. 
$$
Fix some $u_{s}z's^{-1}u \in N_{s}Z's^{-1}N$. This element can be conjugated to an element of $Z's^{-1}N$, namely, by $u_{s}$ to the element $z's^{-1}v$ where $v = uu_{s} \in N$. Theorem \ref{prop:3.2a} states that there exists $n \in N$ and $n_{s} \in N_{s}$ such that 
$$
nz's^{-1}vn^{-1} = n_{s}z's^{-1}
$$
and, moreover, that such $n$ is unique. Write $n = m_{s}m$ where $m_{s} \in N_{s}$ and $m \in N'_{s}$. This gives
\begin{align*}
&n_{s}z's^{-1}n = m_{s}mz's^{-1}v, \\
&n_{s}z's^{-1}n = m_{s}z's^{-1}sz'^{-1}mz's^{-1}v, \\
&(sz'^{-1}n_{s}z's^{-1})n = (sz'^{-1}m_{s}z's^{-1})(sz'^{-1}mz's^{-1}v).
\end{align*}
From this we deduce that $n_{s} = m_{s}$ and 
\begin{equation} \label{eq:4.1d}
n = sz'^{-1}mz's^{-1}v.
\end{equation}
Using decomposition (\ref{eq:4.1c}) the equation (\ref{eq:4.1d}) can be written 
\begin{equation} \label{eq:4.1e}
n_{D+1} \cdots n_{1} = sz'^{-1}n''_{D}n_{D-1} \cdots n_{1}z's^{-1}v_{D+1} \cdots v_{1}
\end{equation} \label{eq:4.1f}
where $n_{k}, v_{k} \in N_{k}$, for each $1 \leq k \leq D+1$, and $n_{D}'' \in N_{D} \cap s^{-1}Ns$. The element $n$, in particular the element $n_{s}$, can now be found inductively. From equation (\ref{eq:4.1e}) it immediately follows that $n_{1} = v_{1}$ where $v_{1}$ is assumed to be known and 
\begin{equation} \label{eq:4.1g}
n_{k} = (sz'^{-1}n_{k-1}n_{k-2} \cdots n_{1}z's^{-1}v_{D+1} \cdots v_{k})_{k}
\end{equation}
by which we mean the $k$-th component of this expression with respect to the decomposition (\ref{eq:4.1c}). \\

\subsection{}
Define $\Sigma'_{s} = N_{s}Z's^{-1}$ and consider the morphisms 
$$
N_{s}Z's^{-1}N \overset{\nu}{\longrightarrow} N \times \Sigma'_{s} \overset{\pi}{\longrightarrow} \Sigma'_{s}
$$
where $\nu$ is the conjugation isomorphism established in Theorem \ref{prop:3.2a} and $\pi$ is projection onto the second factor. The pullback of these morphisms are the unique algebra morphisms 
$$
\mathbb{C}[\Sigma'_{s}] \overset{\pi^{*}}{\longrightarrow} \mathbb{C}[N \times \Sigma'_{s}] \overset{\nu^{*}}{\longrightarrow} \mathbb{C}[N_{s}Z's^{-1}N]
$$
defined by $\pi^{*}(f) = f \circ \pi$ and $\nu^{*}(g) = g \circ \nu$ where composition induces an isomorphism $\nu^{*}\circ\pi^{*} : \mathbb{C}[\Sigma'_{s}] \rightarrow \mathbb{C}[N_{s}Z's^{-1}N]^{N}$ of algebras. More precisely, if $\nu(u_{s}z's^{-1}u) = (x, n_{s}z's^{-1})$ then $(\nu^{*}\circ\pi^{*})(f)(u_{s}z's^{-1}u) = f(n_{s}z's^{-1})$. Next, consider the diagram
$$
\begin{tikzcd}
N_{s}Z's^{-1}N \arrow[rd] \arrow[r] & Z's^{-1}N \arrow[d] \\
                           & N \times \Sigma'_{s}.
\end{tikzcd}
$$
The map $N_{s}Z's^{-1}N \rightarrow Z's^{-1}N$ is conjugation defined by mapping a general element $u_{s}z's^{-1}u$ to $u_{s}^{-1}u_{s}z's^{-1}uu_{s}$ and the map $Z's^{-1}N \rightarrow N \times \Sigma'_{s}$ is the conjugation isomorphism $\nu$ restricted to $Z's^{-1}N$. The diagram is clearly not commutative but the composition $N_{s}Z's^{-1}N \rightarrow Z's^{-1}N \rightarrow N \times \Sigma'_{s}$ and the isomorphism $N_{s}Z's^{-1}N \rightarrow N \times \Sigma'_{s}$ do agree on the second factor. Indeed, if $u_{s}z's^{-1}u \mapsto z's^{-1}uu_{s} \mapsto (n^{-1}, n_{s}z's^{-1})$ for some unique $(n^{-1}, n_{s}z's^{-1}) \in N \times \Sigma'_{s}$ then $u_{s}z's^{-1}u \mapsto (u_{s}n^{-1}, n_{s}z's^{-1})$ under the isomorphism $\nu$. It follows that we may use the inductive method outlined in \ref{sect: 4.8} to find the element $n_{s}$ which will in turn allow us to explicitly describe the isomorphism $\nu^{*} \circ \pi^{*}$. \\

\subsection{} \label{sect: 4.10}
Define a binary relation $\preceq$ on the set $(\ol{\Delta}_{K})_{+}$ by $\alpha \preceq \alpha'$ if and only if 
\begin{enumerate}
\item[(i)] for $d(\alpha) \neq d(\alpha')$ one has $d(\alpha) > d(\alpha')$,
\item[(ii)] for $d(\alpha) = d(\alpha')$ and $\text{row}(\alpha) \neq \text{row}(\alpha')$ one has $\text{row}(\alpha) < \text{row}(\alpha')$,
\item[(iii)] for $d(\alpha) = d(\alpha')$ and $\text{row}(\alpha) = \text{row}(\alpha')$ one has $\text{col}(\alpha) \geq \text{col}(\alpha')$.
\end{enumerate} 
It is easily checked that $\preceq$ is a total order on $(\ol{\Delta}_{K})_{+}$. Elements of $N$ will be ordered according to $\preceq$, by which we mean, for $n = \prod_{\alpha \in (\ol{\Delta}_{K})_{+}} X_{\alpha}(c_{\alpha}) \in N$ the one-parameter subgroup $X_{\alpha}(c_{\alpha})$ will be written before $X_{\alpha'}(c_{\alpha'})$ in the product $n$ if and only if $\alpha \preceq \alpha'$. Observe that this respects the decomposition given by (\ref{eq:4.1c}). \\
Define a binary relation $\preceq_{Z}$ on the set $\Delta_{Z}$ by $\alpha \preceq_{Z} \alpha'$ if and only if
\begin{enumerate}
\item[(i)] for $\text{col}(\alpha) \neq \text{col}(\alpha')$ one has $\text{col}(\alpha) < \text{col}(\alpha')$,
\item[(ii)] for $\text{col}(\alpha) = \text{col}(\alpha')$ one has $\text{row}(\alpha) \leq \text{row}(\alpha')$.
\end{enumerate} 
It is easily checked that $\preceq_{Z}$ is a total order on $\Delta_{Z}$. Elements of $Z''$ will be ordered according to $\preceq_{Z}$, by which we mean, for $z'' = \prod_{\alpha \in \Delta_{Z}} X_{\alpha}(z_{\alpha}) \in Z''$ the one-parameter subgroup $X_{\alpha}(z_{\alpha})$ will be written before $X_{\alpha'}(z_{\alpha'})$ in the product $z''$ if and only if $\alpha \preceq_{Z} \alpha'$. Observe that this respects the decomposition given by (\ref{eq:4.8}). Finally, for any $z' \in Z'$, we write $z'=z''z_{H}$ where $z'' \in Z''$ and $z_{H} \in Z_{H}$. \\

\subsection{}
Let $\mathscr{X} = \{ X_{\alpha}(c_{\alpha}) : \alpha \in (\ol{\Delta}_{K})_{+}, \ c_{\alpha} \in \mathbb{C} \}$ and $\mathscr{N} \subseteq \mathscr{X}$ be any subset. Define a binary relation $\sim$ on $\mathscr{N}$ by $X_{\alpha}(c_{\alpha}) \sim X_{\alpha'}(c_{\alpha'})$ if and only if $\alpha = \alpha'$, for all $X_{\alpha}(c_{\alpha}), X_{\alpha'}(c_{\alpha'}) \in \mathscr{N}$. This defines an equivalence relation on $\mathscr{N}$ and we denote the set of equivalence classes by $\ol{\mathscr{N}}$. The binary relation $\preceq$ defined in \ref{sect: 4.10} induces a total preorder on $\mathscr{N}$ and a total order on $\ol{\mathscr{N}}$. We will abuse notation and denote both the induced preorder and induced order again by $\preceq$, that is, $X_{\alpha}(c_{\alpha}) \preceq X_{\alpha'}(c_{\alpha'})$ if and only if $\alpha \preceq \alpha'$, for all $X_{\alpha}(c_{\alpha}), X_{\alpha'}(c_{\alpha'}) \in \mathscr{N}$ and $[X_{\alpha}(c_{\alpha})]_{\sim} \preceq [X_{\alpha'}(c_{\alpha'})]_{\sim}$ if and only if $\alpha \preceq \alpha'$, for all $[X_{\alpha}(c_{\alpha})]_{\sim}, [X_{\alpha'}(c_{\alpha'})]_{\sim} \in \ol{\mathscr{N}}$. From this point onward we will write an equivalence class $[X_{\alpha}(c_{\alpha})]_{\sim} \in \ol{\mathscr{N}}$ as $[X_{\alpha}(c_{\alpha})]_{\sim} = X_{\alpha}(\sum c_{\alpha})$ where this sum is over the set $\{ c_{\alpha} : X_{\alpha}(c_{\alpha}) \in [X_{\alpha}(c_{\alpha})]_{\sim} \}$. \\
Let $u = X_{\eta_{1}}(c_{\eta_{1}}) \cdots X_{\eta_{t}}(c_{\eta_{t}})$, such that $\eta_{1}, \ldots, \eta_{t} \in (\ol{\Delta}_{K})_{+}$, be any product of one-parameter subgroups and define $\mathscr{X}(u)$ to be the set of one-parameter subgroups that appear in $u$ in the given presentation, that is, $\mathscr{X}(u) = \{ X_{\eta_{1}}(c_{\eta_{1}}), \ldots, X_{\eta_{t}}(c_{\eta_{t}}) \} \subseteq \mathscr{X}$. For $1 \leq k \leq D+1$ and $\alpha \in (\ol{\Delta}_{K})_{+}$ define the sets
\begin{align*}
\mathscr{X}_{k}^{C}(u) &= \{X_{\eta}(c_{\eta}) \in \mathscr{X}(u): \eta \in \ol{\Delta}_{K}^{k} \cap \Delta^{C} \}, \\
\mathscr{X}_{k}^{R}(u) &= \{X_{\eta}(c_{\eta}) \in \mathscr{X}(u): \eta \in \ol{\Delta}_{K}^{k} \cap \Delta^{R} \}, \\
\mathscr{X}_{k}^{O}(u) &= \{X_{\eta}(c_{\eta}) \in \mathscr{X}(u): \eta \in \ol{\Delta}_{K}^{k} \cap \Delta^{O} \}, \\
\mathscr{X}_{k}(u) &= \mathscr{X}_{k}^{C}(u)\cup\mathscr{X}_{k}^{R}(u)\cup\mathscr{X}_{k}^{O}(u), \\
\mathscr{X}_{\alpha}(u) &= \{X_{\eta}(c_{\eta}) \in \mathscr{X}(u): \eta = \alpha \}. 
\end{align*}
Observe that, for each $1 \leq k \leq D+1$, the term $n_{k} = \prod_{\alpha \in \ol{\Delta}_{K}^{k}} X_{\alpha}(c_{\alpha}) \in N_{k}$ ordered according to \ref{sect: 4.10} contains the expressions $\prod_{\alpha \in \ol{\Delta}_{K}^{k}\cap\Delta^{C}} X_{\alpha}(c_{\alpha})$ and $\prod_{\alpha \in \ol{\Delta}_{K}^{k}\cap\Delta^{R}} X_{\alpha}(c_{\alpha})$. Denote by $N_{k}^{C}$ and $N_{k}^{R}$ the subgroups in $N$ generated by the one-parameter subgroups $X_{\beta}$ and $X_{\delta}$, respectively, where $\beta \in \ol{\Delta}_{K}^{k}\cap\Delta^{C}$ and $\delta \in \ol{\Delta}_{K}^{k}\cap\Delta^{R}$. \\

\subsection{}
With presentations of elements in $Z'$ and $N$ established in \ref{sect: 4.10} we now consider the behaviour of the conjugation map $\tau : Z' \times N \rightarrow N$. By Lemma \ref{lemma:4.1a} (v) we need only consider this map when restricted to the subsets $Z' \times N_{k}^{C}$ and $Z' \times N_{k}^{R}$, for each $1 \leq k \leq D+1$. To do this we use the Baker-Campbell-Hausdorff theorem in type A which states that, for any $\alpha, \alpha' \in \Delta$,
$$
X_{\alpha}(c_{\alpha})X_{\alpha'}(z_{\alpha'}) = \left\{\begin{array}{ll}
X_{\alpha'}(z_{\alpha'})X_{\alpha}(c_{\alpha})X_{\alpha + \alpha'}(c_{\alpha}z_{\alpha'}), & \text{if $\alpha + \alpha' \in \Delta$} \\
X_{\alpha'}(z_{\alpha'})X_{\alpha}(c_{\alpha}), & \text{if $\alpha + \alpha' \notin \Delta$},
\end{array}\right.
$$
for any $c_{\alpha}, z_{\alpha'} \in \mathbb{C}$. It follows from this and Lemma \ref{lemma:4.1a} (iii), (iv) that $Z'$ normalizes the subgroups $N_{k}^{C}$ and $N_{k}^{R}$. Moreover, as $\beta + \beta', \delta + \delta' \notin \Delta$, for any $\beta,\beta' \in \ol{\Delta}_{K}^{k}\cap\Delta^{C}$ and any $\delta, \delta' \in \ol{\Delta}_{K}^{k}\cap\Delta^{R}$ one has, for any $n_{k}^{C} = \prod_{\beta \in \ol{\Delta}_{K}^{k}\cap\Delta^{C}} X_{\beta}(c_{\beta}) \in N_{k}^{C}$ and any $n_{k}^{R} = \prod_{\delta \in \ol{\Delta}_{K}^{k}\cap\Delta^{R}} X_{\delta}(c_{\delta}) \in N_{k}^{R}$, 
\begin{align*}
\tau(z'^{-1},n_{k}^{C}) &= \prod_{\beta' \in \ol{\Delta}_{K}^{k}\cap\Delta^{C}} X_{\beta'}(\sum_{\beta \in \ol{\Delta}_{K}^{k}\cap\Delta^{C}}c_{\beta}Z_{\beta'}(\beta)), \\
\tau(z'^{-1},n_{k}^{R}) &= \prod_{\delta' \in \ol{\Delta}_{K}^{k}\cap\Delta^{R}} X_{\delta'}(\sum_{\delta \in \ol{\Delta}_{K}^{k}\cap\Delta^{R}}c_{\delta}Z_{\delta'}(\delta)), \\
\end{align*}
for some $Z_{\beta'}(\beta), Z_{\delta'}(\delta) \in \mathbb{C}[z_{i,j} : m+2 \leq i,j \leq m+p+1]/(z_{m+2,m+2} \cdots z_{m+p+1,m+p+1}-1)$. \\
For $m+2 \leq i, j \leq m+p+1$ and $r \in \mathbb{Z}_{\geq 0}$ define expressions $T_{i,j}^{(r)}, S_{i,j}^{(r)}$ in the algebra $\mathbb{C}[z_{i,j} : m+2 \leq i,j \leq m+p+1]/(z_{m+2,m+2} \cdots z_{m+p+1,m+p+1}-1)$ by
\begin{equation*}
T_{i,j}^{(r)} = \left\{\begin{array}{ll} 
z_{i,i}^{-1}\sum_{i \neq j_{1}>\cdots>j_{r}>j} z_{i,j_{1}}z_{j_{1},j_{2}} \cdots z_{j_{r-1},j_{r}}z_{j_{r},j}, & \text{if $r > 0$} \\
z_{i,i}^{-1}z_{i,j}, & \text{if $r = 0$, $i \neq j$} \\
z_{i,i}^{-1}, & \text{if $r = 0$, $i = j$}
\end{array}\right.
\end{equation*}
and
\begin{equation*}
S_{i,j}^{(r)} = \left\{\begin{array}{ll} 
z_{i,i}\sum_{j \neq j_{1}<\cdots<j_{r}<i} z_{j,j_{1}}z_{j_{1},j_{2}} \cdots z_{j_{r-1},j_{r}}z_{j_{r},i}, & \text{if $r > 0$} \\
z_{j,i}z_{i,i}, & \text{if $r = 0$, $i \neq j$} \\
z_{i,i}, & \text{if $r=0$, $i = j$}.
\end{array}\right.
\end{equation*}
For notational convenience denote the elements of $\ol{\Delta}_{K}^{k}\cap\Delta^{C}$ by $\beta_{i}^{N_{k}}$ where $m+2 \leq i \leq m+p+1$ and $\text{row}(\beta_{i}^{N_{k}}) = i$. Similarly, denote the elements of $\ol{\Delta}_{K}^{k}\cap\Delta^{R}$ by $\delta_{i}^{N_{k}}$ where $m+2 \leq i \leq m+p+1$ and $\text{col}(\beta_{i}^{N_{k}}) = i$. The following lemma gives explicit descriptions of the terms $Z_{\beta'}(\beta)$, $Z_{\delta'}(\delta)$.

\begin{lemma} \label{lemma:4.2b}
Let $1 \leq k \leq D$, $z' \in Z'$, $n_{k}^{C} \in N_{k}^{C}$ and $n_{k}^{R} \in N_{k}^{R}$. Then for all $m+2 \leq i,j \leq m+p+1$ one has
$$
Z_{\beta_{i}^{N_{k}}}(\beta_{j}^{N_{k}}) = \sum_{r \geq 0} T_{i,j}^{(r)}, \quad Z_{\delta_{i}^{N_{k}}}(\delta_{j}^{N_{k}}) = \sum_{r \geq 0} S_{i,j}^{(r)}.
$$ 
\end{lemma}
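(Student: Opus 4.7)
The plan is to exploit two structural facts in tandem. First, Lemma \ref{lemma:4.1a}(i) forces every root in $\ol{\Delta}_{K}^{k} \cap \Delta^{C}$ to share a common column index $j_{0}$, and in type A two distinct roots sharing a column never sum to a root. Hence the one-parameter subgroups $X_{\beta_{i}^{N_{k}}}$ commute pairwise, $N_{k}^{C}$ is abelian, and the conjugation $\tau(z'^{-1}, n_{k}^{C})$ splits as a product over $i$ of single conjugations $z'^{-1} X_{\beta_{i}^{N_{k}}}(c_{\beta_{i}^{N_{k}}}) z'$. It therefore suffices to compute the effect on a single factor and read off the coefficients from the formula preceding the lemma.

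Second, in the defining matrix realisation one has $X_{\beta_{i}^{N_{k}}}(c) = I + c E_{i, j_{0}}$ with $i$ inside the block $\{m+2, \ldots, m+p+1\}$ but $j_{0}$ strictly outside it. Because every $z' \in Z'$ acts as the identity on rows and columns outside the block, I would reduce the conjugation to
$$
z'^{-1} E_{i, j_{0}}\, z' \;=\; \sum_{a = m+2}^{m+p+1} (z'^{-1})_{a, i}\, E_{a, j_{0}},
$$
which identifies $Z_{\beta_{a}^{N_{k}}}(\beta_{i}^{N_{k}})$ with the block matrix entry $(z'^{-1})_{a, i}$, up to the sign conventions inherent in the simplified type A BCH formula recorded earlier in the section.

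To compute these entries I would use the factorisation $z' = z_{m+2}\, z_{m+3} \cdots z_{m+p+1}\, z_{H}$ from \ref{sect: 4.8}. Each $z_{k}$ is a single-column matrix $I + \sum_{j \neq k} z_{j, k} E_{j, k}$ with inverse $I - \sum_{j \neq k} z_{j, k} E_{j, k}$, and $z_{H}$ is diagonal, so
$$
z'^{-1} = z_{H}^{-1}\, z_{m+p+1}^{-1}\, z_{m+p}^{-1} \cdots z_{m+2}^{-1}.
$$
Expanding this product and repeatedly applying the matrix-unit identity $E_{a, b} E_{c, d} = \delta_{b, c} E_{a, d}$, the only monomials surviving in the $(a, i)$-entry are of the form $z_{a, k_{1}} z_{k_{1}, k_{2}} \cdots z_{k_{r-1}, i}$; the chaining forces the intermediate indices to appear in strictly decreasing order $k_{1} > k_{2} > \cdots > k_{r-1} > i$, because the inverse factors are read off right-to-left in decreasing order of $k$, and one must have $k_{1} \neq a$ since each $z_{k}^{-1}$ contributes only off-diagonal elements. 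The diagonal factor $z_{H}^{-1}$ produces the overall prefactor $z_{a, a}^{-1}$, and the resulting expression matches $\sum_{r \geq 0} T_{a, i}^{(r)}$ term by term, with the $r = 0$ contribution accounting separately for the diagonal and off-diagonal cases.

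The $N_{k}^{R}$ case runs in parallel: by Lemma \ref{lemma:4.1a}(ii) the roots $\delta_{i}^{N_{k}}$ share a common row outside the block, so $N_{k}^{R}$ is again abelian, and the analogous matrix computation (now with $z'$ acting trivially on the relevant row and non-trivially on the column in the block) identifies $Z_{\delta_{i}^{N_{k}}}(\delta_{j}^{N_{k}})$ with $(z')_{j, i}$. Expanding $z' = z_{m+2} \cdots z_{m+p+1} z_{H}$ directly (in increasing-index order this time) produces monomials $z_{j, k_{1}} z_{k_{1}, k_{2}} \cdots z_{k_{r-1}, i}$ with the reversed constraint $k_{1} < k_{2} < \cdots < k_{r-1} < i$, matching $\sum_{r \geq 0} S_{i, j}^{(r)}$ once the diagonal $z_{H}$ supplies the prefactor $z_{i, i}$. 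The main obstacle I expect is the combinatorial bookkeeping: verifying that the surviving monomials in the matrix products are indexed exactly by the strictly monotone sequences appearing in $T_{i, j}^{(r)}$ and $S_{i, j}^{(r)}$, and that all coefficients and prefactors match, with careful tracking of the sign and ordering conventions coming from the simplified BCH formula.
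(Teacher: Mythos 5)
Your route is genuinely different from the paper's. The paper proves the lemma by induction, moving the one-parameter factors of $z'$ leftwards past a single $X_{\beta_{j}^{N_{k}}}(c)$ using its simplified type A commutation relation and then invoking the fact that $Z'$ normalizes $N_{k}^{C}$; you instead reduce to conjugating one matrix unit (via the correct observation that all roots of $\ol{\Delta}_{K}^{k}\cap\Delta^{C}$ share a common column outside the block, so $N_{k}^{C}$ is abelian) and read the coefficients off as entries of $z'^{-1}$, resp.\ $z'$, expanded through the factorisation $z' = z_{m+2}\cdots z_{m+p+1}z_{H}$. Your identifications and the path combinatorics (strictly monotone chains of intermediate indices, the constraint $k_{1}\neq a$, the diagonal prefactor supplied by $z_{H}$) are exactly right, and for the second formula your computation really does produce $\sum_{r\geq 0}S_{i,j}^{(r)}$ on the nose, because no inverse is involved there.

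The gap is in the $T$ case, at exactly the point you deferred as sign bookkeeping: it is not bookkeeping. In the matrix-unit calculus your argument depends on, each chosen factor of $z_{k}^{-1} = I - \sum_{j\neq k}z_{j,k}E_{j,k}$ contributes a minus sign, so a monomial of $(z''^{-1})_{a,i}$ built from $r$ factors carries $(-1)^{r}$, and off the diagonal your expansion produces each $T_{a,i}^{(r)}$ with the sign $(-1)^{r+1}$ instead of $+1$. Concretely, for $SL_{3}$ with block $\{1,2\}$ and $z' = (I+z_{2,1}E_{2,1})(I+z_{1,2}E_{1,2})\,\mathrm{diag}(z_{1,1},z_{2,2},1)$, one finds
$$
z'^{-1}\bigl(I + cE_{2,3}\bigr)z' \;=\; I + cz_{2,2}^{-1}E_{2,3} - cz_{1,1}^{-1}z_{1,2}E_{1,3},
$$
whereas the lemma asserts the coefficient $+cz_{1,1}^{-1}z_{1,2} = cT_{1,2}^{(0)}$. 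The clash is structural: the paper's stated relation $X_{\alpha}(c)X_{\alpha'}(z) = X_{\alpha'}(z)X_{\alpha}(c)X_{\alpha+\alpha'}(cz)$ declares every structure constant to be $+1$, and the all-positive $T_{i,j}^{(r)}$ are derived inside that convention; that convention is incompatible with the literal realisation $X_{\alpha}(c) = I + cE_{\alpha}$ on which your expansion rests (for the pairs occurring in the $C$ case the honest constant is $-1$), and one can check that no renormalisation $E_{\alpha}\mapsto \pm E_{\alpha}$ makes the relevant constants equal to $+1$ for the $C$ and $R$ families simultaneously once the block has at least three indices. So you cannot have both honest matrix units and the stated signs: carried out faithfully, your plan proves a signed variant of the first displayed formula, and proving the formula exactly as stated forces you back into the paper's sign convention, i.e.\ essentially back to the paper's inductive rearrangement. (A minor further point: $z_{H}$ is not the identity outside the block — its outside diagonal entries are a common scalar — though with the paper's block-determinant-one coordinates this scalar can be normalised to $1$, as you implicitly do.)
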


\begin{proof}
The proof is by induction and is straightforward. To prove the first equality of the lemma one starts by considering the term $X_{\beta_{j}^{N_{k}}}(c_{\beta_{j}^{N_{k}}})z'$, for some $m+2 \leq j \leq m+p+1$, in $N_{k}^{C}Z'$ and the effect of successively moving the one-parameter subgroups in the product $z'$ to the left past $X_{\beta_{j}^{N_{k}}}(c_{\beta_{j}^{N_{k}}})$. One obtains
$$
X_{\beta_{j}^{N_{k}}}(c_{\beta_{j}^{N_{k}}})z' = z'\Big(\prod_{i=m+2}^{m+p+1}X_{\beta_{i}^{N_{k}}}(c_{\beta_{j}^{N_{k}}}\sum_{r \geq 0} T_{i,j}^{(r)}))\Big).
$$
Recall that $Z'$ normalizes $N_{k}^{C}$, thus the term on the right hand side of the above equality is an element of $Z'N_{k}^{C}$.  It follows that successively moving the one-parameter subgroups in $z'$ in the product $n_{k}^{C}z'$ to the left of the term $n_{k}^{C}$ gives
\begin{align*}
\Big(\prod_{j=m+2}^{m+p+1}X_{\beta_{j}^{N_{k}}}(c_{\beta_{j}^{N_{k}}})\Big)z' =& z'\Big(\prod_{j=m+2}^{m+p+1}\Big(\prod_{i=m+2}^{m+p+1}X_{\beta_{i}^{N_{k}}}(c_{\beta_{j}^{N_{k}}}\sum_{r \geq 0} T_{i,j}^{(r)})\Big)\Big) \\
=& z'\Big(\prod_{i=m+2}^{m+p+1}X_{\beta_{i}^{N_{k}}}(\sum_{j=m+2}^{m+p+1}(c_{\beta_{j}^{N_{k}}}\sum_{r \geq 0} T_{i,j}^{(r)}))\Big)
\end{align*}
and proves the first claim of the lemma. The second equality is proved in a similar way.
\end{proof}
\hfill

\subsection{} \label{sect: 4.13}
It is well known that
$$
s_{\alpha}X_{\eta}(c_{\eta})s_{\alpha}^{-1} = X_{s_{\alpha}\eta}(\theta_{\alpha, \eta}c_{\eta}),
$$
for some $c_{\eta} \in \CC$ and any $\alpha, \eta \in \Delta$, where $\theta_{\alpha, \eta} = \pm 1$ and is defined by $s_{\alpha}E_{\eta} = \theta_{\alpha, \eta}E_{s_{\alpha}\eta}$; see for example \cite[Lemma 7.2.1]{Car2}. It follows that for $s \in W$ and $\eta \in (\ol{\Delta}_{K})_{+}$ one has
$$
sX_{\eta}(c_{\eta})s^{-1} = X_{s\eta}(\prod_{a=1}^{l'} \theta_{\gamma_{a}, s_{\gamma_{a+1}} \cdots s_{\gamma_{l'}}(\eta)}c_{\eta}).
$$
Define $t(\eta) = \prod_{a=1}^{l'} \theta_{\gamma_{a}, s_{\gamma_{a+1}} \cdots s_{\gamma_{l'}}(\eta)}$ and note that $t(\eta) = \pm 1$ for all $\eta \in (\ol{\Delta}_{K})_{+}$. The term $n'_{k}$ can now be written
$$
n'_{k} = \prod_{\alpha \in \ol{\Delta}_{K}^{k}} X_{\alpha}(c'_{s^{-1}\alpha})
$$
where
$$
c'_{s^{-1}\alpha} = \left\{\begin{array}{ll}
t(\beta_{i}^{N_{k-1}})\sum_{j=m+2}^{m+p+1}c_{\beta_{j}^{N_{k-1}}}Z_{\beta_{i}^{N_{k-1}}}(\beta_{j}^{N_{k-1}}), & \text{if $s^{-1}\alpha = \beta_{i}^{N_{k-1}} \in \ol{\Delta}_{K}^{k-1} \cap \Delta^{C}$} \\
t(\delta_{i}^{N_{k-1}})\sum_{j=m+2}^{m+p+1}c_{\delta_{j}^{N_{k-1}}}Z_{\delta_{i}^{N_{k-1}}}(\delta_{j}^{N_{k-1}}), & \text{if $s^{-1}\alpha = \delta_{i}^{N_{k-1}} \in \ol{\Delta}_{K}^{k-1} \cap \Delta^{R}$} \\
t(s^{-1}\eta)c_{s^{-1}\eta}, & \text{if $s^{-1}\alpha = \eta \in \ol{\Delta}_{K}^{k-1} \cap \Delta^{O}$}.
\end{array}\right.
$$
It can be easily verified by the reader that for $\alpha \in \Lambda$ and $\beta \in (\ol{\Delta}_{K})_{+}$ such that $\alpha \neq \beta$ one has
$$
\theta_{\alpha, \beta} = \left\{\begin{array}{ll}
-1, & \text{if $\text{row}(\alpha) = \text{col}(\beta)$} \\
1, & \text{if $\text{col}(\alpha) = \text{row}(\beta)$} \\
-1, & \text{if $\text{row}(\alpha) = \text{row}(\beta)$} \\
1, & \text{if $\text{col}(\alpha) = \text{col}(\beta)$}
\end{array}\right.
$$
and $\theta_{\alpha, \alpha} = -1$. \\

\subsection{}
Recall the equation 
$$
n_{k} = (sz'^{-1}n_{k-1} \cdots n_{1}z's^{-1}v_{D+1} \cdots v_{k})_{k}
$$
established in (\ref{eq:4.1g}). This can now be written
\begin{equation} \label{eq:4.3a}
n_{k} = (n'_{k} \cdots n'_{2}v_{D+1} \cdots v_{k})_{k}.
\end{equation}
In the next section we rearrange the expression $n'_{k} \cdots n'_{2}v_{D+1} \cdots v_{k}$ to a product respecting the ordering of one-parameter subgroups established in \ref{sect: 4.10}. Using the inductive hypothesis that all terms on the right-hand side of (\ref{eq:4.3a}) are known one can then deduce the term $n_{k}$. In order to reach this end we must first prove some technical lemmas. \\

\subsection{}
For any $2 \leq k \leq D+1$ and any $\alpha \in \ol{\Delta}_{K}^{k}$ define the sets
\begin{IEEEeqnarray*}{rCl}
\mathcal{P}_{\alpha} &=& \{ (\eta_{1},\eta_{2}) \in (\ol{\Delta}_{K})_{+}\times(\ol{\Delta}_{K})_{+} : \eta_{1} \in \bigcup_{i=2}^{d(\alpha)-1} \ol{\Delta}_{K}^{i}, \eta_{2} \in \ol{\Delta}_{K}^{d(\alpha)} \ \text{and} \ \alpha = \eta_{1} + \eta_{2} \}, \\
\mathcal{P}'_{\alpha} &=& \{ (\eta_{1},\eta_{2}) \in (\ol{\Delta}_{K})_{+}\times(\ol{\Delta}_{K})_{+} : \eta_{1}, \eta_{2} \in \ol{\Delta}_{K}^{d(\alpha)} \ \text{and} \ \alpha = \eta_{1} + \eta_{2} \}.
\end{IEEEeqnarray*}

\begin{lemma} \label{lemma:4.3a}
Let $2 \leq q \leq f \leq k$, $\kappa \in \ol{\Delta}_{K}^{k}$, $\eta, \eta_{1} \in \ol{\Delta}_{K}^{f}$ and $\eta_{2} \in \bigcup_{i=f}^{D+1} \ol{\Delta}_{K}^{i}$ such that $\eta = \eta_{1}+\eta_{2}$.
\begin{itemize}
\item[(i)] If there exists $\eta_{3} \in (\ol{\Delta}_{K})_{+}$ such that $\eta+\eta_{3} \in \Delta$ then either $\eta+\eta_{3} \notin \ol{\Delta}_{K}^{f}$ or $d(\eta_{3}) < f$.
\item[(ii)] If $\kappa \in \Delta^{C} \cup \Delta^{R}$ then $\mathcal{P}_{\kappa} = \emptyset$.
\item[(iii)] If $\kappa \in (\ol{\Delta}_{K})_{+}\backslash\Delta_{3}^{O}$ then $\mathcal{P}'_{\kappa} = \emptyset$.
\end{itemize}
\end{lemma}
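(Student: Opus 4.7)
The plan is to deduce all three parts from the structural Lemmas \ref{lemma:4.1b}, \ref{lemma:4.1c}, \ref{lemma:4.1d} together with the explicit description of $\Delta_{s^{-1}}$ in \ref{sect: 4.4} and the subsequent remark on $\gamma'$. Parts (ii) and (iii) come almost for free; part (i) is where the work lies.

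For (ii), any $(\eta_{1}',\eta_{2}')\in\mathcal{P}_{\kappa}$ yields $\kappa=\eta_{1}'+\eta_{2}'$ with $\eta_{2}'\in\ol{\Delta}_{K}^{d(\kappa)}$ and $\kappa\in\ol{\Delta}_{K}^{d(\kappa)}\cap(\Delta^{C}\cup\Delta^{R})$; Lemma \ref{lemma:4.1c} applied to this decomposition forces $\eta_{2}'\notin\ol{\Delta}_{K}^{d(\kappa)}$, a contradiction. For (iii), a pair in $\mathcal{P}'_{\kappa}$ gives $\kappa=\eta_{1}'+\eta_{2}'$ with both summands in $\ol{\Delta}_{K}^{d(\kappa)}$; if $\kappa\in\Delta^{C}\cup\Delta^{R}$ Lemma \ref{lemma:4.1c} again applies, and if $\kappa\in\Delta_{1}^{O}\cup\Delta_{2}^{O}$ Lemma \ref{lemma:4.1b} forces $\kappa\in\Delta_{3}^{O}$, contradicting $\kappa\notin\Delta_{3}^{O}$.

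For (i) I would argue by contradiction, assuming $\eta+\eta_{3}\in\ol{\Delta}_{K}^{f}$ and $d(\eta_{3})\geq f$ simultaneously. The case $d(\eta_{3})=f$ is immediate from Lemma \ref{lemma:4.1b} (applied to $\eta,\eta_{3}\in\ol{\Delta}_{K}^{f}$, forcing $\eta\in\Delta^{C}\cup\Delta^{R}$) and Lemma \ref{lemma:4.1d} (applied to $\eta=\eta_{1}+\eta_{2}$ with $\eta,\eta_{1}\in\ol{\Delta}_{K}^{f}$, forcing $\eta\in\Delta^{O}$), since $\Delta^{O}$ is disjoint from $\Delta^{C}\cup\Delta^{R}$. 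The remaining case $d(\eta_{3})>f$ is handled by applying $s^{-(f-1)}$ to transport the configuration into $\Delta_{s^{-1}}$: setting $\alpha=s^{-(f-1)}\eta$, $\beta=s^{-(f-1)}\eta_{3}$ and $\zeta_{i}=s^{-(f-1)}\eta_{i}$ for $i=1,2$, one has $\alpha,\alpha+\beta\in\Delta_{s^{-1}}$, $\zeta_{1}\in\Delta_{s^{-1}}$, $\alpha=\zeta_{1}+\zeta_{2}$ and $\beta\in(\ol{\Delta}_{K})_{+}\setminus\Delta_{s^{-1}}$ (the last because $d(\eta_{3})>f$). If $\zeta_{2}\in\Delta_{s^{-1}}$ the remark following \ref{sect: 4.4} forces $\alpha=\gamma'$; but the lists in \ref{sect: 4.4} show that no root of $\Delta_{s^{-1}}$ has support properly containing that of $\gamma'$, so $\gamma'+\beta\notin\Delta_{s^{-1}}$, a contradiction. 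If $\zeta_{2}\notin\Delta_{s^{-1}}$, an inspection of the four descriptions in \ref{sect: 4.4} shows that the only non-simple roots of $\Delta_{s^{-1}}$ admitting any extension $\alpha+\beta\in\Delta_{s^{-1}}$ at all are of the form $\alpha_{m}+\cdots+\alpha_{m+k}$ (and their analogues in the remaining three cases), and for every such extension the difference $\beta$ lies either in $\ol{\Delta}_{0}$ or in $\Delta_{s^{-1}}$, never in $(\ol{\Delta}_{K})_{+}\setminus\Delta_{s^{-1}}$, again a contradiction.

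The main obstacle is the second sub-case of Case 2 in part (i), which rests on carefully enumerating the extensions available within $\Delta_{s^{-1}}$ across the four forms of Proposition \ref{prop:4.1a}; the remaining sub-case and parts (ii)--(iii) reduce essentially to direct citation of the preceding lemmas.
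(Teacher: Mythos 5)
Your parts (ii) and (iii) are correct: your (ii) via Lemma \ref{lemma:4.1c} is an equivalent substitute for the paper's appeal to Lemma \ref{lemma:4.1d}, and your (iii), which handles all of $(\ol{\Delta}_{K})_{+}\backslash\Delta_{3}^{O}$ at once through Lemma \ref{lemma:4.1b}, is if anything tidier than the paper's two-case argument. In (i), your case $d(\eta_{3})=f$ (playing Lemma \ref{lemma:4.1b} against Lemma \ref{lemma:4.1d}) and your sub-case $\zeta_{2}\in\Delta_{s^{-1}}$ (forcing $\alpha=\gamma'$, which admits no extension inside $\Delta_{s^{-1}}$ because no root of $\Delta_{s^{-1}}$ has support properly containing that of $\gamma'$) are also sound; between them they cover three of the four cases the paper treats.

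The gap is in your final sub-case ($d(\eta_{3})>f$, $\zeta_{2}\notin\Delta_{s^{-1}}$), and it lies in the enumeration itself. The claim that the only non-simple roots of $\Delta_{s^{-1}}$ admitting an extension inside $\Delta_{s^{-1}}$ are those of the form $\alpha_{m}+\cdots+\alpha_{m+k}$ is false: in all four cases of Proposition \ref{prop:4.1a} the roots $\alpha_{m+j}+\cdots+\alpha_{m+p+2}$, $2\leq j\leq p+1$, are non-simple members of $\Delta_{s^{-1}}$ which extend to $\gamma'$ by adding $\alpha_{m}+\cdots+\alpha_{m+j-1}\in\Delta_{s^{-1}}$, and to $\alpha_{m+j'}+\cdots+\alpha_{m+p+2}$ ($j'<j$) by adding a root of $\ol{\Delta}_{0}$; for instance $(\alpha_{m+2}+\cdots+\alpha_{m+p+2})+(\alpha_{m}+\alpha_{m+1})=\gamma'$. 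Moreover this family genuinely occurs under your sub-case hypotheses, since $\alpha_{m+2}+\cdots+\alpha_{m+p+2}=\zeta_{1}+\zeta_{2}$ with $\zeta_{1}=\alpha_{m+p+2}\in\Delta_{s^{-1}}$ and $\zeta_{2}=\alpha_{m+2}+\cdots+\alpha_{m+p+1}\in(\ol{\Delta}_{K})_{+}\setminus\Delta_{s^{-1}}$. So your dichotomy "either $\alpha$ admits no extension at all, or $\alpha$ is of $B$-type and every difference $\beta$ lies in $\ol{\Delta}_{0}\cup\Delta_{s^{-1}}$" silently skips a case that must be ruled out. The gap is repairable: the skipped family satisfies the same key property (every extension has difference in $\ol{\Delta}_{0}$ or in $\Delta_{s^{-1}}$, never in $(\ol{\Delta}_{K})_{+}\setminus\Delta_{s^{-1}}$), so your contradiction survives once the enumeration is corrected to include it. For comparison, the paper sidesteps this bookkeeping in its corresponding case: it observes that any such extendable $\alpha=\zeta_{1}+\zeta_{2}$ forces $\zeta_{1}\in\{\alpha_{m},\alpha_{m+p+2}\}$ and hence $\alpha\in\Delta^{C}\cup\Delta^{R}$, which contradicts $s^{-(f-1)}\eta\in\Delta^{O}$ coming from Lemma \ref{lemma:4.1d} --- i.e.\ it reuses exactly the clash you exploit in your $d(\eta_{3})=f$ case, rather than a full classification of extensions.
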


\begin{proof}
For (i) assume $\eta+\eta_{3} \in \ol{\Delta}_{K}^{f}$ and suppose $f \leq d(\eta_{3}) \leq D+1$. One has $s^{-(f-1)}\eta_{1} \in \Delta_{s^{-1}}$, $s^{-(f-1)}\eta_{2}, s^{-(f-1)}\eta_{3} \in (\ol{\Delta}_{K})_{+}$ and by Lemma \ref{lemma:4.1d} it follows that $\eta \in \Delta^{O}$. If $s^{-(f-1)}\eta_{2}, s^{-(f-1)}\eta_{3} \in \Delta_{s^{-1}}$ then $s^{-(f-1)}(\eta+\eta_{3})$ is a sum of three roots in $\Delta_{s^{-1}}$, a contradiction. If $s^{-(f-1)}\eta_{2} \in \Delta_{s^{-1}}$ and $s^{-(f-1)}\eta_{3} \in \Delta_{+}\backslash\Delta_{s^{-1}}$ then $s^{-(f-1)}\eta = \gamma'$ and thus $s^{-(f-1)}\eta_{3} \in \Delta_{-}$, a contradiction as $d(\eta_{3}) \geq f$. If $s^{-(f-1)}\eta_{2} \in \Delta_{+}\backslash\Delta_{s^{-1}}$ and $s^{-(f-1)}\eta_{3} \in \Delta_{s^{-1}}$ then $s^{-(f-1)}(\eta+\eta_{3})=\gamma'$ and thus $s^{-(f-1)}\eta, s^{-(f-1)}\eta_{3} \in \Delta^{C}\cup\Delta^{R}$, a contradiction as $s^{-(f-1)}\eta \in \Delta^{O}$. Finally, if $s^{-(f-1)}\eta_{2} \in \Delta_{+}\backslash\Delta_{s^{-1}}$ and $s^{-(f-1)}\eta_{3} \in \Delta_{+}\backslash\Delta_{s^{-1}}$ then $s^{-(f-1)}\eta_{1} \in \{ \alpha_{m}, \alpha_{m+p+2} \}$, $s^{-(f-1)}\eta \in \Delta^{C}\cup\Delta^{R}$ and $s^{-(f-1)}(\eta+\eta_{3}) = \gamma'$, contradiction as $s^{-(f-1)}\eta \in \Delta^{O}$.  \\
The statement (ii) follows from Lemma \ref{lemma:4.1d}. \\
For (iii), if $\kappa \in \Delta^{C} \cup \Delta^{R}$ then it follows from Lemma \ref{lemma:4.1d} that $\mathcal{P}'_{\kappa} = \emptyset$. If $\kappa \in \Delta_{1}^{O}\cup\Delta_{2}^{O}$ and we suppose there exists $(\eta_{1},\eta_{2}) \in \mathcal{P}'_{\kappa}$ then $s^{-(k-1)}\kappa = s^{-(k-1)}\eta_{1}+s^{-(k-1)}\eta_{2} \in \Delta_{s^{-1}}$ which implies $s^{-(k-1)}\kappa = \gamma'$ and thus $s^{-(k-1)}\eta_{1}, s^{-(k-1)}\eta_{2} \in \Delta^{C}\cup\Delta^{R}$, a contradiction.
\end{proof}

\begin{lemma} \label{lemma:4.3b}
Let $\eta, \eta_{1} \in \ol{\Delta}_{K}^{k}$ and $\eta_{2} \in (\bigcup_{i=2}^{k-1} \ol{\Delta}_{K}^{i}) \cup (\ol{\Delta}_{K}^{k} \cap \Delta^{O})$ such that $\eta = \eta_{1}+\eta_{2}$. If there exists $\eta_{3} \in (\bigcup_{i=2}^{k-1} \ol{\Delta}_{K}^{i}) \cup (\ol{\Delta}_{K}^{k} \cap \Delta^{O})$ such that $\eta + \eta_{3} \in \Delta$ then $\eta+\eta_{3} \notin \ol{\Delta}_{K}^{k}$.
\end{lemma}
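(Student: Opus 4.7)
The plan is to argue by contradiction: assume $\eta + \eta_3 \in \ol{\Delta}_K^k$ and derive a contradiction. First I would eliminate the cases $\eta_2 \in \ol{\Delta}_K^k \cap \Delta^O$ and $\eta_3 \in \ol{\Delta}_K^k \cap \Delta^O$ as vacuous. By Lemma \ref{lemma:4.1b}, if $\eta_1, \eta_2 \in \ol{\Delta}_K^k$ with $\eta_1 + \eta_2 = \eta \in \Delta$, then (after swapping) $\eta_1 \in \Delta^C$ and $\eta_2 \in \Delta^R$, contradicting $\eta_2 \in \Delta^O$. The same lemma applied to $\eta + \eta_3 = \eta + \eta_3$ (with both summands assumed in $\ol{\Delta}_K^k$), together with Lemma \ref{lemma:4.1d} which places $\eta \in \Delta^O$, rules out $\eta_3 \in \ol{\Delta}_K^k$. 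Hence $\eta_2, \eta_3 \in \bigcup_{i=2}^{k-1} \ol{\Delta}_K^i$, so $d(\eta_2), d(\eta_3) < k$.

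Next I would apply $s^{-(k-1)}$. Since $\eta, \eta + \eta_3 \in \ol{\Delta}_K^k \cap \Delta^O$ (using Lemma \ref{lemma:4.1d}), and since the partition $(\ol{\Delta}_K)_+ = \Delta^C \sqcup \Delta^R \sqcup \Delta^O$ is $s$-equivariant (the defining conditions "row (resp.\ column) of $\alpha$ is fixed by the permutation $s$" are preserved under $s$), the shifted roots $s^{-(k-1)}\eta$ and $s^{-(k-1)}(\eta + \eta_3)$ both lie in $\Delta_{s^{-1}} \cap \Delta^O$, with $s^{-(k-1)}\eta = s^{-(k-1)}\eta_1 + s^{-(k-1)}\eta_2$ and $s^{-(k-1)}\eta_1 \in \Delta_{s^{-1}}$. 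Using the key structural observation from Subsection 4.5 that $\gamma'$ is the only root in $\Delta_{s^{-1}}$ which is a sum of two other elements of $\Delta_{s^{-1}}$, and that any such decomposition of $\gamma'$ splits between $\Delta^C$ and $\Delta^R$, one performs a case analysis according to whether $s^{-(k-1)}\eta_2$ lies in $\Delta_{s^{-1}}$, in $(\ol{\Delta}_K)_+ \setminus \Delta_{s^{-1}}$, or in $\Delta_-$, and similarly for $s^{-(k-1)}\eta_3$. In the first branch, $s^{-(k-1)}\eta = \gamma'$, forcing $s^{-(k-1)}\eta_1, s^{-(k-1)}\eta_2$ to split between $\Delta^C$ and $\Delta^R$, and one then checks via the explicit list of Subsection 4.4 that $\gamma' + s^{-(k-1)}\eta_3$ cannot again lie in $\Delta_{s^{-1}} \cap \Delta^O$. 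The other branches are ruled out by analogous inspections of the explicit description of $\Delta_{s^{-1}}$ in each of the four forms of Proposition \ref{prop:4.1a}.

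The main obstacle is this final enumeration: because $d(\eta_2), d(\eta_3) < k$, the shifts $s^{-(k-1)}\eta_2$ and $s^{-(k-1)}\eta_3$ need not be positive, so the clean argument of Lemma \ref{lemma:4.3a} (i) is not directly applicable (even taking $f = 2$ the hypothesis $d(\eta_2) \geq f$ fails). One must instead trace the negative $s$-translates of roots in $\bigcup_{i < k} \ol{\Delta}_K^i$ through the explicit combinatorics of $\Delta_{s^{-1}}$ and $\mathcal{O}_{\gamma'} \cap (\ol{\Delta}_K)_+$, verifying in each of the four forms of Proposition \ref{prop:4.1a} that no pair of elements of $\Delta_{s^{-1}} \cap \Delta^O$ differing by a root is consistent with the structural constraint $s^{-(k-1)}\eta = s^{-(k-1)}\eta_1 + s^{-(k-1)}\eta_2$ with $s^{-(k-1)}\eta_1 \in \Delta_{s^{-1}}$.
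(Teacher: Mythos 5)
Your opening reduction is correct, and it is in fact a cleaner route through what the paper disposes of as its first three cases: applying Lemma \ref{lemma:4.1b} to $\eta_{1}+\eta_{2}=\eta$ and to $\eta+\eta_{3}$, together with Lemma \ref{lemma:4.1d}, does legitimately force $\eta_{2},\eta_{3} \in \bigcup_{i=2}^{k-1}\ol{\Delta}_{K}^{i}$, whereas the paper reaches the same contradictions via the $\gamma'$-decomposition of $\Delta_{s^{-1}}$.

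The gap is that everything after this reduction --- which is the entire content of the lemma, and is the paper's final case --- is a description of a verification you have not performed, and the concrete claims you make about how its branches would close are wrong. First, the branch structure: you split according to whether $s^{-(k-1)}\eta_{2}$ and $s^{-(k-1)}\eta_{3}$ lie in $\Delta_{s^{-1}}$, in $\Delta_{+}\backslash\Delta_{s^{-1}}$, or in $\Delta_{-}$, but the branches with a positive shift cannot be ``ruled out by inspection of $\Delta_{s^{-1}}$'' as you assert. Concretely, in your first branch the claim that $\gamma'+s^{-(k-1)}\eta_{3}$ cannot lie in $\Delta_{s^{-1}}\cap\Delta^{O}$ is false: $\gamma' = \alpha_{m}+(\alpha_{m+1}+\cdots+\alpha_{m+p+2}) = (\alpha_{m}+\cdots+\alpha_{m+p+1})+\alpha_{m+p+2}$, and in all four cases of Proposition \ref{prop:4.1a} both $\alpha_{m}$ and $\alpha_{m+p+2}$ belong to $\Delta_{s^{-1}}\cap\Delta^{O}$; likewise $\alpha_{2}+\alpha_{3}+\alpha_{4} = \alpha_{2}+(\alpha_{3}+\alpha_{4})$ shows a root of $\Delta_{s^{-1}}$ can be a root of $\Delta_{s^{-1}}$ plus a positive root outside $\Delta_{s^{-1}}$, so the middle branch does not fall to inspection either. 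These branches are in fact vacuous, but only because $d(\eta_{i})<k$ forces $s^{-(k-1)}\eta_{i} \in \Delta_{-}$ (the rotation of $\lieh_{K}$ by $s$ cannot return a projection to the positive half-plane in fewer than $D+1$ backward steps); this is exactly the paper's assertion that the shifts lie in $\Delta_{s^{-1}}\cup\Delta_{-}$, and you never establish it. Second, in the one genuine branch (both shifts negative) you look for the contradiction in the wrong place: the bottom root $s^{-(k-1)}(\eta+\eta_{3})$ lands harmlessly in $\{\alpha_{m},\alpha_{m+p+2}\}\subset\Delta^{O}$. What the explicit lists of \ref{sect: 4.4} actually yield is that a three-term chain $\beta_{1},\beta_{2},\beta_{3} \in \Delta_{s^{-1}}$ with $\beta_{1}-\beta_{2}$ and $\beta_{2}-\beta_{3}$ positive roots (here $\beta_{1}=s^{-(k-1)}\eta_{1}$, $\beta_{2}=s^{-(k-1)}\eta$, $\beta_{3}=s^{-(k-1)}(\eta+\eta_{3})$) forces $\beta_{1}=\gamma'$ and the \emph{middle} root $\beta_{2}$ into $\Delta^{C}\cup\Delta^{R}$, which contradicts $\eta \in \Delta^{O}$ from Lemma \ref{lemma:4.1d}. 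That enumeration, locating the contradiction at $\beta_{2}$ rather than $\beta_{3}$, is precisely the step you have deferred, so the proposal does not constitute a proof.
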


\begin{proof}
Suppose $\eta + \eta_{3} \in \ol{\Delta}_{K}^{k}$, then by Lemma \ref{lemma:4.1d} $\eta + \eta_{3} \in \Delta^{O}$. One has $s^{-(k-1)}(\eta+\eta_{3}) = s^{-(k-1)}\eta_{1}+s^{-(k-1)}\eta_{2}+s^{-(k-1)}\eta_{3} \in \Delta_{s^{-1}}$ where $s^{-(k-1)}\eta_{2} \in \Delta_{s^{-1}}\cup\Delta_{-}$ and $s^{-(k-1)}\eta_{3} \in \Delta_{s^{-1}}\cup\Delta_{-}$. If $s^{-(k-1)}\eta_{2} \in \Delta_{s^{-1}}$ and $s^{-(k-1)}\eta_{3} \in \Delta_{s^{-1}}$ then $s^{-(k-1)}(\eta+\eta_{3})$ is a sum of three roots in $\Delta_{s^{-1}}$, a contradiction. If $s^{-(k-1)}\eta_{2} \in \Delta_{s^{-1}}$ and $s^{-(k-1)}\eta_{3} \in \Delta_{-}$ then $s^{-(k-1)}\eta = \gamma'$ and $s^{-(k-1)}\eta_{1}, s^{-(k-1)}\eta_{2} \in \Delta^{C}\cup\Delta^{R}$, a contradiction as $s^{-(k-1)}\eta_{2} \in \Delta^{O}$. If $s^{-(k-1)}\eta_{2} \in \Delta_{-}$ and $s^{-(k-1)}\eta_{3} \in \Delta_{s^{-1}}$ then $s^{-(k-1)}\eta_{1} + s^{-(k-1)}\eta_{3} \in \Delta_{s^{-1}}$, i.e. $s^{-(k-1)}\eta_{1} + s^{-(k-1)}\eta_{3} = \gamma'$ and thus $s^{-(k-1)}\eta_{3} \in \Delta^{C} \cup \Delta^{R}$, a contradiction as $s^{-(k-1)}\eta_{3} \in \Delta^{O}$. If $s^{-(k-1)}\eta_{2} \in \Delta_{-}$ and $s^{-(k-1)}\eta_{3} \in \Delta_{-}$ then $s^{-(k-1)}\eta_{1} = \gamma'$, $s^{-(k-1)}\eta \in \Delta^{C} \cup \Delta^{R}$ and $s^{-(k-1)}(\eta+\eta_{3}) \in \{ \alpha_{m}, \alpha_{m+p+2} \}$, a contradiction as $s^{-(k-1)}\eta \in \Delta^{O}$. This proves the result.
\end{proof}

\begin{lemma} \label{lemma:4.3c}
Let $2 \leq f'<f<k$, $\kappa \in \ol{\Delta}_{K}^{k}$, $\eta_{1} \in \ol{\Delta}_{K}^{f}$ and $\eta_{2} \in \ol{\Delta}_{K}^{k}$ such that $(\eta_{1},\eta_{2}) \in \mathcal{P}_{\kappa}$. If there exists $\eta_{3} \in \ol{\Delta}_{K}^{f'}$ and $\eta_{4} \in \ol{\Delta}_{K}^{k}$ such that $\eta_{3}+\eta_{4} \in \ol{\Delta}_{K}^{f}$ then $\eta_{1} \neq \eta_{3}+\eta_{4}$.
\end{lemma}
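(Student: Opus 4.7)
The plan is to argue by contradiction: assume $\eta_1 = \eta_3 + \eta_4$ and combine Lemma \ref{lemma:4.1b} with the observation from section 4.5 that $\gamma'$ is the unique root in $\Delta_{s^{-1}}$ expressible as a sum of two elements of $\Delta_{s^{-1}}$.

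First, I would work in type A coordinates by writing $\eta_3 = e_a - e_b$, $\eta_4 = e_b - e_c$, $\eta_2 = e_c - e_d$, so that $\eta_1 = e_a - e_c$ and $\kappa = e_a - e_d$. The auxiliary root $\eta_5 := \eta_4 + \eta_2 = e_b - e_d$ is genuinely a root, since $b=d$ would force $\kappa = \eta_3 \in \ol{\Delta}_K^{f'}$, contradicting $\kappa \in \ol{\Delta}_K^k$ with $k > f'$. Applying Lemma \ref{lemma:4.1b} to $\eta_2, \eta_4 \in \ol{\Delta}_K^k$ with $\eta_5 \in \Delta$ places $\eta_5 \in \ol{\Delta}_K^k \cap \Delta_3^O$, and applying $s^{-(k-1)}$ to $\eta_5 = \eta_2 + \eta_4$ expresses a root of $\Delta_{s^{-1}}$ as a sum of two elements of $\Delta_{s^{-1}}$; the observation in section 4.5 then forces $s^{-(k-1)}\eta_5 = \gamma'$, so $\eta_5 \in \mathcal{O}_{\gamma'} \cap \ol{\Delta}_K^k$.

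Rewriting $\kappa = \eta_3 + \eta_5$ and applying $s^{-(k-1)}$ once more yields
\[
s^{-(k-1)}\kappa = s^{-(k-1)}\eta_3 + \gamma'
\]
with $s^{-(k-1)}\kappa \in \Delta_{s^{-1}}$. If $s^{-(k-1)}\eta_3$ were in $\Delta_{s^{-1}}$, substituting the decomposition $\gamma' = s^{-(k-1)}\eta_2 + s^{-(k-1)}\eta_4$ would express $s^{-(k-1)}\kappa$ as a sum of three $\Delta_{s^{-1}}$-roots, again contradicting section 4.5. Hence $s^{-(k-1)}\eta_3 \notin \Delta_{s^{-1}}$, yet $s^{-(k-1)}\eta_3 = s^{-(k-1)}\kappa - \gamma'$ is still a root, which in type A forces $s^{-(k-1)}\kappa$ to share an index with $\gamma' = e_m - e_{m+p+3}$. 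I would then enumerate, for each of the four cases of Proposition \ref{prop:4.1a}, the roots of $\Delta_{s^{-1}}$ with row $m$ or column $m+p+3$ using the explicit lists from section 4.4, and verify case by case that $-s^{-(k-1)}\eta_3$ lies in $\Delta_{s^{-1}}$; since $s^{-1}$ sends $\Delta_{s^{-1}}$ into $\Delta_-$, it then follows that $s^{-k}\eta_3 \in \Delta_+$.

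The concluding step is the main obstacle: one must combine the orbit data $s^{-f'}\eta_3 \in \Delta_-$ (from $\eta_3 \in \ol{\Delta}_K^{f'}$), $s^{-(k-1)}\eta_3 \in \Delta_-$ and $s^{-k}\eta_3 \in \Delta_+$ with the cycle structure of $s$ on $\{1,\ldots,l+1\} \setminus F$, where $F = \{m+2,\ldots,m+p+1\}$ is the fixed set, to derive a numerical contradiction with $2 \leq f' < f < k$. I expect this to proceed by a further case-by-case analysis mirroring the style of Lemma \ref{lemma:4.3a} (i), possibly invoking Lemma \ref{lemma:4.1e} applied to the orbit of $\eta_3$ to constrain how $\eta_3$'s positive arcs can fit between the first negative step $f'$ and the next positive step $k$.
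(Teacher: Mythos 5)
Your opening agrees with the paper's own proof: assuming $\eta_{1}=\eta_{3}+\eta_{4}$, the root $\eta_{5}=\eta_{2}+\eta_{4}$ exists, Lemma \ref{lemma:4.1b} together with the uniqueness observation (only $\gamma'$ decomposes inside $\Delta_{s^{-1}}$) gives $s^{-(k-1)}\eta_{5}=\gamma'$, and your exclusion $s^{-(k-1)}\eta_{3}\notin\Delta_{s^{-1}}$ is sound. From that point on, however, you have a plan rather than a proof, and its pivotal step is not verifiable. You propose to enumerate the possible values of $s^{-(k-1)}\kappa$ (elements of $\Delta_{s^{-1}}$ sharing row $m$ or column $m+p+3$ with $\gamma'$) and to ``verify case by case'' that $-s^{-(k-1)}\eta_{3}\in\Delta_{s^{-1}}$, whence $s^{-k}\eta_{3}\in\Delta_{+}$. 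That verification fails: in case (i) of Proposition \ref{prop:4.1a}, $s^{-(k-1)}\kappa=\alpha_{m+p+2}$ is one of the values your enumeration must consider (it lies in $\Delta_{s^{-1}}$ and has column $m+p+3$), and it gives $-s^{-(k-1)}\eta_{3}=\alpha_{m}+\cdots+\alpha_{m+p+1}$, which is \emph{not} in $\Delta_{s^{-1}}$ (the row-$m$ members of the list in \ref{sect: 4.4} have columns $m+1,\ldots,m+p+1$ and $m+p+3$, never $m+p+2$); symmetrically, $s^{-(k-1)}\kappa=\alpha_{m}$ gives $-s^{-(k-1)}\eta_{3}=\alpha_{m+1}+\cdots+\alpha_{m+p+2}\notin\Delta_{s^{-1}}$. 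These are precisely the configurations in which $\eta_{3}$ lies in the exceptional orbits $\mathcal{O}_{1},\mathcal{O}_{2}$ of Lemma \ref{lemma:4.1e}, and nothing you have derived up to that point excludes them. Handling them is pushed into your concluding step, which you yourself call ``the main obstacle'' and leave entirely unexecuted; so the proposal is incomplete, and its middle claim is false as stated.

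For comparison, the paper finishes without any enumeration of the $\Delta_{s^{-1}}$ lists and without orbit counting. From $\gamma'=s^{-(k-1)}\eta_{2}+s^{-(k-1)}\eta_{4}$ one summand lies in $\Delta^{C}$ and the other in $\Delta^{R}$; the paper then runs a two-case analysis on which class $s^{-(k-1)}\eta_{2}$ belongs to, using exactly two further facts: $\kappa\in\Delta^{O}$ (Lemma \ref{lemma:4.1d}) and $s^{-(k-1)}\eta_{1}\in\Delta_{-}$, which holds because $d(\eta_{1})=f<k$ --- note it is the negativity of the image of $\eta_{1}$, not of $\eta_{3}$, that does the work. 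In every branch one is forced either to $s^{-(k-1)}\kappa=\gamma'$, which puts $s^{-(k-1)}\eta_{1}\in\Delta_{s^{-1}}\subset\Delta_{+}$, a contradiction, or to $s^{-(k-1)}\kappa$ having a fixed row or column, contradicting $\kappa\in\Delta^{O}$. If you want to repair your write-up, replace your two unexecuted steps by this $\Delta^{C}/\Delta^{R}/\Delta^{O}$ bookkeeping applied to $\eta_{1}$; in particular the exceptional configurations above are then killed by the negativity of $s^{-(k-1)}\eta_{1}$ rather than by any property of $\eta_{3}$.
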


\begin{proof}
Suppose $\eta_{1} = \eta_{3} + \eta_{4}$, then $s^{-(k-1)}\kappa = s^{-(k-1)}\eta_{3} + s^{-(k-1)}\eta_{4} + s^{-(k-1)}\eta_{2} \in \Delta_{s^{-1}}$. As $s^{-(k-1)}\eta_{3} \in \Delta_{-}$ and $s^{-(k-1)}\eta_{2},s^{-(k-1)}\eta_{4} \in \Delta_{s^{-1}}$ implies $s^{-(k-1)}\eta_{2} + s^{-(k-1)}\eta_{4} \in \Delta_{s^{-1}}$, i.e. $s^{-(k-1)}\eta_{2} + s^{-(k-1)}\eta_{4} = \gamma'$ and thus $s^{-(k-1)}\eta_{2}, s^{-(k-1)}\eta_{4} \in \Delta^{C}\cup\Delta^{R}$. Suppose $s^{-(k-1)}\eta_{2} \in \Delta^{C}$. As $s^{-(k-1)}\kappa \in \Delta^{O}$ by Lemma \ref{lemma:4.1d} and $s^{-(k-1)}\kappa = s^{-(k-1)}\eta_{1} + s^{-(k-1)}\eta_{2}$ then either $s^{-(k-1)}\eta_{1} \in \Delta^{R}$ or $s^{-(k-1)}\eta_{1} \in \Delta_{1}^{O}$. If $s^{-(k-1)}\eta_{1} \in \Delta^{R}$ then $s^{-(k-1)}\kappa \in \Delta_{3}^{O} \cap \Delta_{s^{-1}}$ which implies $s^{-(k-1)}\kappa = \gamma'$ and thus $s^{-(k-1)}\eta_{1} \in \Delta_{s^{-1}}$, a contradiction as $s^{-(k-1)}\eta_{1} \in \Delta_{-}$. If $s^{-(k-1)}\eta_{1} \in \Delta_{1}^{O}$ then $s^{-(k-1)}\kappa \in \Delta^{C}$, a contradiction as $\kappa \in \Delta^{O}$. Similarly, suppose $s^{-(k-1)}\eta_{2} \in \Delta^{R}$ then either $s^{-(k-1)}\eta_{1} \in \Delta^{C}$ or $s^{-(k-1)}\eta_{1} \in \Delta_{2}^{O}$. If $s^{-(k-1)}\eta_{1} \in \Delta^{C}$ then $s^{-(k-1)}\kappa \in \Delta_{3}^{O} \cap \Delta_{s^{-1}}$ which implies $s^{-(k-1)}\kappa = \gamma'$ and thus $s^{-(k-1)}\eta_{1} \in \Delta_{s^{-1}}$, a contradiction as $s^{-(k-1)}\eta_{1} \in \Delta_{-}$. If $s^{-(k-1)}\eta_{1} \in \Delta_{2}^{O}$ then $s^{-(k-1)}\kappa \in \Delta^{R}$, a contradiction as $\kappa \in \Delta^{O}$.
\end{proof}
\hfill

\subsection{} \label{sect: 4.16}
For each $\alpha \in (\ol{\Delta}_{K})_{+}$ define the set
$$
\mathcal{P}(\alpha) = \{ (\eta, \eta') \in (\ol{\Delta}_{K})_{+} \times (\ol{\Delta}_{K})_{+} : \alpha = \eta + \eta' \}.
$$
It is important to make clear the following basic observation as we will use it repeatedly throughout the next section. Let $X_{\eta}(c_{\eta}) \in N$ and $u = X_{\eta_{1}}(c_{\eta_{1}}) \cdots X_{\eta_{t}}(c_{\eta_{t}})$, such that $\eta_{1}, \ldots, \eta_{t} \in (\ol{\Delta}_{K})_{+}$, be any product of one-parameter subgroups. Suppose
$$
X_{\eta}(c_{\eta})u = u'X_{\eta}(c_{\eta}),
$$
for some $u' \in N$, then $X_{\alpha}(b_{\alpha}) \in \mathscr{X}(u') \backslash \mathscr{X}(u)$ if and only if there exists $X_{\eta_{i}}(c_{\eta_{i}}) \in \mathscr{X}(u)$ such that $(\eta,\eta_{i}) \in \mathcal{P}(\alpha)$. Moreover, if $(\eta,\eta_{i}) \in \mathcal{P}(\alpha)$ then $b_{\alpha} = c_{\eta}c_{\eta_{i}}$. \\

\section{Invariants}
\subsection{}
Throughout this section we fix $2 \leq k \leq D+1$ and $\kappa \in \ol{\Delta}_{K}^{k}$. Write $v(1) = v_{D+1} \cdots v_{k}$ so that 
$$
n'_{k} \cdots n'_{2}v_{D+1} \cdots v_{k} = n'_{k} \cdots n'_{2}v(1).
$$
In this section we rearrange one-parameter subgroups in the above product to one respecting the order $\preceq$ described in \ref{sect: 4.10}. In \ref{sect: 5.2} and \ref{sect: 5.3} below we spell out two particular rearrangements of one-parameter subgroups. \\

\subsection{} \label{sect: 5.2}
Let $X_{\eta}(c_{\eta}) \in N$ and $u = X_{\eta_{1}}(c_{\eta_{1}}) \cdots X_{\eta_{t}}(c_{\eta_{t}})$, such that $\eta_{1}, \ldots, \eta_{t} \in (\ol{\Delta}_{K})_{+}$, be any product of one-parameter subgroups that satisfies $d(\eta_{i}) \geq d(\eta)$, for each $1 \leq i \leq t$. Consider moving the one-parameter subgroup $X_{\eta}(c_{\eta})$ to the right of $u$. This gives the equation 
$$
X_{\eta}(c_{\eta})u = u(1)X_{\eta}(c_{\eta}),
$$ 
for some $u(1) \in N$. Observe that if $X_{\alpha}(b_{\alpha}) \in \mathscr{X}(u(1))$ then $d(\alpha) \geq d(\eta)$. Let 
$$
\mathscr{X}_{d(\eta)}(u(1)) \backslash \mathscr{X}_{d(\eta)}(u) = \{ X_{\eta'_{1}}(b_{\eta'_{1}}), \ldots, X_{\eta'_{r}}(b_{\eta'_{r}}) \}
$$ 
and, without loss of generality, suppose $X_{\eta'_{1}}(b_{\eta'_{1}}) \preceq \cdots \preceq X_{\eta'_{r}}(b_{\eta'_{r}})$. It follows from Lemma \ref{lemma:4.1d} that $\eta'_{1}, \ldots, \eta'_{r} \in \Delta^{O}$. Consider the rearrangements
\begin{IEEEeqnarray*}{rCl}
u(1)X_{\eta}(c_{\eta}) &=& u(2)X_{\eta'_{r}}(b_{\eta'_{r}})X_{\eta}(c_{\eta}) \\
&=& u(3)X_{\eta'_{r-1}}(b_{\eta'_{r-1}})X_{\eta'_{r}}(b_{\eta'_{r}})X_{\eta}(c_{\eta}) \\
&\vdots& \\
&=& u(r+1)X_{\eta'_{1}}(b_{\eta'_{1}}) \cdots X_{\eta'_{r}}(b_{\eta'_{r}})X_{\eta}(c_{\eta}),
\end{IEEEeqnarray*}
for some $u(2), \ldots, u(r+1) \in N$. It follows from Lemma \ref{lemma:4.3a} (i) that $\mathscr{X}_{d(\eta)}(u(j)) \backslash \mathscr{X}_{d(\eta)}(u(j-1)) = \emptyset$, for each $2 \leq j \leq r+1$. The rearrangement $X_{\eta}(c_{\eta})u = u(r+1)X_{\eta'_{1}}(b_{\eta'_{1}}) \cdots X_{\eta'_{r}}(b_{\eta'_{r}})X_{\eta}(c_{\eta})$ just described above will simply be referred to as \ref{sect: 5.2}. \\

\subsection{} \label{sect: 5.3}
Now let $X_{\eta}(c_{\eta}) \in N$ and $u = X_{\eta_{1}}(c_{\eta_{1}}) \cdots X_{\eta_{t}}(c_{\eta_{t}})$, such that $\eta_{1}, \ldots, \eta_{t} \in (\ol{\Delta}_{K})_{+}$, be any product of one-parameter subgroups that satisfies $d(\eta_{i}) \leq d(\eta)$, for each $1 \leq i \leq t$. Consider moving the one-parameter subgroup $X_{\eta}(c_{\eta})$ to the left of $u$. This gives the equation 
$$
uX_{\eta}(c_{\eta}) = X_{\eta}(c_{\eta})u(1),
$$ 
for some $u(1) \in N$. Observe that if $X_{\alpha}(b_{\alpha}) \in \mathscr{X}(u(1))$ then $d(\alpha) \leq d(\eta)$. Let 
$$
\mathscr{X}_{d(\eta)}(u(1)) \backslash \mathscr{X}_{d(\eta)}(u) = \{ X_{\eta'_{1}}(b_{\eta'_{1}}), \ldots, X_{\eta'_{r}}(b_{\eta'_{r}}) \}
$$ 
and, without loss of generality, suppose $X_{\eta'_{1}}(b_{\eta'_{1}}) \preceq \cdots \preceq X_{\eta'_{r}}(b_{\eta'_{r}})$. It follows from Lemma \ref{lemma:4.1d} that $\eta'_{1}, \ldots, \eta'_{r} \in \Delta^{O}$. Consider the rearrangements
\begin{IEEEeqnarray*}{rCl}
X_{\eta}(c_{\eta})u(1) &=& X_{\eta}(c_{\eta})X_{\eta'_{1}}(b_{\eta'_{1}})u(2) \\
&=& X_{\eta}(c_{\eta})X_{\eta'_{1}}(b_{\eta'_{1}})X_{\eta'_{2}}(b_{\eta'_{2}})u(3) \\
&\vdots& \\
&=& X_{\eta}(c_{\eta})X_{\eta'_{1}}(b_{\eta'_{1}}) \cdots X_{\eta'_{r}}(b_{\eta'_{r}})u(r+1),
\end{IEEEeqnarray*}
for some $u(2), \ldots, u(r+1) \in N$. It follows from Lemma \ref{lemma:4.3b} that $\mathscr{X}_{d(\eta)}(u(j)) \backslash \mathscr{X}_{d(\eta)}(u(j-1)) = \emptyset$, for each $2 \leq j \leq r+1$. The rearrangement $uX_{\eta}(c_{\eta}) = X_{\eta}(c_{\eta})X_{\eta'_{1}}(b_{\eta'_{1}}) \cdots X_{\eta'_{r}}(b_{\eta'_{r}})u(r+1)$ just described above will simply be referred to as \ref{sect: 5.3}. \\

\subsection{} \label{sect: 5.4}
Define the following rule:
\begin{equation} \label{star}
\text{\parbox{.85\textwidth}{For each $2 \leq q \leq k$ and any $\eta, \eta' \in \ol{\Delta}_{K}^{q}$ say that $X_{\eta'}(c_{\eta'})$ is moved before $X_{\eta}(c_{\eta})$ if $X_{\eta}(c_{\eta}) \preceq X_{\eta'}(c_{\eta'})$. Moreover, each one-parameter subgroup $X_{\eta}(c_{\eta})$ is moved according to \ref{sect: 5.2}.} \tag{$\star$}}
\end{equation}

First consider the product $n'_{2}v(1)$ and move the one-parameter subgroups in the set $\mathscr{X}(n'_{2})$ to the right of $v(1)$ according to (\ref{star}). This gives the equation
$$
n'_{2}v(1)=v(2,2)n_{2}^{(2)},
$$
for some $v(2,2) \in N$. Next, move all one-parameter subgroups in $v(2,2)$ that appear in the set $\mathscr{X}_{3}(v(2,2)) \backslash \mathscr{X}_{3}(v(1))$ to the right of $v(2,2)$ according to (\ref{star}). This gives the equation 
$$
v(2,2)n_{2}^{(2)} = v(2,3)n_{3}^{(2)}n_{2}^{(2)},
$$
for some $v(2,3) \in N$ and $n_{3}^{(2)} \in N_{3}$. For $4 \leq f \leq k$, move all one-parameter subgroups in $v(2,f-1)$ that appear in the set $\mathscr{X}_{f}(v(2,f-1)) \backslash \mathscr{X}_{f}(v(1))$ to the right of $v(2,f-1)$ according to (\ref{star}). This gives the equation 
$$
v(2,f-1)n_{f-1}^{(2)} \cdots n_{2}^{(2)} = v(2,f)n_{f}^{(2)} \cdots n_{2}^{(2)},
$$
for some $v(2,f) \in N$. When $f=k$ write $v(2,k) = v(2)$. For $3 \leq q \leq k$, consider the product $n'_{q}v(q-1)$ and move the one-parameter subgroups in $n'_{q}$ according to (\ref{star}). This gives the equation
$$
n'_{q}v(q-1)=v(q,q)n_{q}^{(q)},
$$
for some $v(q,q) \in N$. Next, move all one-parameter subgroups in $v(q,q)$ that appear in the set $\mathscr{X}_{q+1}(v(q,q)) \backslash \mathscr{X}_{q+1}(v(q-1))$ to the right of $v(q,q)$ according to (\ref{star}). This gives the equation 
$$
v(q,q)n_{q}^{(q)} = v(q,q+1)n_{q+1}^{(q)}n_{q}^{(q)},
$$
for some $v(q,q+1) \in N$ and $n_{q+1}^{(q)} \in N_{q+1}$. For $q+1 \leq f \leq k$, move all one-parameter subgroups in $v(q,f-1)$ that appear in the set $\mathscr{X}_{f}(v(q,f-1)) \backslash \mathscr{X}_{f}(v(q-1))$ to the right of $v(q,f-1)$ according to (\ref{star}). This gives the equation 
$$
v(q,f-1)n_{f-1}^{(q)} \cdots n_{q}^{(q)} = v(q,f)n_{f}^{(q)} \cdots n_{q}^{(q)},
$$
for some $v(q,f) \in N$. When $f=k$ write $v(q) = v(q,k)$. The product $n'_{k} \cdots n'_{2}v(1)$ can now be rearranged as
\begin{IEEEeqnarray*}{rCl}
n'_{k} \cdots n'_{2}v(1) &=& n'_{k} \cdots n'_{3}v(2,2)n_{2}^{(2)} \\
&=& n'_{k} \cdots n'_{3}v(2,3)n_{3}^{(2)}n_{2}^{(2)} \\
&\vdots & \\
&=& n'_{k} \cdots n'_{3}v(2,k)n_{k}^{(2)} \cdots n_{2}^{(2)} \\
&=& n'_{k} \cdots n'_{3}v(2)n_{k}^{(2)} \cdots n_{2}^{(2)}.
\end{IEEEeqnarray*}
Repeating this rearrangement with respect to the terms $n'_{3}, \ldots, n'_{k}$ gives
\begin{equation} \label{eq:5.1a}
n'_{k} \cdots n'_{2}v(1) = v(k)(n_{k}^{(k)})(n_{k}^{(k-1)}n_{k-1}^{(k-1)})(n_{k}^{(k-2)}n_{k-1}^{(k-2)}n_{k-2}^{(k-2)}) \cdots (n_{k}^{(2)} \cdots n_{2}^{(2)}).
\end{equation}
Observe that, for each $2 \leq q \leq f \leq k$, the one-parameter subgroups appearing in $n_{f}^{(q)}$ can be rearranged according to $\preceq$ without any other one-parameter subgroups appearing. Define 
$$
n(q,f) = (n_{f}^{(q)} \cdots n_{q}^{(q)}) \cdots (n_{k}^{(2)} \cdots n_{2}^{(2)})
$$
and $n(q) = n(q,k)$ so that the equation (\ref{eq:5.1a}) can be written 
$$
n'_{k} \cdots n'_{2}v(1) = v(k)n(k).
$$

Define the following rule:
\begin{equation} \label{star2}
\text{\parbox{.85\textwidth}{For each $2 \leq q \leq k$ and any $\eta, \eta' \in \ol{\Delta}_{K}^{q}$ say that $X_{\eta}(c_{\eta})$ is moved before $X_{\eta'}(c_{\eta'})$ if $X_{\eta}(c_{\eta}) \preceq X_{\eta'}(c_{\eta'})$. Moreover, each one-parameter subgroup $X_{\eta}(c_{\eta})$ is moved according to \ref{sect: 5.3}.} \tag{$\star\star$}}
\end{equation}

Next, move the one-parameter subgroups in $n(k)$ that appear in $\mathscr{X}_{k}(n(k))$ to the left of $n(k)$ with respect to the rule (\ref{star2}). This gives the equation
$$
v(k)n(k) = v(k)n'n''
$$
for some $n' \in N_{k}$ and $n'' \in N$. The one-parameter subgroups appearing in the product $n'$ can be rearranged according to $\preceq$ without any other one-parameter subgroups appearing. Moreover, it follows from Lemma \ref{lemma:4.3b} that $\mathscr{X}_{k}(n'') \cup \cdots \cup \mathscr{X}_{D+1}(n'') = \emptyset$. Note that the term $v(k)$ can be written as a product $v(k) = v''v_{k}$ for some $v'' \in N$ such that $\mathscr{X}_{1}(v'') \cup \cdots \cup \mathscr{X}_{k}(v'') = \emptyset$. Thus $v(k)n(k) = v''v_{k}n'n''$. Finally, let $v_{k}n' = \ol{n}_{k}$ where $\ol{n}_{k}$ is the rearrangement of the one-parameter subgroups in product $v_{k}n'$ according to $\preceq$. This gives
$$
n_{k} = (n'_{k} \cdots n'_{2}v(1))_{k} = (v''\ol{n}_{k}n'')_{k} = \ol{n}_{k}
$$
where
$$
\ol{n}_{k} = \prod_{\kappa \in \ol{\Delta}_{K}^{k}} X_{\kappa}(\ol{c}_{\kappa})
$$
for some $\ol{c}_{\kappa} \in \mathbb{C}$. \\

\subsection{}
It is our aim to deduce the terms $\ol{c}_{\kappa}$ above. To do this we first consider an auxiliary rearrangement similar to the one described in \ref{sect: 5.2}. Let $u = X_{\eta_{1}}(c_{\eta_{1}}) \cdots X_{\eta_{t}}(c_{\eta_{t}})$, such that $\eta_{1}, \ldots, \eta_{t} \in (\ol{\Delta}_{K})_{+}$, be any product of one-parameter subgroups and $X_{\eta}(c_{\eta}) \in \mathscr{X}(u)$. Denote by $u''$ the product $u$ with the one-parameter subgroup $X_{\eta}(c_{\eta})$ removed. Say
$$
X_{\eta}(c_{\eta})u'' = u'(1)X_{\eta}(c_{\eta})
$$ 
for some $u'(1) \in N$. Let  
$$
\mathscr{X}_{d(\eta)}(u'(1)) \backslash \mathscr{X}_{d(\eta)}(u'') = \{ X_{\eta'_{1}}(c'_{\eta'_{1}}), \ldots, X_{\eta'_{r}}(c'_{\eta'_{r}}) \}
$$ 
and, without loss of generality, suppose $X_{\eta'_{1}}(c'_{\eta'_{1}}) \preceq \cdots \preceq X_{\eta'_{r}}(c'_{\eta'_{r}})$. Define $u''(1)$ to be the product $u'(1)$ with the one-parameter subgroup $X_{\eta'_{r}}(c'_{\eta'_{r}})$ removed. Next, consider the product 
$$
X_{\eta'_{r}}(c'_{\eta'_{r}})u''(1)X_{\eta}(c_{\eta}).
$$
Move the one-parameter subgroup $X_{\eta'_{r}}(c'_{\eta'_{r}})$ to the right of $u''(1)$ to give
$$
X_{\eta'_{r}}(c'_{\eta'_{r}})u''(1)X_{\eta}(c_{\eta}) = u'(2)X_{\eta'_{r}}(c'_{\eta'_{r}})X_{\eta}(c_{\eta})
$$
for some $u'(2) \in N$. Define $u''(2)$ to be the product $u'(2)$ with the one-parameter subgroup $X_{\eta'_{r-1}}(c'_{\eta'_{r-1}})$ removed. For $3 \leq j \leq r+1$ let $u'(j)$ be recursively defined by
\begin{IEEEeqnarray}{rCl}\label{aux}
&& X_{\eta'_{r-j+2}}(c'_{\eta'_{r-j+2}})u''(j-1)X_{\eta'_{r-j+3}}(c'_{\eta'_{r-j+3}}) \cdots X_{\eta'_{r}}(c'_{\eta'_{r}})X_{\eta}(c_{\eta}) \nonumber \\
&=& u'(j)X_{\eta'_{r-j+2}}(c'_{\eta_{r-j+2}})X_{\eta'_{r-j+3}}(c'_{\eta_{r-j+3}}) \cdots X_{\eta'_{r}}(c'_{\eta'_{r}})X_{\eta}(c_{\eta})
\end{IEEEeqnarray}
where $u''(j-1)$ is defined to be the product $u'(j-1)$ with the one-parameter subgroup $X_{\eta'_{r-j+2}}(c'_{\eta'_{r-j+2}})$ removed. Repeating the rearrangement (\ref{aux}) for $j = 3, \ldots, r+1$ one arrives at the equation 
\begin{IEEEeqnarray*}{rCl}
X_{\eta'_{1}}(c'_{\eta'_{1}})u''(r)X_{\eta'_{2}}(c'_{\eta'_{2}}) \cdots X_{\eta'_{r}}(c'_{\eta'_{r}})X_{\eta}(c_{\eta}) &=& u'(r+1)X_{\eta'_{1}}(c'_{\eta'_{1}}) \cdots X_{\eta'_{r}}(c'_{\eta'_{r}})X_{\eta}(c_{\eta}) \\
&=& u'(r+1)x_{\eta}X_{\eta}(c_{\eta})
\end{IEEEeqnarray*}
where $x_{\eta} = X_{\eta'_{1}}(c'_{\eta'_{1}}) \cdots X_{\eta'_{r}}(c'_{\eta'_{r}})$. \\

\subsection{}
Suppose $\mathscr{X}_{f}(v(q,f-1)) \backslash \mathscr{X}_{f}(v(q-1)) = \{ X_{\eta_{1}}(c_{\eta_{1}}), \ldots, X_{\eta_{t}}(c_{\eta_{t}}) \}$ such that $X_{\eta_{1}}(c_{\eta_{1}}) \preceq \cdots \preceq X_{\eta_{t}}(c_{\eta_{t}})$. Applying the above auxiliary rearrangement to $u = v(q,f-1)$ and $X_{\eta}(c_{\eta}) = X_{\eta_{t}}(c_{\eta_{t}})$ gives the product
$$
v(q,f-1)'(r_{t}+1)x_{\eta_{t}}X_{\eta_{t}}(c_{\eta_{t}}).
$$
Write $v(q,f-1)'_{\eta_{t}} = v(q,f-1)'(r_{t}+1)$. Next, applying the auxiliary rearrangement to $u=v(q,f-1)'(r_{t}+1)$ and $X_{\eta}(c_{\eta}) = X_{\eta_{t-1}}(c_{\eta_{t-1}})$ gives the product 
$$
v(q,f-1)'(r_{t}+1)'(r_{t-1}+1)x_{\eta_{t-1}}X_{\eta_{t-1}}(c_{\eta_{t-1}})x_{\eta_{t}}X_{\eta_{t}}(c_{\eta_{t}}).
$$
Write $v(q,f-1)'_{\eta_{t-1}} = v(q,f-1)'(r_{t}+1)'(r_{t-1}+1)$. Continuing these auxiliary rearrangements for $X_{\eta_{t-2}}(c_{\eta_{t-2}}), \ldots, X_{\eta_{1}}(c_{\eta_{1}})$ one arrives at the product
$$
v'(q,f)x_{\eta_{1}}X_{\eta_{1}}(c_{\eta_{1}}) \cdots x_{\eta_{t}}X_{\eta_{t}}(c_{\eta_{t}})
$$
where $v'(q,f) = v(q,f-1)'_{\eta_{1}} = v(q,f-1)'(r_{t}+1)' \cdots (r_{2}+1)'(r_{1}+1)$. We write
$$
x_{\eta_{1}}X_{\eta_{1}}(c_{\eta_{1}}) \cdots x_{\eta_{t}}X_{\eta_{t}}(c_{\eta_{t}}) = (n')_{f}^{(q)}
$$ 
where the right-hand side of this equation is the rearrangement of one-parameter subgroups on the left-hand side according to $\preceq$. Note that this can be done without any other one-parameter subgroups appearing. Finally, define $v'_{\eta_{t}}(q,f-1) = v'(q,f-1)$ and $v'_{\eta_{i}}(q,f-1) = v'(q,f-1)_{\eta_{i+1}}$, for each $1 \leq i \leq t-1$. \\

\subsection{}
For each $2 \leq q \leq f \leq k$ and $\eta \in (\ol{\Delta}_{K})_{+}$ we now explicitly describe the set $\mathscr{X}_{\eta}((n')_{f}^{(q)})$ from which we may easily deduce $\mathscr{X}_{\eta}(n_{f}^{(q)})$. For each root $\alpha \in \Delta$ define the subsets
\begin{align*}
\text{Row}(\alpha) &= \{ \eta \in \Delta : \text{row}(\eta) = \text{row}(\alpha),\ \text{col}(\eta)<\text{col}(\alpha) \}, \\
\text{Col}(\alpha) &= \{ \eta \in \Delta : \text{col}(\eta) = \text{col}(\alpha),\ \text{row}(\eta)>\text{row}(\alpha) \}
\end{align*}
of $\Delta$ and the subsets
\begin{IEEEeqnarray*}{rCl}
\mathcal{C}_{\eta}^{q,f} &=& \{ (\eta_{1}, \eta_{2}) \in \text{Col}(\eta) \times \text{Row}(\eta) : \eta_{1} \in \bigcup_{i=q}^{f} \ol{\Delta}_{K}^{i}, \eta_{2} \in \bigcup_{i=d(\eta)}^{D+1} \ol{\Delta}_{K}^{i}, \eta = \eta_{1} + \eta_{2} \}, \\
\mathcal{R}_{\eta}^{q,f} &=& \{ (\eta_{1}, \eta_{2}) \in \text{Row}(\eta) \times \text{Col}(\eta) : \eta_{1} \in \bigcup_{i=q}^{f} \ol{\Delta}_{K}^{i}, \eta_{2} \in \bigcup_{i=d(\eta)}^{D+1} \ol{\Delta}_{K}^{i}, \eta = \eta_{1} + \eta_{2} \}, \\
\mathcal{P}_{\eta}^{q,f} &=& \mathcal{C}_{\eta}^{q,f} \cup \mathcal{R}_{\eta}^{q,f}
\end{IEEEeqnarray*}
of $(\ol{\Delta}_{K})_{+} \times (\ol{\Delta}_{K})_{+}$, where we write $\mathcal{C}_{\eta}^{f}$, $\mathcal{R}_{\eta}^{f}$ and $\mathcal{P}_{\eta}^{f}$ when $q=f$. Observe that if $(\eta_{1}, \eta_{2}) \in \mathcal{P}_{\eta}^{q,f}$ then $\vert \mathcal{P}_{\eta_{1}}^{q,f} \vert, \vert \mathcal{P}_{\eta_{2}}^{q,f} \vert < \vert \mathcal{P}_{\eta}^{q,f} \vert$ and, moreover, one has $\mathcal{P}_{\eta}^{q,f} = \mathcal{P}_{\eta}^{q} \cup \mathcal{P}_{\eta}^{q+1} \cup \cdots \cup \mathcal{P}_{\eta}^{f}$. Clearly, the preceding sentence may be stated similarly for $\mathcal{C}_{\eta}^{q,f}$ and $\mathcal{R}_{\eta}^{q,f}$. \\
For all $2 \leq q \leq f \leq k$ and $\eta \in (\ol{\Delta}_{K})_{+}$ define, recursively, the term $B_{\eta}^{q,f}$ in the free algebra $\CC\langle c_{\alpha}, c'_{\alpha} : \alpha \in (\ol{\Delta}_{K})_{+} \rangle$ by 
$$ \arraycolsep=1.4pt\def\arraystretch{1.2}
B_{\eta}^{q,f} = \left\{\begin{array}{ll} 
B_{\eta}^{q,f-1}, & \text{if $\vert \mathcal{P}_{\eta}^{f} \vert = 0$} \\
B_{\eta}^{q,f-1} + \sum_{(\eta_{1}, \eta_{2}) \in \mathcal{C}_{\eta}^{f}} B_{\eta_{1}}^{q,f}B_{\eta_{2}}^{q,f-1} + \sum_{(\eta_{1}, \eta_{2}) \in \mathcal{R}_{\eta}^{f}} B_{\eta_{1}}^{q,f}B_{\eta_{2}}^{q,f}, & \text{if $\vert \mathcal{P}_{\eta}^{f} \vert > 0$}
\end{array}\right.
$$
where
$$
B_{\eta}^{2,2} = c'_{s^{-1}\eta} + \sum_{(\eta_{1}, \eta_{2}) \in \mathcal{P}_{\eta}^{2}} c'_{s^{-1}\eta_{1}}c_{\eta_{2}}
$$
and we define $B_{\eta}^{q,q-1} = B_{\eta}^{q-1,k}$. \\
Recall the orbits $\mathcal{O}_{1}, \mathcal{O}_{2} \subseteq (\ol{\Delta}_{K})_{+}$ defined in Lemma \ref{lemma:4.1e}, they will be used in the following lemma.

\begin{lemma} \label{lemma: 5.7.1}
Let $2 \leq f \leq k$, $\eta, \eta_{1} \in \ol{\Delta}_{K}^{f}$ and $\eta_{2} \in (\ol{\Delta}_{K})_{+}$ such that $\eta = \eta_{1} + \eta_{2}$ then $\eta_{2} \in \mathcal{O}_{1}\cup\mathcal{O}_{2}$.
\end{lemma}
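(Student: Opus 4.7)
The plan is to apply $s^{-(f-1)}$ to the identity $\eta = \eta_{1}+\eta_{2}$. Since both $\eta$ and $\eta_{1}$ lie in $\ol{\Delta}_{K}^{f}$, their images $\xi := s^{-(f-1)}\eta$ and $\xi_{1} := s^{-(f-1)}\eta_{1}$ belong to $\Delta_{s^{-1}}$, so the problem reduces to understanding decompositions $\xi = \xi_{1} + \xi_{2}$ inside $\Delta$ with $\xi,\xi_{1}\in\Delta_{s^{-1}}$, where $\xi_{2}:=s^{-(f-1)}\eta_{2}$. The key ingredients will be the four explicit lists of $\Delta_{s^{-1}}$ given in \ref{sect: 4.4}, one for each case of Proposition \ref{prop:4.1a}, together with the fact recalled immediately after those lists that $\gamma'$ is the \emph{unique} element of $\Delta_{s^{-1}}$ expressible as a sum of two roots from $\Delta_{s^{-1}}$.

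The next step is a case split on the sign of $\xi_{2}$ and on whether $\xi_{2}$ (or $-\xi_{2}$) itself lies in $\Delta_{s^{-1}}$. If $\xi_{2}\in\Delta_{+}\cap\Delta_{s^{-1}}$, the uniqueness property forces $\xi=\gamma'$, and one reads off the candidates for $\xi_{2}$ by listing all pairs in $\Delta_{s^{-1}}$ summing to $\gamma'$; if $\xi_{2}\in\Delta_{+}\setminus\Delta_{s^{-1}}$, one identifies $\xi_{2}=\xi-\xi_{1}$ directly from the explicit listing of $\Delta_{s^{-1}}$. The two mirror sub-cases with $\xi_{2}\in\Delta_{-}$ are handled by rewriting the relation as $\xi_{1}=\xi+(-\xi_{2})$ and running the same analysis, with the role of $\gamma'$ now potentially played by $\xi_{1}$. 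In every sub-case one then applies $s^{f-1}$ to recover $\eta_{2}$ and compares its $\langle s\rangle$-orbit with $\mathcal{O}_{1}$ and $\mathcal{O}_{2}$, which are the orbits of $\alpha_{m}+\cdots+\alpha_{m+p+1}$ and $\alpha_{m+1}+\cdots+\alpha_{m+p+2}$ identified in the proof of Lemma \ref{lemma:4.1e}.

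The main obstacle is the bookkeeping across the four cases of Proposition \ref{prop:4.1a}, since each yields a slightly different explicit list of $\Delta_{s^{-1}}$ and hence a different family of decompositions of $\gamma'$. The most delicate step is the final orbit check: for every candidate $\xi_{2}$ produced, one must verify explicitly that its $\langle s\rangle$-orbit coincides with $\mathcal{O}_{1}$ or $\mathcal{O}_{2}$. I expect this to be the hardest part of the argument, and I would attack it by matching each decomposition of $\gamma'$ (and of $\xi_{1}$ in the mirror cases) against a representative of $\mathcal{O}_{1}\cup\mathcal{O}_{2}$, using the explicit description of $\mathcal{O}_{\gamma'}\cap(\ol{\Delta}_{K})_{+}$ in the subsection following \ref{sect: 4.4} as a model for how such orbit computations are carried out in type A.
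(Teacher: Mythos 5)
Your opening reduction --- applying $s^{-(f-1)}$ so that $\xi = s^{-(f-1)}\eta$ and $\xi_{1} = s^{-(f-1)}\eta_{1}$ land in $\Delta_{s^{-1}}$, and then settling the claim by a finite check against the explicit root data --- is exactly the paper's route: the paper's entire proof consists of this reduction, the assertion that the two memberships force $s^{-(f-1)}\eta \in s_{1}\Delta_{s_{2}}$, and the sentence ``this can be checked on a case-by-case basis.'' Your replacement for the middle step (the uniqueness of $\gamma'$ as a sum of two roots of $\Delta_{s^{-1}}$, plus a sign split on $\xi_{2}$, instead of the decomposition $\Delta_{s^{-1}} = \Delta_{s_{1}} \cup s_{1}\Delta_{s_{2}}$) is a reorganisation of the same method, not a different one.

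The difficulty is precisely the step you defer as ``the most delicate'': the final orbit verification, in the sub-case $\xi_{2} \in \Delta_{+}\cap\Delta_{s^{-1}}$, where uniqueness gives $\xi = \gamma'$. The pairs of roots of $\Delta_{s^{-1}}$ summing to $\gamma'$ are $(\alpha_{m}+\cdots+\alpha_{j},\ \alpha_{j+1}+\cdots+\alpha_{m+p+2})$ with $m+1 \leq j \leq m+p$, so every candidate $\xi_{2}$ lies in $\Delta^{C}\cup\Delta^{R}$: its row or its column belongs to the block $\{m+2,\ldots,m+p+1\}$ of indices fixed by the permutation $s$. Since $s$ fixes those indices, every root in the $\langle s\rangle$-orbit of such a $\xi_{2}$ again has that fixed row or column, whereas every root of $\mathcal{O}_{1}$ (the orbit of $\alpha_{m}+\cdots+\alpha_{m+p+1}$) and of $\mathcal{O}_{2}$ (the orbit of $\alpha_{m+1}+\cdots+\alpha_{m+p+2}$) has both row and column in the support of the cycle; the orbits are therefore disjoint, and for these candidates the verification you propose cannot succeed. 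Nothing in your case split excludes them, and indeed they cannot be excluded, because they survive the depth hypothesis: in case (i) with $m=2$, $p=1$, so $G = SL_{7}(\CC)$ and $s = s_{\alpha_{2}}s_{\alpha_{5}}\cdot s_{\alpha_{1}}s_{\alpha_{3}+\alpha_{4}}s_{\alpha_{6}}$ is the $6$-cycle $(1\,3\,6\,7\,5\,2)$ fixing the index $4$, one has $\gamma' = \alpha_{2}+\alpha_{3}+\alpha_{4}+\alpha_{5} = (\alpha_{2}+\alpha_{3})+(\alpha_{4}+\alpha_{5})$, and applying $s$ once to this decomposition gives $\eta = \alpha_{1}+\cdots+\alpha_{6}$, $\eta_{1} = \alpha_{1}+\alpha_{2}+\alpha_{3}$, $\eta_{2} = \alpha_{4}+\alpha_{5}+\alpha_{6}$, with $\eta, \eta_{1} \in \ol{\Delta}_{K}^{2}$ and $\eta_{2} \in (\ol{\Delta}_{K})_{+}$, while the $\langle s\rangle$-orbit of $\eta_{2}$ consists of roots whose row or column equals $4$ and so meets neither $\mathcal{O}_{1}$ nor $\mathcal{O}_{2}$. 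So the plan as written cannot be completed: either these fixed-block candidates must be dealt with by adjoining the orbits through the fixed block to the target set $\mathcal{O}_{1}\cup\mathcal{O}_{2}$, or the statement must be reinterpreted; your proposal supplies neither, and simply running the bookkeeping you describe would end in the failed check above. Be aware that the paper's own proof does not dispose of this case either ($\gamma'$ does lie in $s_{1}\Delta_{s_{2}}$, so the asserted membership does not exclude it, and the case-by-case check is never spelled out), so matching the paper's level of detail is not sufficient here; a complete write-up has to confront this case explicitly.
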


\begin{proof}
The requirement that $\eta, \eta_{1} \in \ol{\Delta}_{K}^{f}$ implies $s^{-(f-1)}\eta, s^{-(f-1)}\eta_{1} \in \Delta_{s^{-1}}$ which forces $s^{-(f-1)}\eta \in s_{1}\Delta_{s_{2}}$. It now follows that $\eta_{2} \in \mathcal{O}_{1}\cup\mathcal{O}_{2}$. This can be checked on a case-by-case basis.
\end{proof}

A consequence of the above lemma is that, for each $\eta \in \ol{\Delta}_{K}^{2}$, one has $\ol{\mathscr{X}_{\eta}((n')_{2}^{(2)})} = \{ X_{\eta}(B_{\eta}^{2,2}) \}$. Indeed, it is clear that $\ol{\mathscr{X}_{\eta}((n')_{2}^{(2)})} = \ol{\mathscr{X}_{\eta}((n_{2}^{(2)}))}$ and moreover following the discussion in \ref{sect: 4.16} the $\eta$-one-parameter subgroups that appear as as result of the rearrangement $n'_{2}v(1) = v(2,2)n_{2}^{(2)}$ are precisely those given by $X_{\eta}(c'_{s^{-1}\eta_{1}}c_{\eta_{2}})$ for each $(\eta_{1},\eta_{2}) \in \mathcal{P}_{\eta}^{2}$ where $n'_{2} = \prod_{\eta \in \ol{\Delta}_{K}^{2}} X_{\eta}(c'_{s^{-1}\eta})$ and $c'_{s^{-1}\eta}$ is given in \ref{sect: 4.13}.

\begin{lemma} \label{lemma: 5.7.2}
For all $2 \leq q \leq f \leq k$ and $\eta \in \bigcup_{i=f}^{k} \ol{\Delta}_{K}^{i}$, if $\ol{\mathscr{X}_{\eta}(v'(q,f-1))} = \{ X_{\eta}(B_{\eta}^{q,f-1}) \}$ then $\ol{\mathscr{X}_{\eta}(v'(q,f)(n')_{f}^{(q)})} = \{ X_{\eta}(B_{\eta}^{q,f}) \}$.
\end{lemma}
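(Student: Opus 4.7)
The plan is to proceed by induction on $|\mathcal{P}_\eta^{q,f}|$, unpacking the auxiliary rearrangement that takes $v(q,f-1)$ to $v'(q,f)(n')_f^{(q)}$ and identifying all $\eta$-one-parameter subgroups that appear in the latter presentation. As group elements, $v(q,f-1) = v'(q,f)(n')_f^{(q)}$, and the task is to understand how the presentation changes: the $(n')_f^{(q)}$ factor collects the $X_{\eta_i}(c_{\eta_i})$ with $\eta_i \in \mathscr{X}_f(v(q,f-1)) \setminus \mathscr{X}_f(v(q-1))$ together with the by-products $x_{\eta_i}$ produced by the BCH formula when each $X_{\eta_i}$ is moved rightwards past the parts of $v(q,f-1)$ still to its right.

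If $|\mathcal{P}_\eta^f| = 0$, then by Lemma \ref{lemma:4.3a}(i) no new $\eta$-one-parameter subgroup can be created during the auxiliary rearrangement at level $f$, for any such creation would come from a pair in $\mathcal{P}_\eta^f$. Consequently the $\eta$-subgroups present in $v'(q,f)(n')_f^{(q)}$ biject with those already in $v'(q,f-1)$, and their combined coefficient is $B_\eta^{q,f-1} = B_\eta^{q,f}$ by hypothesis and the recursive definition.

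When $|\mathcal{P}_\eta^f| > 0$ the $\eta$-subgroups in $v'(q,f)(n')_f^{(q)}$ split into three kinds: (a) those inherited unchanged from $v'(q,f-1)$, contributing $B_\eta^{q,f-1}$ by the hypothesis; (b) those produced as a BCH by-product when some $X_{\eta_1}(c_{\eta_1})$ with $\eta_1 \in \ol{\Delta}_K^f$ is moved past an already-processed partner $X_{\eta_2}$ whose coefficient has been updated to $B_{\eta_2}^{q,f}$; and (c) those produced when such an $X_{\eta_1}$ is moved past a not-yet-processed partner $X_{\eta_2}$ still carrying the coefficient $B_{\eta_2}^{q,f-1}$. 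By the observation in \ref{sect: 4.16}, each creation event contributes the product of the two coefficients, and the signs coming from the $\theta$-factors discussed in \ref{sect: 4.13} cancel uniformly under the normalization in which the $B$'s are defined.

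I expect the main obstacle to be verifying that the partition into types (b) and (c) lines up exactly with $\mathcal{R}_\eta^f$ and $\mathcal{C}_\eta^f$ respectively. The argument rests on the total order $\preceq$ of \ref{sect: 4.10}: at a fixed level $f$, row-type roots precede column-type roots, so when processing the $X_{\eta_i}$ in the order prescribed by the auxiliary rearrangement of Section~5.6 (largest $\preceq$ first), a column-type $\eta_1 \in \textrm{Col}(\eta)$ is moved \emph{before} its row-type partner in $\textrm{Row}(\eta)$ has been updated, yielding case (c) and the sum over $\mathcal{C}_\eta^f$; whereas a row-type $\eta_1 \in \textrm{Row}(\eta)$ is moved only \emph{after} the matching column-type partner has been processed, yielding case (b) and the sum over $\mathcal{R}_\eta^f$. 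Lemmas \ref{lemma:4.3a}, \ref{lemma:4.3b} and \ref{lemma:4.3c} ensure that no spurious $\eta$-subgroups arise from interactions among subgroups of the same level $f$ or from doubly-nested BCH collisions (in particular that the rearrangement of the $x_{\eta_i}$ themselves does not generate further level-$\geq f$ contributions to $\eta$). Applying the inductive hypothesis on $|\mathcal{P}_\eta^{q,f}|$ to rewrite the partner coefficients as $B_{\eta_1}^{q,f}$ and either $B_{\eta_2}^{q,f-1}$ or $B_{\eta_2}^{q,f}$, then summing the three contributions (a)--(c), yields exactly the recursive formula defining $B_\eta^{q,f}$ and completes the argument.
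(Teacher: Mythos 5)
Your proposal is correct and takes essentially the same route as the paper: induction on the number of level-$f$ decompositions of $\eta$ (the paper inducts on $\vert \mathcal{P}_{\eta}^{f} \vert$), with the base case being the absence of creation events and the inductive step resting on exactly the ordering fact you single out as the main obstacle --- under $\preceq$ the $\text{Col}(\eta)$-type roots are processed before the $\text{Row}(\eta)$-type ones, so $\mathcal{C}_{\eta}^{f}$-pairs meet the not-yet-updated coefficient $B_{\eta_{2}}^{q,f-1}$ while $\mathcal{R}_{\eta}^{f}$-pairs meet the updated $B_{\eta_{2}}^{q,f}$. The paper's proof consists precisely of verifying these order comparisons ($\eta_{3} \succeq \eta_{1}$ for contributions to $\eta_{1}$ and to $\mathcal{R}$-partners, $\eta_{3} \preceq \eta_{1}$ for contributions to $\mathcal{C}$-partners), so your sketch matches its two displayed cases.
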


\begin{proof}
This is a simple induction on $\vert \mathcal{P}_{\eta}^{f} \vert$. For all $\eta \in \bigcup_{i=f}^{k} \ol{\Delta}_{K}^{i}$ such that $\vert \mathcal{P}_{\eta}^{f} \vert = 0$ one clearly has $\ol{\mathscr{X}_{\eta}(v'(q,f)(n')_{f}^{(q)}))} = \ol{\mathscr{X}_{\eta}(v'(q,f-1))}$, that is, $B_{\eta}^{q,f} = B_{\eta}^{q,f-1}$. Consider the roots $\eta \in \bigcup_{i=f}^{k} \ol{\Delta}_{K}^{i}$ such that $\vert \mathcal{P}_{\eta}^{f} \vert > 0$ and suppose the claim is true for all $\eta' \in \bigcup_{i=f}^{k} \ol{\Delta}_{K}^{i}$ such that $\vert \mathcal{P}_{\eta'}^{f} \vert < \vert \mathcal{P}_{\eta}^{f} \vert$. Fix $\eta$ such that $\vert \mathcal{P}_{\eta}^{f} \vert > 0$ and let $(\eta_{1}, \eta_{2}) \in \mathcal{P}_{\eta}^{f}$. First, suppose $(\eta_{1}, \eta_{2}) \in \mathcal{C}_{\eta}^{f}$ then for all $(\eta_{3}, \eta_{4}) \in \mathcal{P}_{\eta_{1}}^{f}$ one has $\eta_{3} \succeq \eta_{1}$ thus $\ol{\mathscr{X}_{\eta_{1}}(v'(q,f))} \backslash \ol{\mathscr{X}_{\eta_{1}}(v'_{\eta_{1}}(q,f-1))} = \emptyset$. As $\vert \mathcal{P}_{\eta_{1}}^{f} \vert < \vert \mathcal{P}_{\eta}^{f} \vert$, it follows that $\ol{\mathscr{X}_{\eta_{1}}(v'_{\eta_{1}}(q,f-1))} = \{ X_{\eta_{1}}(B_{\eta_{1}}^{q,f}) \}$. For all $(\eta_{3}, \eta_{4}) \in \mathcal{P}_{\eta_{2}}^{f}$ one has $\eta_{3} \preceq \eta_{1}$ thus $\ol{\mathscr{X}_{\eta_{2}}(v'(q,f-1)_{\eta_{1}})} \backslash \ol{\mathscr{X}_{\eta_{2}}(v'_{\eta_{1}}(q,f-1))} = \emptyset$. It follows that $\ol{\mathscr{X}_{\eta_{2}}(v'(q,f-1)_{\eta_{1}})} = \{ X_{\eta_{2}}(B_{\eta_{2}}^{q,f-1}) \}$. \\
Suppose $(\eta_{1}, \eta_{2}) \in \mathcal{R}_{\eta}^{f}$, then for all $(\eta_{3}, \eta_{4}) \in \mathcal{P}_{\eta_{1}}^{f}$ one has $\eta_{3} \succeq \eta_{1}$ thus $\ol{\mathscr{X}_{\eta_{1}}(v'(q,f))} \backslash \ol{\mathscr{X}_{\eta_{1}}(v'_{\eta_{1}}(q,f-1))} = \emptyset$. As $\vert \mathcal{P}_{\eta_{1}}^{f} \vert < \vert \mathcal{P}_{\eta}^{f} \vert$, it follows that $\ol{\mathscr{X}_{\eta_{1}}(v'_{\eta_{1}}(q,f-1))} = \{ X_{\eta_{1}}(B_{\eta_{1}}^{q,f}) \}$. For all $(\eta_{3}, \eta_{4}) \in \mathcal{P}_{\eta_{2}}^{f}$ one has $\eta_{3} \succeq \eta_{1}$ thus $\ol{\mathscr{X}_{\eta_{2}}(v'(q,f))} \backslash \ol{\mathscr{X}_{\eta_{2}}(v'_{\eta_{1}}(q,f-1))} = \emptyset$ and therefore $\ol{\mathscr{X}(v'(q,f-1)_{\eta_{1}})} = \{ X_{\eta_{2}}(B_{\eta_{2}}^{q,f}) \}$. The result now follows.
\end{proof}

By what was said following Lemma \ref{lemma: 5.7.1} and by Lemma \ref{lemma: 5.7.2} above it follows that, for all $2 \leq q \leq f \leq k$ and all $\eta \in \ol{\Delta}_{K}^{f}$, one has
$$
\ol{\mathscr{X}_{\eta}((n')_{f}^{(q)})} = \mathscr{X}_{\eta}((n')_{f}^{(q)}) = \{ X_{\eta}(B_{\eta}^{q,f}) \}
$$
and thus
$$
\ol{\mathscr{X}_{\eta}(n'(k))} = \{ X_{\eta}( \sum _{q = 2}^{d(\eta)} B_{\eta}^{q,d(\eta)}) \}
$$
where $n'(k) = ((n')_{k}^{(k)})((n')_{k}^{(k-1)}(n')_{k-1}^{(k-1)}) \cdots ((n')_{k}^{(2)} \cdots (n')_{2}^{(2)})$. \\

\subsection{}
Suppose $\ol{\mathscr{X}_{\eta}(n_{f}^{(q)})} = \{ X_{\eta}(C_{\eta}^{q,f}) \}$. Let $b_{\eta_{1}} \cdots b_{\eta_{t}}$ be a summand in the expression $B_{\eta}^{q,f}$ and let $\eta_{i_{1}}, \ldots, \eta_{i_{r}}$ be the roots such that $d(\eta_{i_{j}}) \geq d(\eta)$, for each $1 \leq j \leq r$. Associate to this summand the sequence $d(\eta_{i_{1}}), \ldots, d(\eta_{i_{r}})$. \\
Observe that for any $\eta \in \ol{\Delta}_{K}^{f}$ such that $\eta = \eta_{1} + \eta_{2}$, for some $\eta_{1}, \eta_{2} \in (\ol{\Delta}_{K})_{+}$, one must have, without loss of generality, $\eta_{1} \in \bigcup_{i=1}^{f}\ol{\Delta}_{K}^{i}$ and $\eta_{2} \in \bigcup_{i=f}^{D+1}\ol{\Delta}_{K}^{i}$. It follows that, for $d(\eta_{2}) \neq d(\eta)$, one has $\eta \succeq \eta_{2}$ and, for $d(\eta_{2}) = d(\eta)$, one has $\eta \preceq \eta_{2}$. Moreover, it follows that, for $d(\eta_{2}) \neq d(\eta)$, any $\eta$-one-parameter subgroup that appears in $v(q,f)$ will appear to the left of $X_{\eta}(c_{\eta}) \in \mathscr{X}(v(q,f))$ and, for $d(\eta_{2}) = d(\eta)$, any $\eta$-one-parameter subgroup that appears in $v(q,f)$ will appear to the right of $X_{\eta}(c_{\eta}) \in \mathscr{X}(v(q,f))$. Thus, the expression $C_{\eta}^{q,f}$ is obtained from $B_{\eta}^{q,f}$ by considering only the summands $b_{\eta_{1}} \cdots b_{\eta_{t}}$ in $B_{\eta}^{q,f}$ such that $d(\eta_{i_{1}}) \leq \cdots \leq d(\eta_{i_{r}})$. \\
Define, for each $\kappa \in \ol{\Delta}_{K}^{k}$, the expression
$$
C_{\kappa} = \left\{\begin{array}{ll}
\begin{array}{l} \sum_{q=2}^{k}C_{\kappa}^{q,k},\end{array} & \text{if $\kappa \in \ol{\Delta}_{K}^{k}\cap(\Delta^{C}\cup\Delta^{R})$}\\

\begin{array}{l} \sum_{q=2}^{k}C_{\kappa}^{q,k} + \sum_{(\eta_{1},\eta_{2}) \in \mathcal{P}_{\kappa}}\sum_{q=2}^{d(\eta_{1})-1}\Big(C_{\eta_{2}}^{q,k}\Big(\sum_{q'=q+1}^{d(\eta_{1})} C_{\eta_{1}}^{q',d(\eta_{1})}\Big)\Big),\end{array} & \text{if $\kappa \in \ol{\Delta}_{K}^{k}\cap(\Delta_{1}^{O}\cup\Delta_{2}^{O})$}\\

\begin{array}{l} \sum_{q=2}^{k}C_{\kappa}^{q,k} + \sum_{(\eta_{1},\eta_{2}) \in \mathcal{P}_{\kappa}}\sum_{q=2}^{d(\eta_{1})-1}\Big(C_{\eta_{2}}^{q,k}\Big(\sum_{q'=q+1}^{d(\eta_{1})} C_{\eta_{1}}^{q',d(\eta_{1})}\Big)\Big) \\
+ \sum_{(\eta_{1},\eta_{2}) \in \mathcal{P}'_{\kappa}} \Big(\sum_{q=2}^{k}C^{q,k}_{\eta_{1}}\Big( c_{\eta_{2}} + \sum_{q'=q+1}^{k} C_{\eta_{2}}^{q',k} \Big)\Big),\end{array} & \text{if $\kappa \in \ol{\Delta}_{K}^{k}\cap\Delta_{3}^{O}$}
\end{array}\right.
$$
in the algebra $\CC[c_{\alpha}, c'_{\alpha} : \alpha \in (\ol{\Delta}_{K})_{+}]$. We can now state the main result of this article.

\begin{proposition}\label{prop: 5.8}
There is an isomorphism of algebras $\CC[NZ's^{-1}N]^{N} \simeq \CC[C_{\kappa} : \kappa \in \Delta_{s}]$. 
\end{proposition}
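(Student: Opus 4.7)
The plan is to leverage the Kostant cross-section isomorphism
$$
\nu^* \circ \pi^* : \mathbb{C}[\Sigma'_s] \xrightarrow{\sim} \mathbb{C}[NZ's^{-1}N]^N
$$
from Section 4.9 (itself a consequence of Theorem \ref{prop:3.2a}). Under this pullback, a function $f$ on $\Sigma'_s = N_s Z' s^{-1}$ corresponds to the $N$-invariant function $u_s z' s^{-1} u \mapsto f(n_s z' s^{-1})$, where $n_s \in N_s$ is determined from $(u_s, z', u)$ by the recursion (\ref{eq:4.1g}). It therefore suffices to show that, for each $\kappa \in \Delta_s$, the pullback of the $\kappa$-coordinate function $\ol{c}_\kappa$ on $N_s$ coincides with $C_\kappa$ when both are viewed as elements of $\mathbb{C}[c_\alpha, c'_\alpha : \alpha \in (\ol{\Delta}_K)_+]$, and then to deduce that the resulting $C_\kappa$ are algebraically independent generators of the image.

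The bulk of the work is to track the iterative rearrangement of Section 5 and read off $\ol{c}_\kappa$ explicitly. Starting from the product $n'_k \cdots n'_2 v(1)$ in (\ref{eq:4.3a}), the rule (\ref{star}) produces the factorization $n'_k \cdots n'_2 v(1) = v(k) n(k)$, with $n(k)$ further decomposed as in (\ref{eq:5.1a}). The combinatorial lemmas \ref{lemma:4.3a}--\ref{lemma:4.3c} ensure that no spurious one-parameter subgroups of the same degree appear under the successive moves, so the algorithm terminates cleanly. The induction of Lemma \ref{lemma: 5.7.2} on $|\mathcal{P}_\eta^f|$, with base case furnished by Lemma \ref{lemma: 5.7.1}, then yields the key identity $\ol{\mathscr{X}_\eta((n')_f^{(q)})} = \{ X_\eta(B_\eta^{q,f}) \}$ for the coefficient of each $\eta$-one-parameter subgroup.

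Passing from $B_\eta^{q,f}$ to $C_\eta^{q,f}$ amounts to retaining only those monomials whose indexing roots appear in non-decreasing degree order (as described in Section 5.8), reflecting which rearrangement-induced terms actually survive the final collection into $n_k$. Summing over the stages $(q,f)$ and adding the cross-contributions from $\mathcal{P}_\kappa$ (for $\kappa \in \Delta_1^O \cup \Delta_2^O \cup \Delta_3^O$) and $\mathcal{P}'_\kappa$ (only for $\kappa \in \Delta_3^O$) gives precisely the three-case formula for $C_\kappa$, the case split being forced by the structure recorded in Lemmas \ref{lemma:4.1a}--\ref{lemma:4.1e}. Algebraic independence of $\{C_\kappa\}_{\kappa \in \Delta_s}$ follows from the fact that each $C_\kappa$ contains a distinguished linear term (essentially $c'_{s^{-1}\kappa}$, coming from the $B_\kappa^{q,q}$ contribution) that does not occur in any other $C_{\kappa'}$, giving a triangular structure with respect to the ordering $\preceq$.

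The main obstacle is the case analysis underlying the definition of $C_\kappa$: one must verify that its three branches exhaust \emph{all} sources of $\kappa$-one-parameter subgroups in the rearrangement, and that no further cross-terms arise from roots $\kappa$ admitting a less standard decomposition. This relies on Lemma \ref{lemma:4.1e}, which pins down the two orbits $\mathcal{O}_1, \mathcal{O}_2$ supporting nontrivial decompositions $\kappa = \eta_1 + \eta_2$ with both $\eta_i \in \ol{\Delta}_K^k$, and on the observation that the orbit $\mathcal{O}_{\gamma'}$ lies in $\Delta_3^O$, explaining why the extra $\mathcal{P}'_\kappa$ term appears precisely in that case. The remaining subtlety is ensuring that the rearrangement bookkeeping respects both the ordering $\preceq$ \emph{and} the constraint $d(\eta_{i_1}) \leq \cdots \leq d(\eta_{i_r})$ identifying exactly which summands of $B_\eta^{q,f}$ pass to $C_\eta^{q,f}$.
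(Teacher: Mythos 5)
Your proposal is correct and follows essentially the same route as the paper's own proof: the reduction via the cross-section isomorphism $\nu^{*}\circ\pi^{*}$ to the identity $\ol{c}_{\kappa}=C_{\kappa}$, the rearrangement bookkeeping through Lemmas \ref{lemma:4.3a}--\ref{lemma:4.3c} and \ref{lemma: 5.7.1}--\ref{lemma: 5.7.2}, the passage from $B_{\eta}^{q,f}$ to $C_{\eta}^{q,f}$ by the degree-ordering constraint, and the same three-case split ($\Delta^{C}\cup\Delta^{R}$, $\Delta_{1}^{O}\cup\Delta_{2}^{O}$, $\Delta_{3}^{O}$) governing the $\mathcal{P}_{\kappa}$ and $\mathcal{P}'_{\kappa}$ contributions. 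The only addition is your explicit algebraic-independence argument via the distinguished linear terms $c'_{s^{-1}\kappa}$, a point the paper's proof leaves implicit; this is a harmless (indeed welcome) supplement rather than a divergence.
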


\begin{proof}
The statement of the lemma is proven if one can show $\ol{c}_{\kappa} = C_{\kappa}$, for each $\kappa \in \ol{\Delta}_{K}^{k}$, where $\ol{c}_{\kappa}$ was defined at the end of \ref{sect: 5.4}. Let $\kappa \in \ol{\Delta}_{K}^{k}\cap(\Delta^{C}\cup\Delta^{R})$ and consider the rearrangements
\begin{equation} \label{eq:4.3c}
v(k)n(k)= v(k)n'n'' = v''\ol{n}_{k}n''
\end{equation}
outlined in \ref{sect: 5.4}. By Lemma \ref{lemma:4.3a} (ii) and (iii) one has 
$$
\ol{\mathscr{X}_{\kappa}(v(k)n'n'')} \backslash \ol{\mathscr{X}_{\kappa}(v(k)n(k))} = \ol{\mathscr{X}_{\kappa}(v''\ol{n}_{k}n'')} \backslash \ol{\mathscr{X}_{\kappa}(v(k)n'n'')} = \emptyset
$$
thus, for such $\kappa$, it follows that
$$
\ol{c}_{\kappa} = \sum_{q=2}^{k}C_{\kappa}^{q,k} = C_{\kappa}.
$$

Let $\kappa \in \ol{\Delta}_{K}^{k}\cap\Delta^{O}$ and consider the rearrangement $v(k)n(k) = v(k)n'n''$. It follows by Lemma \ref{lemma:4.3b} and  Lemma \ref{lemma:4.3c} that
$$
\ol{\mathscr{X}_{\kappa}(n')} \backslash \ol{\mathscr{X}_{\kappa}(n(k))} = \{ X_{\kappa}(\sum_{(\eta_{1},\eta_{2}) \in \mathcal{P}_{\kappa}}\sum_{q=2}^{d(\eta_{1})-1}\Big(C_{\eta_{2}}^{q,k}\Big(\sum_{q'=q+1}^{d(\eta_{1})} C_{\eta_{1}}^{q',d(\eta_{1})}\Big)\Big)) \}.
$$
Next, consider the rearrangement $v(k)n'n'' = v''\ol{n}_{k}n''$. It follows immediately from Lemma \ref{lemma:4.3a} (iii) that if $\kappa \in \Delta_{1}^{O}\cup\Delta_{2}^{O}$ then $\ol{\mathscr{X}_{\kappa}(v''\ol{n}_{k}n'')} \backslash \ol{\mathscr{X}_{\kappa}(v(k)n'n'')} = \emptyset$ and therefore
$$
\ol{c}_{\kappa} = \sum_{q=2}^{k}C_{\kappa}^{q,k} + \sum_{(\eta_{1},\eta_{2}) \in \mathcal{P}_{\kappa}}\sum_{q=2}^{d(\eta_{1})-1}\Big(C_{\eta_{2}}^{q,k}\Big(\sum_{q'=q+1}^{d(\eta_{1})} C_{\eta_{1}}^{q',d(\eta_{1})}\Big)\Big) = C_{\kappa}.
$$

Finally, if $\kappa \in \Delta_{3}^{O}$ and considering the rearrangement $v(k)n'n'' = v''\ol{n}_{k}n''$ one has
$$
\ol{\mathscr{X}_{\kappa}(v''\ol{n}_{k}n'')} \backslash \ol{\mathscr{X}_{\kappa}(v(k)n'n'')} = \{ X_{\kappa}(\sum_{(\eta_{1},\eta_{2}) \in \mathcal{P}'_{\kappa}} \Big(\sum_{q=2}^{k}C^{q,k}_{\eta_{1}}\Big( c_{\eta_{2}} + \sum_{q'=q+1}^{k} C_{\eta_{2}}^{q',k} \Big)\Big)) \}.
$$
We therefore conclude that
\begin{IEEEeqnarray*}{rCl}
\ol{c}_{\kappa} &=& \sum_{q=2}^{k}C_{\kappa}^{q,k} + \sum_{(\eta_{1},\eta_{2}) \in \mathcal{P}_{\kappa}}\sum_{q=2}^{d(\eta_{1})-1}\Big(C_{\eta_{2}}^{q,k}\Big(\sum_{q'=q+1}^{d(\eta_{1})} C_{\eta_{1}}^{q',d(\eta_{1})}\Big)\Big) \\
&&+ \sum_{(\eta_{1},\eta_{2}) \in \mathcal{P}'_{\kappa}} \Big(\sum_{q=2}^{k}C^{q,k}_{\eta_{1}}\Big( c_{\eta_{2}} + \sum_{q'=q+1}^{k} C_{\eta_{2}}^{q',k} \Big)\Big) \\ 
&=& C_{\kappa}.
\end{IEEEeqnarray*}
\end{proof}
\hfill

\section{Appendix}
Let $\alpha \in (\ol{\Delta}_{K})_{+}$ and throughout this section write $\alpha = \alpha_{i} + \cdots + \alpha_{j}$. It is clear that 
$$
\text{Row}(\alpha)\times\text{Col}(\alpha) = \{ (\alpha_{i} + \cdots + \alpha_{k}, \alpha_{k+1} + \cdots + \alpha_{j}) : i \leq k \leq j \}
$$
However, the value of $d(\alpha)$, for all $\alpha \in (\ol{\Delta}_{K})_{+}$, is needed in order to write the expressions $C^{q,f}_{\alpha}$, for each $\alpha \in (\ol{\Delta}_{K})_{+}$. It is therefore the purpose of this appendix to explicitly describe $d(\ol{\Delta}_{K})_{+}$ for each of the four cases in Proposition \ref{prop:4.1a}. \\
Define the function $\mathscr{D} : \ZZ \times \ZZ \rightarrow \ZZ$ by $\mathscr{D}(a,b) = \lfloor \frac{1}{2}(b-a) \rfloor$. \\

\subsection{} Case (i). For $i \in \{ 1,3,5, \ldots, m+1 \}$ one has
$$\arraycolsep=1.4pt\def\arraystretch{1.2}
d(\alpha) = \left\{\begin{array}{ll}
\mathscr{D}(-\frac{1}{2}i, 2m-\frac{1}{2}j+3), & \text{if $j \in \{ i, i+2, \ldots, m-1 \}$} \\
\mathscr{D}(-\frac{1}{2}i,2m-\frac{1}{2}(j-p)+3), & \text{if $j \in \{ m+p+1, m+p+3, \ldots, m+p+m+1 \}$} \\
\mathscr{D}(-\frac{1}{2}i,2m-l+\frac{1}{2}j+5), & \text{if $j \in \{ m+p+m, m+p+m-2, \ldots, m+p+2 \}$} \\
\mathscr{D}(-\frac{1}{2}i,2m-l+\frac{1}{2}(j+p)+5), & \text{if $j \in \{ m,m-2, \ldots, i+1 \}$}
\end{array}\right.
$$
and for $i \in \{ m+p+3, m+p+5, \ldots, m+p+m+1 \}$ one has
$$\arraycolsep=1.4pt\def\arraystretch{1.2}
d(\alpha) = \left\{\begin{array}{ll}
\mathscr{D}(-\frac{1}{2}i, 2m-\frac{1}{2}j+3), & \text{if $j \in \{ i, i+2, \ldots, m+p+m+1 \}$} \\
\mathscr{D}(-\frac{1}{2}i, 2m-l+\frac{1}{2}(j+5)), & \text{if $j \in \{ m+p+m, m+p+m-2, \ldots, i+1 \}$}.
\end{array}\right.
$$
For $i \in \{ 2, 4, 6, \ldots, m \}$ one has 
$$\arraycolsep=1.4pt\def\arraystretch{1.2}
d(\alpha) = \left\{\begin{array}{ll}
\mathscr{D}(\frac{1}{2}i, \frac{1}{2}j+1), & \text{if $j \in \{ i, i+2, \ldots, m \}$} \\
\mathscr{D}(\frac{1}{2}i, \frac{1}{2}(j-p)+1), & \text{if $j \in \{ m+p+2, m+p+4, \ldots, m+p+m \}$} \\
\mathscr{D}(\frac{1}{2}i, l-\frac{1}{2}j), & \text{if $j \in \{ m+p+m+1, m+p+m-1, \ldots, m+p+1 \}$} \\
\mathscr{D}(\frac{1}{2}i, l-\frac{1}{2}(j+p)), & \text{if $j \in \{ m-1,m-3, \ldots, i+1 \}$}
\end{array}\right.
$$
and for $i \in \{ m+p+2, m+p+4, \ldots, m+p+m \}$ one has
$$\arraycolsep=1.4pt\def\arraystretch{1.2}
d(\alpha) = \left\{\begin{array}{ll}
\mathscr{D}(\frac{1}{2}i, \frac{1}{2}j+1), & \text{if $j \in \{ i, i+2, \ldots, m+p+m \}$} \\
\mathscr{D}(\frac{1}{2}i, l-\frac{1}{2}(j-5)), & \text{if $j \in \{ m+p+m+1, m+p+m-1, \ldots, i+1 \}$}.
\end{array}\right.
$$
This describes the image $d(\Delta^{O})$. For $m+2 \leq i \leq m+p+1$ one has
$$\arraycolsep=1.4pt\def\arraystretch{1.2}
d(\alpha) = \left\{\begin{array}{ll}
\mathscr{D}(m+p-1,j), & \text{if $j \in \{ m+p+2, m+p+4, \ldots, m+p+m \}$} \\
\mathscr{D}(j,m+l+3), & \text{if $j \in \{ m+p+m+1, m+p+m-1, \ldots, m+p+1 \}$}.
\end{array}\right.
$$
This describes the image $d(\Delta^{C})$. For $m+1 \leq j \leq m+p$ one has
$$\arraycolsep=1.4pt\def\arraystretch{1.2}
d(\alpha) = \left\{\begin{array}{ll}
\mathscr{D}(i,m+3), & \text{if $i \in \{ m, m-2, \ldots, 2 \}$} \\
\mathscr{D}(-i,m+2), & \text{if $i \in \{ 1, 3, \ldots, m+1 \}$}.
\end{array}\right.
$$
This describes the image $d(\Delta^{R})$. \\

\subsection{} Case (ii). For $i \in \{ 1,3,5, \ldots, m+1 \}$ one has
$$\arraycolsep=1.4pt\def\arraystretch{1.2}
d(\alpha) = \left\{\begin{array}{ll}
\mathscr{D}(-\frac{1}{2}i, 2m-\frac{1}{2}j+4), & \text{if $j \in \{ i, i+2, \ldots, m-1 \}$} \\
\mathscr{D}(-\frac{1}{2}i,2m-\frac{1}{2}(j-p)+4), & \text{if $j \in \{ m+p+1, m+p+3, \ldots, m+p+m+1 \}$} \\
\mathscr{D}(-\frac{1}{2}i,2m-l+\frac{1}{2}j+6), & \text{if $j \in \{ m+p+m+2, m+p+m, \ldots, m+p+2 \}$} \\
\mathscr{D}(-\frac{1}{2}i,2m-l+\frac{1}{2}(j+p)+6), & \text{if $j \in \{ m,m-2, \ldots, i+1 \}$}
\end{array}\right.
$$
and for $i \in \{ m+p+3, m+p+5, \ldots, m+p+m+1 \}$ one has
$$\arraycolsep=1.4pt\def\arraystretch{1.2}
d(\alpha) = \left\{\begin{array}{ll}
\mathscr{D}(-\frac{1}{2}i, 2m-\frac{1}{2}j+4), & \text{if $j \in \{ i, i+2, \ldots, m+p+m+1 \}$} \\
\mathscr{D}(-\frac{1}{2}i, 2m-l+\frac{1}{2}(j+7)), & \text{if $j \in \{ m+p+m+2, m+p+m, \ldots, i+1 \}$}.
\end{array}\right.
$$
For $i \in \{ 2, 4, 6, \ldots, m \}$ one has 
$$\arraycolsep=1.4pt\def\arraystretch{1.2}
d(\alpha) = \left\{\begin{array}{ll}
\mathscr{D}(\frac{1}{2}i, \frac{1}{2}j+1), & \text{if $j \in \{ i, i+2, \ldots, m \}$} \\
\mathscr{D}(\frac{1}{2}i, \frac{1}{2}(j-p)+1), & \text{if $j \in \{ m+p+2, m+p+4, \ldots, m+p+m+2 \}$} \\
\mathscr{D}(\frac{1}{2}i, l-\frac{1}{2}j), & \text{if $j \in \{ m+p+m+1, m+p+m-1, \ldots, m+p+1 \}$} \\
\mathscr{D}(\frac{1}{2}i, l-\frac{1}{2}(j+p)), & \text{if $j \in \{ m-1,m-3, \ldots, i+1 \}$}
\end{array}\right.
$$
and for $i \in \{ m+p+2, m+p+4, \ldots, m+p+m+2 \}$ one has
$$\arraycolsep=1.4pt\def\arraystretch{1.2}
d(\alpha) = \left\{\begin{array}{ll}
\mathscr{D}(\frac{1}{2}i, \frac{1}{2}j+1), & \text{if $j \in \{ i, i+2, \ldots, m+p+m+2 \}$} \\
\mathscr{D}(\frac{1}{2}i, l-\frac{1}{2}(j-5)), & \text{if $j \in \{ m+p+m+1, m+p+m-1, \ldots, i+1 \}$}.
\end{array}\right.
$$
This describes the image $d(\Delta^{O})$. For $m+2 \leq i \leq m+p+1$ one has
$$\arraycolsep=1.4pt\def\arraystretch{1.2}
d(\alpha) = \left\{\begin{array}{ll}
\mathscr{D}(m+p-1,j), & \text{if $j \in \{ m+p+2, m+p+4, \ldots, m+p+m+2 \}$} \\
\mathscr{D}(j,m+l+2), & \text{if $j \in \{ m+p+m+1, m+p+m-1, \ldots, m+p+1 \}$}.
\end{array}\right.
$$
This describes the image $d(\Delta^{C})$. For $m+1 \leq j \leq m+p$ one has
$$\arraycolsep=1.4pt\def\arraystretch{1.2}
d(\alpha) = \left\{\begin{array}{ll}
\mathscr{D}(i,m+3), & \text{if $i \in \{ m, m-2, \ldots, 2 \}$} \\
\mathscr{D}(-i,m+2), & \text{if $i \in \{ 1, 3, \ldots, m+1 \}$}.
\end{array}\right.
$$
This describes the image $d(\Delta^{R})$.

\subsection{} Case (iii). For $i \in \{ 2, 4, 6, \ldots, m+1 \}$ one has
$$\arraycolsep=1.4pt\def\arraystretch{1.2}
d(\alpha) = \left\{\begin{array}{ll}
\mathscr{D}(-\frac{1}{2}i, 2m-\frac{1}{2}j+3), & \text{if $j \in \{ i, i+2, \ldots, m-1 \}$} \\
\mathscr{D}(-\frac{1}{2}i,2m-\frac{1}{2}(j-p)+3), & \text{if $j \in \{ m+p+1, m+p+3, \ldots, m+p+m \}$} \\
\mathscr{D}(-\frac{1}{2}i,2m-l+\frac{1}{2}j+5), & \text{if $j \in \{ m+p+m+1, m+p+m-1, \ldots, m+p+2 \}$} \\
\mathscr{D}(-\frac{1}{2}i,2m-l+\frac{1}{2}(j+p)+5), & \text{if $j \in \{ m,m-2, \ldots, i+1 \}$}
\end{array}\right.
$$
and for $i \in \{ m+p+3, m+p+5, \ldots, m+p+m \}$ one has
$$\arraycolsep=1.4pt\def\arraystretch{1.2}
d(\alpha) = \left\{\begin{array}{ll}
\mathscr{D}(-\frac{1}{2}i, 2m-\frac{1}{2}j+3), & \text{if $j \in \{ i, i+2, \ldots, m+p+m+1 \}$} \\
\mathscr{D}(-\frac{1}{2}i, 2m-l+\frac{1}{2}(j+5)), & \text{if $j \in \{ m+p+m, m+p+m-2, \ldots, i+1 \}$}.
\end{array}\right.
$$
For $i \in \{ 1, 3, 5, \ldots, m \}$ one has 
$$\arraycolsep=1.4pt\def\arraystretch{1.2}
d(\alpha) = \left\{\begin{array}{ll}
\mathscr{D}(\frac{1}{2}i, \frac{1}{2}j+1), & \text{if $j \in \{ i, i+2, \ldots, m \}$} \\
\mathscr{D}(\frac{1}{2}i, \frac{1}{2}(j-p)+1), & \text{if $j \in \{ m+p+2, m+p+4, \ldots, m+p+m+1 \}$} \\
\mathscr{D}(\frac{1}{2}i, l-\frac{1}{2}j), & \text{if $j \in \{ m+p+m, m+p+m-2, \ldots, m+p+1 \}$} \\
\mathscr{D}(\frac{1}{2}i, l-\frac{1}{2}(j+p)), & \text{if $j \in \{ m-1,m-3, \ldots, i+1 \}$}
\end{array}\right.
$$
and for $i \in \{ m+p+2, m+p+4, \ldots, m+p+m+1 \}$ one has
$$\arraycolsep=1.4pt\def\arraystretch{1.2}
d(\alpha) = \left\{\begin{array}{ll}
\mathscr{D}(\frac{1}{2}i, \frac{1}{2}j+1), & \text{if $j \in \{ i, i+2, \ldots, m+p+m+1 \}$} \\
\mathscr{D}(\frac{1}{2}i, l-\frac{1}{2}(j-5)), & \text{if $j \in \{ m+p+m, m+p+m-2, \ldots, i+1 \}$}.
\end{array}\right.
$$
This describes the image $d(\Delta^{O})$. For $m+2 \leq i \leq m+p+1$ one has
$$\arraycolsep=1.4pt\def\arraystretch{1.2}
d(\alpha) = \left\{\begin{array}{ll}
\mathscr{D}(m+p-1,j), & \text{if $j \in \{ m+p+2, m+p+4, \ldots, m+p+m+1 \}$} \\
\mathscr{D}(j,m+l+3), & \text{if $j \in \{ m+p+m, m+p+m-2, \ldots, m+p+1 \}$}.
\end{array}\right.
$$
This describes the image $d(\Delta^{C})$. For $m+1 \leq j \leq m+p$ one has
$$\arraycolsep=1.4pt\def\arraystretch{1.2}
d(\alpha) = \left\{\begin{array}{ll}
\mathscr{D}(i,m+3), & \text{if $i \in \{ m, m-2, \ldots, 1 \}$} \\
\mathscr{D}(-i,m+2), & \text{if $i \in \{ 2, 4, \ldots, m+1 \}$}.
\end{array}\right.
$$
This describes the image $d(\Delta^{R})$. \\

\subsection{} Case (iv). For $i \in \{ 2,4,6, \ldots, m+1 \}$ one has
$$\arraycolsep=1.4pt\def\arraystretch{1.2}
d(\alpha) = \left\{\begin{array}{ll}
\mathscr{D}(-\frac{1}{2}i, 2m-\frac{1}{2}j+4), & \text{if $j \in \{ i, i+2, \ldots, m-1 \}$} \\
\mathscr{D}(-\frac{1}{2}i,2m-\frac{1}{2}(j-p)+4), & \text{if $j \in \{ m+p+1, m+p+3, \ldots, m+p+m+2 \}$} \\
\mathscr{D}(-\frac{1}{2}i,2m-l+\frac{1}{2}j+6), & \text{if $j \in \{ m+p+m+1, m+p+m-1, \ldots, m+p+2 \}$} \\
\mathscr{D}(-\frac{1}{2}i,2m-l+\frac{1}{2}(j+p)+6), & \text{if $j \in \{ m,m-2, \ldots, i+1 \}$}
\end{array}\right.
$$
and for $i \in \{ m+p+3, m+p+5, \ldots, m+p+m+2 \}$ one has
$$\arraycolsep=1.4pt\def\arraystretch{1.2}
d(\alpha) = \left\{\begin{array}{ll}
\mathscr{D}(-\frac{1}{2}i, 2m-\frac{1}{2}j+4), & \text{if $j \in \{ i, i+2, \ldots, m+p+m+2 \}$} \\
\mathscr{D}(-\frac{1}{2}i, 2m-l+\frac{1}{2}(j+7)), & \text{if $j \in \{ m+p+m+1, m+p+m-1, \ldots, i+1 \}$}.
\end{array}\right.
$$
For $i \in \{ 1, 3, 5, \ldots, m \}$ one has 
$$\arraycolsep=1.4pt\def\arraystretch{1.2}
d(\alpha) = \left\{\begin{array}{ll}
\mathscr{D}(\frac{1}{2}i, \frac{1}{2}j+1), & \text{if $j \in \{ i, i+2, \ldots, m \}$} \\
\mathscr{D}(\frac{1}{2}i, \frac{1}{2}(j-p)+1), & \text{if $j \in \{ m+p+2, m+p+4, \ldots, m+p+m+1 \}$} \\
\mathscr{D}(\frac{1}{2}i, l-\frac{1}{2}j), & \text{if $j \in \{ m+p+m+2, m+p+m, \ldots, m+p+1 \}$} \\
\mathscr{D}(\frac{1}{2}i, l-\frac{1}{2}(j+p)), & \text{if $j \in \{ m-1,m-3, \ldots, i+1 \}$}
\end{array}\right.
$$
and for $i \in \{ m+p+2, m+p+4, \ldots, m+p+m+1 \}$ one has
$$\arraycolsep=1.4pt\def\arraystretch{1.2}
d(\alpha) = \left\{\begin{array}{ll}
\mathscr{D}(\frac{1}{2}i, \frac{1}{2}j+1), & \text{if $j \in \{ i, i+2, \ldots, m+p+m+1 \}$} \\
\mathscr{D}(\frac{1}{2}i, l-\frac{1}{2}(j-5)), & \text{if $j \in \{ m+p+m+2, m+p+m, \ldots, i+1 \}$}.
\end{array}\right.
$$
This describes the image $d(\Delta^{O})$. For $m+2 \leq i \leq m+p+1$ one has
$$\arraycolsep=1.4pt\def\arraystretch{1.2}
d(\alpha) = \left\{\begin{array}{ll}
\mathscr{D}(m+p-1,j), & \text{if $j \in \{ m+p+2, m+p+4, \ldots, m+p+m+1 \}$} \\
\mathscr{D}(j,m+l+2), & \text{if $j \in \{ m+p+m+2, m+p+m, \ldots, m+p+1 \}$}.
\end{array}\right.
$$
This describes the image $d(\Delta^{C})$. For $m+1 \leq j \leq m+p$ one has
$$\arraycolsep=1.4pt\def\arraystretch{1.2}
d(\alpha) = \left\{\begin{array}{ll}
\mathscr{D}(i,m+3), & \text{if $i \in \{ m, m-2, \ldots, 1 \}$} \\
\mathscr{D}(-i,m+2), & \text{if $i \in \{ 2, 4, \ldots, m+1 \}$}.
\end{array}\right.
$$
This describes the image $d(\Delta^{R})$. \\

\begin{bibdiv}
\begin{biblist}
\bib{BK}{article}{
   author={Brundan, Jonathan},
   author={Kleshchev, Alexander},
   title={Shifted Yangians and finite $W$-algebras},
   journal={Adv. Math.},
   volume={200},
   date={2006},
   number={1},
   pages={136--195},
   issn={0001-8708},
   review={\MR{2199632}},
   doi={10.1016/j.aim.2004.11.004},
}

\bib{Car}{article}{
   author={Carter, R. W.},
   title={Conjugacy classes in the Weyl group},
   journal={Compositio Math.},
   volume={25},
   date={1972},
   pages={1--59},
   issn={0010-437X},
   review={\MR{0318337}},
}

\bib{Car2}{book}{
   author={Carter, R. W.},
   title={Simple groups of Lie type},
   note={Pure and Applied Mathematics, Vol. 28},
   publisher={John Wiley \& Sons, London-New York-Sydney},
   date={1972},
   pages={viii+331},
   review={\MR{0407163}},
}

\bib{GanGin}{article}{
   author={Gan, Wee Liang},
   author={Ginzburg, Victor},
   title={Quantization of Slodowy slices},
   journal={Int. Math. Res. Not.},
   date={2002},
   number={5},
   pages={243--255},
   issn={1073-7928},
   review={\MR{1876934}},
   doi={10.1155/S107379280210609X},
}

\bib{Kost}{article}{
   author={Kostant, Bertram},
   title={On Whittaker vectors and representation theory},
   journal={Invent. Math.},
   volume={48},
   date={1978},
   number={2},
   pages={101--184},
   issn={0020-9910},
   review={\MR{507800}},
   doi={10.1007/BF01390249},
}

\bib{Sev}{article}{
   author={Sevostyanov, A.},
   title={Algebraic group analogues of the Slodowy slices and deformations
   of Poisson $W$-algebras},
   journal={Int. Math. Res. Not. IMRN},
   date={2011},
   number={8},
   pages={1880--1925},
   issn={1073-7928},
   review={\MR{2806525}},
   doi={10.1093/imrn/rnq139},
}

\bib{Sev1}{article}{
   author={Sevostyanov, A.},
   title={Conjugacy classes in Weyl groups and q-W algebras},
   journal={Adv. Math.},
   volume={228},
   date={2011},
   number={3},
   pages={1315--1376},
   issn={0001-8708},
   review={\MR{2824557}},
   doi={10.1016/j.aim.2011.06.018},
}

\bib{Slo}{book}{
   author={Slodowy, Peter},
   title={Simple singularities and simple algebraic groups},
   series={Lecture Notes in Mathematics},
   volume={815},
   publisher={Springer, Berlin},
   date={1980},
   pages={x+175},
   isbn={3-540-10026-1},
   review={\MR{584445}},
}

\bib{Stein}{article}{
   author={Steinberg, Robert},
   title={Regular elements of semisimple algebraic groups},
   journal={Inst. Hautes \'Etudes Sci. Publ. Math.},
   number={25},
   date={1965},
   pages={49--80},
   issn={0073-8301},
   review={\MR{0180554}},
}	

\bib{Wang}{article}{
   author={Wang, Weiqiang},
   title={Nilpotent orbits and finite $W$-algebras},
   conference={
      title={Geometric representation theory and extended affine Lie
      algebras},
   },
   book={
      series={Fields Inst. Commun.},
      volume={59},
      publisher={Amer. Math. Soc., Providence, RI},
   },
   date={2011},
   pages={71--105},
   review={\MR{2777648}},
}
\end{biblist}
\end{bibdiv}

\end{document}